\documentclass{amsart}
\usepackage{graphicx, color}
\usepackage{amscd}
\usepackage{amsmath,empheq}
\usepackage{amsfonts}
\usepackage{amssymb}
\usepackage{mathrsfs}

\usepackage[all]{xy}

\newtheorem{theorem}{Theorem}

\newtheorem{lemma}[theorem]{Lemma}
\newtheorem{prop}[theorem]{Proposition}
\newtheorem{remark}{Remark}

\newtheorem{claim}{Claim}

\newtheorem{definition}[theorem]{Definition}

\newenvironment{proof-sketch}{\noindent{\bf Sketch of Proof}\hspace*{1em}}{\qed\bigskip}


\everymath{\displaystyle}

\newcommand{\RR}{\mathbb R}
\newcommand{\NN}{\mathbb N}

\newcommand{\ZZ}{\mathbb Z}

\renewcommand{\leq}{\leqslant}

\renewcommand{\geq}{\geqslant}

\baselineskip=16pt plus 1pt minus 1pt

\begin{document}

\title[Resonant Robin problems driven by the $p$-Laplacian plus]{Resonant Robin problems driven by the $p$-Laplacian plus an indefinite potential}

\author[N.S. Papageorgiou]{Nikolaos S. Papageorgiou}
\address[N.S. Papageorgiou]{National Technical University, Department of Mathematics,
				Zografou Campus, Athens 15780, Greece}
\email{\tt npapg@math.ntua.gr}

\author[V.D. R\u{a}dulescu]{Vicen\c{t}iu D. R\u{a}dulescu}
\address[V.D. R\u{a}dulescu]{Department of Mathematics, Faculty of Sciences, King Abdulaziz University,
P.O. Box 80203, Jeddah 21589, Saudi Arabia \&  Department of Mathematics, University of Craiova, Street A.I. Cuza 13,
          200585 Craiova, Romania}
\email{\tt vicentiu.radulescu@imar.ro}

\author[D.D. Repov\v{s}]{Du\v{s}an D. Repov\v{s}}
\address[D.D. Repov\v{s}]{Faculty of Education and Faculty of Mathematics and Physics,
University of Ljubljana, SI-1000 Ljubljana, Slovenia}\email{dusan.repovs@guest.arnes.si}

\keywords{$p$-Laplacian, indefinite potential, resonance, strong resonance, variational eigenvalue, nonlinear regularity, critical groups, Robin boundary condition.\\
\phantom{aa} 2010 AMS Subject Classification: 35J20, 35J60, 58E05}

\begin{abstract}
We consider a nonlinear Robin problems driven by the $p$-Laplacian plus an indefinite potential. The reaction is resonant with respect to a variational eigenvalue. For the principal eigenvalue we assume strong resonance. Using variational tools and critical groups we prove existence and multiplicity theorems.
\end{abstract}

\maketitle


\section{Introduction}

Let $\Omega\subseteq\RR^N$ be a bounded domain with a $C^2$-boundary $\partial\Omega$. In this paper, we study the following nonlinear Robin problem
\begin{equation}\label{eq1}
	\left\{\begin{array}{ll}
		-\Delta_p u(z)+\xi(z)|u(z)|^{p-2}u(z)=f(z,u(z))&\mbox{in}\ \Omega,\\
		\frac{\partial u}{\partial n_p}+\beta(z)|u|^{p-2}u=0&\mbox{on}\ \partial\Omega\,.
	\end{array}\right\}
\end{equation}

In this problem, $\Delta_p$ denotes the $p$-Laplace differential operator defined by
$$\Delta_pu={\rm div}\,(|Du|^{p-2}Du)\ \mbox{for all}\ u\in W^{1,p}(\Omega).$$

The potential function $\xi(\cdot)\in L^{\infty}(\Omega)$ is indefinite (that is, sign changing) and the reaction term $f(z,x)$ is a Carath\'eodory function (that is, for all $x\in\RR$, $z\mapsto f(z,x)$ is measurable and for almost all $z\in\Omega$, $x\mapsto f(z,x)$ is continuous). In the boundary condition, $\frac{\partial u}{\partial n_p}$ denotes the generalized normal derivative corresponding to the $p$-Laplace differential operator and is defined by extension of the map
$$u\mapsto\frac{\partial u}{\partial n_p}=|Du|^{p-2}(Du, n)_{\RR^N}\ \mbox{for all}\ u\in C^1(\overline{\Omega}),$$
with $n(\cdot)$ being the outward unit normal on $\partial\Omega$. The boundary coefficient term is $\beta\in C^{0,\alpha}(\partial\Omega)$ with $\alpha\in(0,1)$ and $\beta(z)\geq 0$ for all $z\in\partial\Omega$. The case $\beta\equiv 0$ corresponds to the Neumann problem.

Our aim here is to investigate the existence and multiplicity of nontrivial smooth solutions for problem (\ref{eq1}) when resonance occurs, namely when the function $\frac{f(z,x)}{|x|^{p-2}x}$ asymptotically as $x\rightarrow\pm\infty$ hits a variational eigenvalue of $-\Delta_p+\beta(z)I$ with Robin boundary condition (here $I$ denotes the identity operator). In the case of resonance with respect to the principal (first) eigenvalue, we consider problems with ``strong" resonance, namely we have
$$f(z,x)=\hat{\lambda}_1|x|^{p-2}x+g(z,x),$$
with $\hat{\lambda}_1$ being the first eigenvalue and $g(z,x)$ is a Carath\'eodory perturbation satisfying
$$\lim\limits_{x\rightarrow\pm\infty}g(z,x)=0\ \mbox{and}\ \lim_{x\rightarrow\pm\infty}\int^x_0g(z,s)ds\in\RR.$$

It is well-known that this class of resonant problems presents special interest since the energy functional of the problem exhibits partial lack of compactness.

Recently Neumann problems (that is, $\beta\equiv 0$) with an indefinite potential were investigated by Mugnai \& Papageorgiou \cite{15} and Papageorgiou \& R\u adulescu \cite{16,17}. Resonant problems were considered by Mugnai \& Papageorgiou \cite{15}, who deal with problems resonant at the first eigenvalue but do not cover the strongly resonant case. Strongly resonant semilinear Dirichlet problems with zero potential, were studied by Landesman \& Lazer \cite{11} (who coined the term ``strong resonance"), Thews \cite{21}, Bartolo, Benci \& Fortunato \cite{2}, Ward \cite{23} (existence of solutions) and Goncalves \& Miyagaki \cite{9} (multiplicity of solutions).

Our approach is based on variational tools coming from the critical point theory and on Morse theory (critical groups). In the next section, for the convenience of the reader, we recall some basic definitions and facts from these theories which we will need in the sequel and we fix our notation.

\section{Mathematical Background-Preliminary Results}

Let $X$ be a Banach space. By $X^*$ we denote the topological dual of $X$ and by $\left\langle \cdot,\cdot\right\rangle$ we denote the duality brackets for the pair $(X^*,X)$. Given $\varphi\in C^1(X,\RR)$, we say that $\varphi$ satisfies the ``Cerami condition at level $c\in\RR$" (the ``$C_c$-condition", for short), if the following property holds:
\begin{center}
``Every sequence $\{u_n\}_{n\geq 1}\subset X$ such that
\begin{eqnarray*}
	&&\varphi(u_n)\rightarrow c\
	\mbox{and}\ (1+||u_n||)\varphi'(u_n)\rightarrow 0\ \mbox{in}\ X^*\ \mbox{as}\ n\rightarrow\infty,
\end{eqnarray*}
admits a strongly convergent subsequence".
\end{center}
\begin{prop}\label{prop1}
	Assume that $\varphi\in C^1(X,\RR)$ is bounded below and let $m=\inf\limits_{X}\varphi$. If $\varphi$ satisfies the $C_c$-condition, then we can find $u_0\in X$ such that $\varphi(u_0)=\inf\limits_{X}\varphi$.
\end{prop}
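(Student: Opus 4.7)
The plan is to produce a minimizing sequence that is in addition a Cerami sequence at level $m$, and then extract a subsequence converging to a minimizer via the $C_m$-condition.

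First, since $m=\inf_X\varphi$ is finite, for each $n\geq 1$ I would pick $v_n\in X$ with $\varphi(v_n)\leq m+1/n^2$. I would then apply Ekeland's variational principle to $\varphi$ with perturbation $\epsilon_n=1/n^2$ and radius $\lambda_n=(1+\|v_n\|)/n$, producing $u_n\in X$ with $\varphi(u_n)\leq\varphi(v_n)$, $\|u_n-v_n\|\leq\lambda_n$, and $\|\varphi'(u_n)\|_*\leq\epsilon_n/\lambda_n=1/[n(1+\|v_n\|)]$. The scaling of $\lambda_n$ by $(1+\|v_n\|)$ is precisely what is needed to absorb the weight $(1+\|u_n\|)$ required by the Cerami condition: the triangle inequality gives $1+\|u_n\|\leq C(1+\|v_n\|)$ for $n$ large, whence $(1+\|u_n\|)\|\varphi'(u_n)\|_*\leq C/n\to 0$.

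Combined with $m\leq\varphi(u_n)\leq m+1/n^2$, this makes $\{u_n\}_{n\geq 1}$ a Cerami sequence at level $c=m$. By hypothesis $\varphi$ satisfies the $C_m$-condition, so some subsequence $u_{n_k}$ converges strongly in $X$ to a point $u_0\in X$, and continuity of $\varphi$ forces $\varphi(u_0)=\lim_k\varphi(u_{n_k})=m$, which is the desired conclusion.

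The main technical subtlety is the very first step: the naive form of Ekeland's principle (taking $\lambda_n\equiv 1$, say) yields only a Palais--Smale sequence $\|\varphi'(u_n)\|_*\to 0$, which is strictly weaker than Cerami when $\|u_n\|$ is unbounded. The introduction of the weighted radius $\lambda_n=(1+\|v_n\|)/n$ is what upgrades the resulting sequence to a genuine Cerami sequence; after that, the $C_m$-condition does all the remaining work and no further compactness input is needed.
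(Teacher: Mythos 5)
Your proof is correct; the paper states Proposition~\ref{prop1} without proof, treating it as a known fact from the critical-point-theory literature (cf.\ Gasinski \& Papageorgiou \cite{8}), and your argument is the standard one. The decisive observation you make---choosing the Ekeland radius $\lambda_n=(1+\|v_n\|)/n$ rather than a constant, so that $1+\|u_n\|\leq 2(1+\|v_n\|)$ and hence $(1+\|u_n\|)\|\varphi'(u_n)\|_*\leq 2/n\to 0$, upgrading the minimizing sequence to a genuine Cerami sequence at level $m$---is exactly the needed refinement, after which the $C_m$-condition and the continuity of $\varphi$ finish the argument.
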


The next result is known in the literature as the ``second deformation theorem" and is one of the main results in critical point theory. First we introduce some notation. Given $\varphi\in C^1(X,\RR)$ and $c\in\RR$, we define
\begin{eqnarray*}
	&&K_{\varphi}=\{u\in X:\varphi'(u)=0\},\\
	&&K^c_{\varphi}=\{u\in K_{\varphi}:\varphi(u)=c\},\\
	&&\varphi_c=\{u\in X:\varphi(u)\leq c\}.
\end{eqnarray*}
\begin{theorem}\label{th2}
	If $\varphi\in C^1(X,\RR),\ a\in\RR,\ a<b\leq+\infty,\ \varphi$ satisfies the $C_c$-condition for every $c\in\left[a,b\right)$, $\varphi$ has no critical values in $(a,b)$ and $\varphi^{-1}(a)$ contains at most a finite number of critical points, then we can find a deformation $h:[0,1]\times(\varphi^b\backslash K^b_{\varphi})\rightarrow \varphi^b$ such that
	\begin{itemize}
		\item[(a)] $h(1,\varphi^b\backslash K^b_{\varphi})\subset\varphi^a$;
		\item[(b)] $h(t,\cdot)|_{\varphi^a}=id|_{\varphi^a}$ for all $t\in[0,1]$;
		\item[(c)] $\varphi(h(t,u))\leq\varphi(h(s,u))$ for all $t,s\in[0,1]$ with $ s\leq t$, all $u\in\varphi^b\backslash K^b_{\varphi}$ (that is, the deformation $h$ is ``$\varphi$-decreasing").
	\end{itemize}
\end{theorem}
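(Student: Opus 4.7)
The plan is to establish Theorem \ref{th2} via a negative pseudo-gradient flow adapted to the Cerami condition. First I would construct, on the regular set $X\setminus K_{\varphi}$, a locally Lipschitz Cerami-type pseudo-gradient vector field $V:X\setminus K_{\varphi}\to X$ satisfying, say,
$$\|V(u)\|\leq 1\quad\text{and}\quad \langle \varphi'(u),V(u)\rangle \geq \tfrac{1}{2}\,\frac{\|\varphi'(u)\|_{*}}{1+\|u\|},$$
built in the usual way from a locally Lipschitz partition of unity on the paracompact open set $X\setminus K_{\varphi}$. The normalization $\|V\|\leq 1$ is the key device that, together with the weight $(1+\|u\|)^{-1}$ above, matches the Cerami hypothesis rather than the stronger Palais--Smale hypothesis.

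Next I would introduce locally Lipschitz cut-offs. Fix $\varepsilon>0$ small and pick $\eta:X\to[0,1]$ that equals $1$ on $\{u:\varphi(u)\in[a,b],\ \operatorname{dist}(u,K_{\varphi})\geq 2\varepsilon\}$ and vanishes on a neighborhood of $\varphi^{a-\varepsilon/2}\cup K_{\varphi}$, and a second cut-off vanishing outside $\varphi^{-1}([a-\varepsilon,b+\varepsilon])$ so that the modified field $W:=-\eta V$ extends by $0$ to all of $X$. Because $\|W\|\leq 1$, the Cauchy problem $\dot\sigma(t,u)=W(\sigma(t,u))$, $\sigma(0,u)=u$, has a unique global flow $\sigma\in C([0,\infty)\times X,X)$. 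By construction $\varphi\circ\sigma(\cdot,u)$ is nonincreasing, and it is identically $\varphi(u)$ whenever $\varphi(u)\leq a$, so points of $\varphi^{a}$ are fixed.

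The heart of the argument is to show that for every $u\in\varphi^{b}\setminus K_{\varphi}^{b}$ there is a first time $T(u)\in[0,\infty)$ with $\sigma(T(u),u)\in\varphi^{a}$. Suppose not; then $\varphi(\sigma(t,u))\searrow c^{*}\geq a$. If $c^{*}\in(a,b)$, the hypothesis that $(a,b)$ contains no critical values together with the Cerami condition forces $\|\varphi'\|_{*}/(1+\|\cdot\|)$ to stay bounded away from $0$ near $\varphi^{-1}([c^{*},c^{*}+\varepsilon])$, and integrating the pseudo-gradient inequality gives $\varphi(\sigma(t,u))\to-\infty$, a contradiction. The case $c^{*}=b$ is excluded because $u\notin K^{b}_{\varphi}$, so the flow strictly leaves level $b$ (and similarly if $\varphi(u)=b$ exactly). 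The remaining case $c^{*}=a$ uses the assumption that $K_{\varphi}^{a}$ is finite: if the orbit did not enter $\varphi^{a}$ in finite time, one would extract a Cerami sequence that avoids a neighborhood of $K_{\varphi}^{a}$, contradicting the Cerami compactness. Finally I would verify that $T:\varphi^{b}\setminus K_{\varphi}^{b}\to[0,\infty)$ is continuous (standard exit-time argument, using that $\varphi\circ\sigma$ is strictly decreasing as it crosses level $a$) and set
$$h(t,u)=\sigma\bigl(tT(u),u\bigr),\qquad (t,u)\in[0,1]\times(\varphi^{b}\setminus K_{\varphi}^{b}).$$
Property (a) holds by definition of $T$, property (b) because $\eta\equiv 0$ on $\varphi^{a}$ forces $T\equiv 0$ there and the flow to be stationary, and property (c) because $\varphi\circ\sigma(\cdot,u)$ is nonincreasing.

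The main obstacle is the last step of the flow analysis: ruling out $c^{*}=a$ and verifying continuity of $T(\cdot)$ when critical points do sit on the level set $\varphi^{-1}(a)$. This is precisely where the finiteness of $K_{\varphi}^{a}$ and the Cerami condition must be combined, and it is the only place in the argument where a purely local pseudo-gradient construction is not enough.
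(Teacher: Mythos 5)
The paper does not prove Theorem~\ref{th2}: it is quoted as a known background result (the second deformation theorem under the Cerami condition, see Gasinski \& Papageorgiou \cite{8} and Motreanu, Motreanu \& Papageorgiou \cite{14}). So the comparison is against the standard proof, and your sketch is along the right lines (pseudo-gradient flow, cut-offs, exit-time reparametrization), but it contains a concrete error in the Cerami-adapted pseudo-gradient estimate, and it leaves unaddressed exactly the step you yourself flag as the hard one.

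The pseudo-gradient normalization is wrong. You impose $\|V(u)\|\leq 1$ and $\langle\varphi'(u),V(u)\rangle\geq \tfrac12\,\|\varphi'(u)\|_{*}/(1+\|u\|)$, and then assert that the Cerami condition plus the absence of critical values in $(a,b)$ ``forces $\|\varphi'\|_{*}/(1+\|\cdot\|)$ to stay bounded away from $0$.'' That is not what the Cerami condition gives. The Cerami condition (combined with no critical values in a compact subinterval) yields a bound of the form $(1+\|u\|)\,\|\varphi'(u)\|_{*}\geq \alpha>0$ on the relevant level band; this only implies $\|\varphi'(u)\|_{*}/(1+\|u\|)\geq \alpha/(1+\|u\|)^{2}$, which is \emph{not} bounded away from zero as $\|u\|\to\infty$. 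With $\|V\|\leq 1$ you have $\|\sigma(t,u)\|\leq\|u\|+t$, so $\tfrac{d}{dt}\varphi(\sigma)\leq -\tfrac{\alpha}{2}(1+\|u\|+t)^{-2}$, whose integral over $[0,\infty)$ is finite; the flow need not reach $\varphi^{a}$ at all, and the claimed contradiction ``$\varphi(\sigma(t,u))\to-\infty$'' does not follow. To make the argument work you must drop the spurious $(1+\|u\|)^{-1}$ weight in the pseudo-gradient inequality (keeping simply $\langle\varphi'(u),V(u)\rangle\geq\tfrac12\|\varphi'(u)\|_{*}$ with $\|V\|\leq 1$, which combined with the Cerami lower bound gives a rate $\geq\tfrac{\alpha}{2}(1+\|u\|+t)^{-1}$, whose integral diverges logarithmically), or, as is more common, normalize $V$ so that $\|V(u)\|\sim 1+\|u\|$ and obtain a decrease rate bounded below by a constant.

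There is a second, unresolved gap. Cutting the field off so that it vanishes ``on a neighborhood of $\varphi^{a-\varepsilon/2}\cup K_{\varphi}$'' freezes the flow near the finitely many critical points sitting on $\varphi^{-1}(a)$, which defeats the purpose: orbits approaching those points must still enter $\varphi^{a}$, and $T(\cdot)$ must remain finite and continuous there. You correctly identify this as the delicate point (``the only place where a purely local pseudo-gradient construction is not enough''), but you stop short of doing it. The standard proof handles this by a careful local analysis around the (finitely many, hence isolated) critical points at level $a$, using the Cerami condition to control exit times in shrinking punctured neighborhoods, rather than by truncating the field near $K_{\varphi}$. As written, your construction yields a flow that is identically stationary on a neighborhood of $K^{a}_{\varphi}$ and thus cannot produce the required retraction onto $\varphi^{a}$.
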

\begin{remark}\label{rem3}
	Note that if $b=+\infty$, then $\varphi^b\backslash K^b_{\varphi}=X$.The conclusion of Theorem \ref{th2} says that $\varphi^a$ is a strong deformation retract of $\varphi^b\backslash K^{b}_{\varphi}$. A special case of this result, is the so-called ``Noncritical Interval Theorem", which says:
	\begin{center}
	``If $\varphi\in C^1(X,\RR)$ satisfies the $C_c$-condition for all $c\in\left[a,b\right]$
	and $K_{\varphi}\cap \varphi^{-1}[a,b]=\emptyset$,
	then $\varphi^a$ is a strong deformation retract of $\varphi^b$".
	\end{center}
\end{remark}

In critical point theory the notion of linking sets, plays a central role:
\begin{definition}\label{def4}
	Let $Y$ be a Hausdorff topological space and $E_0\subseteq E$ and $D$ are nonempty subsets of $Y$. We say that the pair $\{E_0,E\}$ is ``linking" with $D$ in $Y$, if the following conditions hold:
	\begin{itemize}
		\item[(a)] $E_0\cap D=\emptyset$;
		\item[(b)] for any $\gamma\in C(E,Y)$ with $\gamma|_{E_0}=id|_{E_0}$ we have $\gamma(E)\cap D\neq\emptyset$.
	\end{itemize}
\end{definition}

Using this notion, one can prove a general minimax principle from which follow as special cases the classical results of critical point theory (mountain pass theorem, saddle point theorem, generalized mountain pass theorem). For future use we state the mountain pass theorem.
\begin{theorem}\label{th5}
	Assume that $\varphi\in C^1(X,\RR),\ u_0,u_1\in X,\ ||u_1-u_0||>r$
	\begin{eqnarray*}
		&&\max\{\varphi(u_0),\varphi(u_1)\}\leq \inf[\varphi(u):||u-u_0||=r]=m_r,\\
		&&c=\inf\limits_{\gamma\in\Gamma}\max\limits_{0\leq t\leq 1}\varphi(\gamma(t))\ \mbox{with}\ \Gamma=\{\gamma\in C([0,1],X):\gamma(0)=u_0,\gamma(1)=u_1\}
	\end{eqnarray*}
	and $\varphi$ satisfies the $C_c$-condition, then $c\geq m_r$, $c$ is a critical value of $\varphi$ (that is, $K^c_{\varphi}\neq\emptyset$) and if $c=m_r$, then
	$$K^c_{\varphi}\cap B_r(u_0)\neq\emptyset$$
	with $\partial B_r(u_0)=\{u\in X:||u-u_0||=r\}$.
\end{theorem}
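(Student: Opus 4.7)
The plan is to combine the elementary linking of the pair $(\{u_0,u_1\},\partial B_r(u_0))$ with the noncritical-interval consequence of Theorem \ref{th2}. The linking itself is an intermediate-value argument: for any $\gamma \in \Gamma$, the function $t \mapsto \|\gamma(t)-u_0\|$ runs from $0$ to $\|u_1-u_0\|>r$, hence there is $t^*\in(0,1)$ with $\gamma(t^*)\in\partial B_r(u_0)$, so $\max_t\varphi(\gamma(t))\geq \varphi(\gamma(t^*))\geq m_r$, and taking the infimum over $\gamma\in\Gamma$ yields $c\geq m_r$.

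For the critical-value assertion I argue by contradiction: assume $K^c_\varphi=\emptyset$. The $C_c$-condition first produces a noncritical window $[c-\delta,c+\delta]$, for otherwise a sequence of critical points $v_n$ with $\varphi(v_n)\to c$ and $(1+\|v_n\|)\varphi'(v_n)=0\to 0$ would, by $C_c$, admit a subsequential limit in $K^c_\varphi$, contradicting emptiness. In the strict case $c>m_r$, shrink $\delta$ so that $c-\delta>m_r\geq\max\{\varphi(u_0),\varphi(u_1)\}$, placing $u_0,u_1$ in $\varphi^{c-\delta}$. Remark \ref{rem3} furnishes a $\varphi$-decreasing deformation $h$ with $h(1,\varphi^{c+\delta})\subset\varphi^{c-\delta}$ and $h(t,\cdot)|_{\varphi^{c-\delta}}=\mathrm{id}$. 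Choosing $\gamma\in\Gamma$ with $\max_t\varphi(\gamma(t))<c+\delta$, the deformed path $\tilde\gamma:=h(1,\gamma(\cdot))$ still lies in $\Gamma$ (its endpoints are fixed), yet $\max_t\varphi(\tilde\gamma)\leq c-\delta<c$, contradicting the definition of $c$. Hence $K^c_\varphi\neq\emptyset$.

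The degenerate case $c=m_r$ is more delicate, since then $u_0,u_1$ need not lie inside $\varphi^{c-\delta}$ and the deformation above could drag the endpoints. Suppose $K^c_\varphi\cap\partial B_r(u_0)=\emptyset$; by $C_c$-compactness of $K^c_\varphi$ one picks an open neighborhood $U\supset K^c_\varphi$ whose closure is disjoint from $\partial B_r(u_0)$. I would invoke a localized quantitative deformation lemma---constructed from a pseudo-gradient field for the scaled functional $\varphi/(1+\|\cdot\|)$, cut off inside a smaller neighborhood of $K^c_\varphi$---producing a deformation $\eta$ that fixes $\partial B_r(u_0)$ pointwise and sends $\varphi^{c+\delta}\setminus U$ into $\varphi^{c-\delta}$. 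Any almost-optimal $\gamma\in\Gamma$ can be perturbed slightly to avoid $U$, and then $\eta(1,\gamma(\cdot))\in\Gamma$ satisfies $\max_t\varphi(\eta(1,\gamma))\leq c-\delta$; but by the linking step this path still meets $\partial B_r(u_0)$, where $\varphi\geq m_r=c$, giving the contradiction.

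The main obstacle is the last case: Theorem \ref{th2} as stated does not directly produce a deformation that leaves $\partial B_r(u_0)$ pointwise fixed, so one must replace it by a slightly sharper (localized) deformation lemma---standard in critical-point theory but not explicitly recorded in the excerpt---in order to push the maximum strictly below $c$ without disturbing the linking sphere.
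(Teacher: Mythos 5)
The paper does not prove Theorem \ref{th5}; it is stated as a known result and the reader is referred to Gasinski \& Papageorgiou \cite{8}. Your blind proof follows the standard linking-plus-deformation route, which is indeed the way this theorem is proved in that reference, so the overall approach is the right one. The intermediate-value argument showing $c\geq m_r$ is correct and complete.

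There are, however, two genuine gaps in the deformation steps. First, in the strict case $c>m_r$ you invoke Remark \ref{rem3} (the Noncritical Interval Theorem), but that statement requires the $C_{c'}$-condition for \emph{every} $c'\in[a,b]$, whereas the hypothesis here only gives the Cerami condition at the single level $c$. What the single-level $C_c$-condition plus $K^c_\varphi=\emptyset$ actually yields is a quantitative estimate $(1+\|u\|)\|\varphi'(u)\|_*\geq\epsilon$ for all $u$ with $|\varphi(u)-c|\leq\delta$; one must then build a \emph{local} deformation from a pseudo-gradient field using this estimate (a quantitative deformation lemma in the sense of Willem), rather than citing the noncritical interval theorem. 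The underlying idea is right, but the appeal to Remark \ref{rem3} as written is not licensed by the hypotheses.

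Second, and more seriously, in the degenerate case $c=m_r$ the step ``any almost-optimal $\gamma\in\Gamma$ can be perturbed slightly to avoid $U$'' is unjustified and in general false: a path from $u_0$ to $u_1$ whose $\varphi$-maximum stays below $c+\delta$ may be topologically forced to pass through any prescribed neighborhood $U$ of $K^c_\varphi$, and the endpoints $u_0,u_1$ might themselves lie in $K^c_\varphi$ (since $\varphi(u_0),\varphi(u_1)\leq m_r=c$ with possible equality), so the perturbation cannot even start. The standard argument does not try to move the path off $U$. Instead one applies the quantitative deformation lemma relative to the closed set $S=\partial B_r(u_0)$ (on which, by assumption, $K^c_\varphi\cap S=\emptyset$ gives a uniform lower bound on the scaled gradient near level $c$), obtaining a deformation that strictly lowers $\varphi$ along $S$ while being the identity away from a tube around $S\cap\varphi^{-1}([c-2\delta,c+2\delta])$; then the linking property forces the deformed path to cross $\partial B_r(u_0)$ at a point where $\varphi<c=m_r$, contradicting the definition of $m_r$. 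You correctly flag the degenerate case as the main obstacle and correctly identify that a localized deformation lemma is needed, but the specific mechanism you propose for using it does not close the argument.
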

\begin{remark}
	For this theorem the linking sets are
	$$E_0=\{u_0,u_1\},\ E=\{(1-t)u_0+tu_1:0\leq t\leq 1\}\ \mbox{and}\ D=\partial B_r(u_0).$$
	
	For details on these and related issues we refer to Gasinski \& Papageorgiou \cite{8}.
	\end{remark}

	In our analysis of problem (\ref{eq1}), we will make use of the following spaces:
	\begin{itemize}
		\item the Sobolev space $W^{1,p}(\Omega),\ 1<p<\infty$;
		\item the Banach space $C^1(\overline{\Omega})$;
		\item the boundary Lebesgue spaces $L^q(\partial\Omega)$, $1\leq q\leq \infty$.
	\end{itemize}
	
	By $||\cdot||$ we denote the norm of $W^{1,p}(\Omega)$ defined by
	$$||u||=[||u||^p_p+||Du||^p_p]^{1/p}\ \mbox{for all}\ u\in W^{1,p}(\Omega).$$

By $\left\langle \cdot,\cdot\right\rangle$ we denote the duality brackets for the pair $(W^{1,p}(\Omega)^*,W^{1,p}(\Omega))$.

The Banach space $C^1(\overline{\Omega})$ is an ordered Banach space with positive (order) cone given by
$$C_+=\{u\in C^1(\overline{\Omega}):u(z)\geq 0\ \mbox{for all}\ z\in\overline{\Omega}\}.$$

This cone has a nonempty interior given by
$$D_+=\{u\in C_+:u(z)>0\ \mbox{for all}\ z\in\overline{\Omega}\}.$$

On $\partial\Omega$ we consider the $(N-1)$-dimensional Hausdorff (surface) measure $\sigma(\cdot)$. Using this measure, we can define in the usual way the Lebesgue spaces $L^q(\partial\Omega),\ 1\leq q\leq\infty$. We know that there exists a unique continuous linear map $\gamma_0:W^{1,p}(\Omega)\rightarrow L^p(\partial\Omega)$, known as the ``trace map", such that
$$\gamma_0(u)=u|_{\partial\Omega}\ \mbox{for all}\ u\in W^{1,p}(\Omega)\cap C(\overline{\Omega}).$$

So, the trace map extends the notion of ``boundary values" to all Sobolev functions. The trace map $\gamma_0$ is compact into $L^q(\partial\Omega)$ with $q\in\left[1,\frac{Np-p}{N-p}\right)$  if $p<N$ and  $q\in\left[1,\infty\right)$ if $p\geq N$. In what follows, for the sake of notational simplicity we drop the use of the map $\gamma_0$. All restrictions of the Sobolev function on $\partial\Omega$ are understood in the sense of traces.

Our hypotheses on the potential function $\xi(\cdot)$ and the boundary coefficient $\beta(\cdot)$ are the following:

\smallskip
$H(\xi):\xi\in L^{\infty}(\Omega)$.

\smallskip
$H(\beta): \beta\in C^{0,\alpha}(\partial\Omega)$ with $\alpha\in(0,1)$ and $\beta(z)\geq 0$ for all $z\in\partial\Omega$.

\smallskip
We consider the $C^1$-functional $\vartheta:W^{1,p}(\Omega)\rightarrow\RR$ defined by
$$\vartheta(u)=||Du||^p_p+\int_{\Omega}\xi(z)|u|^pdz+\int_{\partial\Omega}\beta(z)|u|^pd\sigma\ \mbox{for all}\ u\in W^{1,p}(\Omega).$$

Let $f_0:\Omega\times\RR\rightarrow\RR$ be a Carath\'eodory function such that
$$|f_0(z,x)|\leq a_0(z)(1+|x|^{r-1})\ \mbox{for almost all}\ z\in\Omega,\ \mbox{all}\ x\in\RR,$$
with $a_0\in L^{\infty}(\Omega)_+$ and $1<r\leq p^*=\left\{\begin{array}{ll}
	\frac{Np}{N-p}&\mbox{if}\ p<N\\
	+\infty&\mbox{if}\ p\geq N
\end{array}\right.$ (the critical Sobolev exponent). We set $F_0(z,x)=\int^x_0f_0(z,s)ds$ and consider the $C^1$-functional $\varphi_0:W^{1,p}(\Omega)\rightarrow\RR$ defined by
$$\varphi_0(u)=\frac{1}{p}\vartheta(u)-\int_{\Omega}F_0(z,u)dz\ \mbox{for all}\ u\in W^{1,p}(\Omega).$$

From Papageorgiou \& R\u adulescu \cite{18} (subcritical case) and \cite{19} (critical case) we have the following result.
\begin{prop}\label{prop6}
	Assume that $u_0\in W^{1,p}(\Omega)$ is a local $C^1(\overline{\Omega})$-minimizer of $\varphi_0$, that is, there exists $\rho_0>0$ such that
	$$\varphi_0(u_0)\leq\varphi_0(u_0+h)\ \mbox{for all}\ h\in C^1(\overline{\Omega})\ \mbox{with}\ ||h||_{C^1(\overline{\Omega})}\leq\rho_0.$$
	Then $u_0\in C^{1,\alpha}(\overline{\Omega})$ for some $\alpha\in(0,1)$ and $u_0$ is also a local $W^{1,p}(\Omega)$-minimizer of $\varphi_0$, that is, there exists $\rho_1>0$ such that
	$$\varphi_0(u_0)\leq\varphi_0(u_0+h)\ \mbox{for all}\ h\in W^{1,p}(\Omega)\ \mbox{with}\ ||h||\leq\rho_1.$$
\end{prop}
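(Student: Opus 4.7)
My plan is the standard Brezis--Nirenberg style argument, adapted to the $p$-Laplacian with Robin boundary conditions, proceeding by contradiction.

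\smallskip
Suppose $u_0$ is \emph{not} a local $W^{1,p}(\Omega)$-minimizer of $\varphi_0$. I would first fix $\rho_0>0$ as in the hypothesis and show that, for each small $\rho\in(0,\rho_0]$, the constrained minimization problem
$$m(\rho)=\inf\{\varphi_0(u):u\in\overline{B}_\rho(u_0)\subseteq W^{1,p}(\Omega)\}$$
is attained. Subcritical growth of $f_0$ together with the compactness of the Sobolev embedding $W^{1,p}(\Omega)\hookrightarrow L^r(\Omega)$ (and the compactness of the trace into the appropriate boundary Lebesgue space) makes $u\mapsto\int_\Omega F_0(z,u)\,dz$ sequentially weakly continuous; meanwhile $\frac{1}{p}\vartheta$ is sequentially weakly lower semicontinuous, since $\|Du\|_p^p$ is, while the $\xi$ and $\beta$ terms are weakly continuous by the same compactness. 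Hence a minimizer $v_\rho\in \overline{B}_\rho(u_0)$ exists. By the negation of the conclusion, for every $\rho\in(0,\rho_0]$ we have $m(\rho)<\varphi_0(u_0)$, so in particular $v_\rho\neq u_0$. In the critical case $r=p^*$ one instead applies Ekeland's variational principle on $\overline{B}_\rho(u_0)$ to produce an almost-minimizing sequence and then appeals to \cite{19}; the broad outline is the same.

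\smallskip
Next I would derive the Euler--Lagrange equation. If $\|v_\rho-u_0\|<\rho$ then $\varphi_0'(v_\rho)=0$ in $W^{1,p}(\Omega)^*$. Otherwise $\|v_\rho-u_0\|=\rho$ and the Kuhn--Tucker condition yields a Lagrange multiplier $\mu_\rho\leqslant 0$ such that
$$\varphi_0'(v_\rho)=\mu_\rho\,\Psi'(v_\rho-u_0),$$
where $\Psi(w)=\frac{1}{p}\|w\|^p$. Written out weakly, $v_\rho$ solves a quasilinear Robin problem of the form
$$-\Delta_p v_\rho+\xi(z)|v_\rho|^{p-2}v_\rho-\mu_\rho\bigl(-\Delta_p(v_\rho-u_0)+|v_\rho-u_0|^{p-2}(v_\rho-u_0)\bigr)=f_0(z,v_\rho)$$
in $\Omega$, with the corresponding Robin boundary condition involving $\beta$. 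Since $v_\rho\to u_0$ in $W^{1,p}(\Omega)$ as $\rho\downarrow 0$, the right-hand side is uniformly bounded in $L^{s}(\Omega)$ for some $s>N/p$ (by subcritical growth applied to a convergent-in-$W^{1,p}$, hence $L^r$-bounded, sequence), and the Robin data is controlled by $H(\beta)$.

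\smallskip
The core step is then a uniform regularity estimate: by the nonlinear regularity theory for quasilinear Robin problems (Lieberman-type results, exactly as used in \cite{18,19}), there exist $\alpha'\in(0,1)$ and $M>0$, independent of small $\rho$, such that
$$\|v_\rho\|_{C^{1,\alpha'}(\overline{\Omega})}\leqslant M.$$
The compact embedding $C^{1,\alpha'}(\overline{\Omega})\hookrightarrow C^1(\overline{\Omega})$ together with $v_\rho\to u_0$ in $W^{1,p}(\Omega)$ (and hence along a subsequence uniformly, identifying the limit) then gives $v_\rho\to u_0$ in $C^1(\overline{\Omega})$. For $\rho$ small enough, $\|v_\rho-u_0\|_{C^1(\overline{\Omega})}\leqslant\rho_0$, so by the local $C^1$-minimizer hypothesis $\varphi_0(u_0)\leqslant\varphi_0(v_\rho)=m(\rho)$, contradicting $m(\rho)<\varphi_0(u_0)$. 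The nonlinear regularity for the PDE derived in the Lagrange-multiplier case is the main technical obstacle, but it is handled by the machinery imported from \cite{18,19}; once the $C^{1,\alpha'}$ estimate is in hand, the rest is a soft compactness argument. Finally, the $C^{1,\alpha}$ regularity of $u_0$ itself follows by applying the same regularity result to the equation $\varphi_0'(u_0)=0$, which $u_0$ automatically satisfies once the first conclusion is established.
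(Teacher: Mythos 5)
Note first that the paper does not prove Proposition~\ref{prop6} at all; it imports it from the references \cite{18} (subcritical) and \cite{19} (critical), where the argument is exactly the Brezis--Nirenberg / Garc\'{\i}a Azorero--Peral--Manfredi scheme you outline: negate, minimize on small closed balls $\overline{B}_\rho(u_0)$, extract a Lagrange-multiplier problem, obtain uniform $C^{1,\alpha'}$ bounds, and conclude by compact embedding into $C^1(\overline{\Omega})$. So you have identified the right route and the right tools.

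There is, however, a genuine logical misordering in your write-up, not a cosmetic one. You place the $C^{1,\alpha}$ regularity of $u_0$ itself at the very end (``once the first conclusion is established''), but you need it \emph{before} the Lagrange-multiplier step. The equation you derive for $v_\rho$ has, in divergence form, a principal part of the shape $a(z,\eta)=|\eta|^{p-2}\eta-\mu_\rho\,|\eta-Du_0(z)|^{p-2}\bigl(\eta-Du_0(z)\bigr)$, in which $Du_0$ enters as a coefficient; a Lieberman-type $C^{1,\alpha'}$ estimate requires this coefficient field, hence $Du_0$, to be H\"older continuous, and without that the uniform bound on $v_\rho$ simply is not available. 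The fix is cheap and does not need the conclusion of the proposition: since $u_0$ is a local $C^1(\overline{\Omega})$-minimizer and $C^1(\overline{\Omega})$ is dense in $W^{1,p}(\Omega)$, one gets $\varphi_0'(u_0)=0$ directly (differentiate $t\mapsto\varphi_0(u_0+th)$ at $t=0$ for $h\in C^1(\overline{\Omega})$, then pass to all $h\in W^{1,p}(\Omega)$ by density); Moser iteration then gives $u_0\in L^\infty(\Omega)$, and Lieberman's Theorem~2 gives $u_0\in C^{1,\alpha}(\overline{\Omega})$. This must come first. Two further details you pass over but which belong to the ``main technical obstacle'' you flag: (i) a uniform $L^\infty$ bound on $v_\rho$ (again Moser/De~Giorgi iteration, uniform as $\rho\downarrow 0$) must precede the $C^{1,\alpha'}$ estimate, since Lieberman's theorem is stated for bounded solutions; and (ii) the multipliers $\mu_\rho\leqslant 0$ should be shown to stay in a bounded set as $\rho\downarrow 0$, so that the ellipticity and growth constants in the structure conditions can be taken independent of $\rho$. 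With these adjustments your outline matches the cited proof.
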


Let $A:W^{1,p}(\Omega)\rightarrow W^{1,p}(\Omega)^*$ be the nonlinear map defined by
$$\left\langle A(u),h\right\rangle=\int_{\Omega}|Du|^{p-2}(Du,Dh)_{\RR^N}dz\ \mbox{for all}\ u,h\in W^{1,p}(\Omega).$$

The following well-known result summarizes the man properties of the map $A(\cdot)$ (see, for example, Motreanu, Motreanu \& Papageorgiou \cite[p. 40]{14}).
\begin{prop}\label{prop7}
	The map $A:W^{1,p}(\Omega)\rightarrow W^{1,p}(\Omega)^*$ is bounded (maps bounded sets to bounded sets), continuous, monotone (thus maximal monotone too) and of type $(S)_+$ that is,
	$$``u_n\stackrel{w}{\rightarrow}u\ \mbox{in}\ W^{1,p}(\Omega)\ \mbox{and}\ \limsup\limits_{n\rightarrow\infty}\left\langle A(u_n),u_n-u\right\rangle\leq 0\Rightarrow u_n\rightarrow u\ \mbox{in}\ W^{1,p}(\Omega)."$$
\end{prop}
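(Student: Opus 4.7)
The plan is to verify the four stated properties of $A$ one after another, exploiting the elementary vector inequalities for the map $\xi \mapsto |\xi|^{p-2}\xi$ on $\RR^N$.

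First I would dispose of boundedness and continuity. By H\"older's inequality,
$$|\langle A(u),h\rangle|\leq \int_{\Omega}|Du|^{p-1}|Dh|\,dz\leq \|Du\|_p^{p-1}\|Dh\|_p\leq \|u\|^{p-1}\|h\|,$$
so $\|A(u)\|_*\leq \|u\|^{p-1}$, which gives boundedness. For continuity, if $u_n\to u$ in $W^{1,p}(\Omega)$, then $Du_n\to Du$ in $L^p(\Omega,\RR^N)$, and the Nemytskii operator associated with $\xi\mapsto |\xi|^{p-2}\xi$ sends $L^p(\Omega,\RR^N)$ continuously into $L^{p'}(\Omega,\RR^N)$; hence $A(u_n)\to A(u)$ in $W^{1,p}(\Omega)^*$ by H\"older once more.

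Next I would establish monotonicity from the standard pointwise inequality
$$(|\xi|^{p-2}\xi-|\eta|^{p-2}\eta,\xi-\eta)_{\RR^N}\geq 0\quad\text{for all }\xi,\eta\in\RR^N,$$
valid for $p>1$ (and strict when $\xi\neq\eta$). Integrating over $\Omega$ with $\xi=Du$, $\eta=Dv$ yields $\langle A(u)-A(v),u-v\rangle\geq 0$. Since $A$ is continuous and monotone, it is in particular hemicontinuous and monotone, hence maximal monotone by the classical result of Minty/Browder.

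The main work is the $(S)_+$ property. Assume $u_n\stackrel{w}{\rightharpoonup}u$ in $W^{1,p}(\Omega)$ and $\limsup_{n\to\infty}\langle A(u_n),u_n-u\rangle\leq 0$. By monotonicity,
$$\langle A(u_n),u_n-u\rangle\geq \langle A(u),u_n-u\rangle,$$
and the right-hand side tends to $0$ by weak convergence; combined with the $\limsup$ hypothesis this forces $\langle A(u_n)-A(u),u_n-u\rangle\to 0$. Now I would invoke the pointwise inequality relating the monotonicity expression to the gradient difference: for $p\geq 2$,
$$|\xi-\eta|^p\leq c_p\,(|\xi|^{p-2}\xi-|\eta|^{p-2}\eta,\xi-\eta)_{\RR^N},$$
while for $1<p<2$ one has the weaker
$$|\xi-\eta|^2\leq c_p\,(|\xi|+|\eta|)^{2-p}\,(|\xi|^{p-2}\xi-|\eta|^{p-2}\eta,\xi-\eta)_{\RR^N},$$
which after H\"older (with exponents $2/p$ and $2/(2-p)$) and the fact that $\{Du_n\}$ is bounded in $L^p$ again yields $\|Du_n-Du\|_p\to 0$. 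Combining with $u_n\to u$ in $L^p(\Omega)$ (Rellich--Kondrachov) gives $u_n\to u$ in $W^{1,p}(\Omega)$.

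The only genuine difficulty is the case $1<p<2$ in the $(S)_+$ step, where the pointwise inequality degenerates and one must absorb the weight $(|Du_n|+|Du|)^{2-p}$ using the uniform $L^p$-bound on the gradients; everything else is a direct application of H\"older, Nemytskii continuity, and the elementary monotonicity inequality for $\xi\mapsto |\xi|^{p-2}\xi$.
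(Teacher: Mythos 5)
The paper does not actually prove Proposition \ref{prop7}; it is stated as a well-known result with a citation to Motreanu, Motreanu \& Papageorgiou \cite[p.~40]{14}, so there is no internal proof to compare against. Your argument is a correct, self-contained proof along entirely standard lines: boundedness and continuity via H\"older and Nemytskii continuity of $\xi\mapsto|\xi|^{p-2}\xi$ from $L^p$ to $L^{p'}$; monotonicity from the elementary vector inequality; maximal monotonicity via Minty--Browder from hemicontinuity plus monotonicity; and the $(S)_+$ property by splitting into $p\geq 2$ and $1<p<2$, the latter handled with the degenerate pointwise bound and H\"older with exponents $2/p$, $2/(2-p)$, absorbing the weight $(|Du_n|+|Du|)^{2-p}$ with the $L^p$ bound on the gradients. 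One small point worth making explicit: the convergence $u_n\to u$ in $L^p(\Omega)$ that you invoke at the end comes from the compact embedding $W^{1,p}(\Omega)\hookrightarrow\hookrightarrow L^p(\Omega)$ (Rellich--Kondrachov for the bounded $C^2$ domain $\Omega$), applied to the weakly convergent sequence; you state this, and it is correct. The proof is complete.
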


We will also use some facts about the spectrum of the differential operator $u\mapsto -\Delta_pu+\xi(z)u$ with Robin boundary condition.

So, we consider the following nonlinear eigenvalue problem:
\begin{eqnarray}\label{eq2}
	\left\{\begin{array}{ll}
		-\Delta_pu(z)+\xi(z)|u(z)|^{p-2}u(z)=\hat{\lambda}|u(z)|^{p-2}u(z),&\\
		\frac{\partial u}{\partial n_p}+\beta(z)|u|^{p-2}u=0\ \mbox{on}\ \partial\Omega.&
	\end{array}\right\}
\end{eqnarray}

By an eigenvalue, we mean a $\hat{\lambda}\in\RR$ for which problem (\ref{eq2}) has a nontrivial solution $\hat{u}\in W^{1,p}(\Omega)$, known as an eigenfunction corresponding to the eigenvalue $\hat{\lambda}$. From Papageorgiou \& R\u adulescu \cite{19}, we know that $\hat{u}\in L^{\infty}(\Omega)$ and so we can apply Theorem 2 of Lieberman \cite{13} and infer that $\hat{u}\in C^1(\overline{\Omega})$.

From Mugnai \& Papageorgiou \cite{15} and Papageorgiou \& R\u adulescu \cite{18}, we know that problem (\ref{eq2}) admits a smallest eigenvalue $\hat{\lambda}_1\in\RR$ which has the following properties:
\begin{itemize}
	\item $\hat{\lambda}_1$ is isolated in the spectrum $\sigma_0(p)$ of (\ref{eq2}) (that is, we can find $\epsilon>0$ such that $(\hat{\lambda}_1,\hat{\lambda}_1+\epsilon)\cap\sigma_0(p)=\emptyset$).
	\item $\hat{\lambda}_1$ is simple (that is, if $\hat{u},\hat{v}$ are eigenfunctions corresponding to $\hat{\lambda}_1$, then $\hat{u}=\eta\hat{u}$ with $\eta\in\RR\backslash\{0\}$).
	\begin{equation}\label{eq3}
		\bullet\ \hat{\lambda}_1=\inf\left[\frac{\vartheta(u)}{||u||^p_p}:u\in W^{1,p}(\Omega),u\neq 0\right].\hspace{7cm}
	\end{equation}
\end{itemize}

The infimum in (\ref{eq3}) is realized on the one dimensional eigenspace corresponding to $\hat{\lambda}_1$. The above properties of $\hat{\lambda}_1$ imply that the eigenfunctions corresponding to $\hat{\lambda}_1$ do not change sign. Let $\hat{u}_1$ be the $L^p$-normalized (that is, $||\hat{u}_1||_p=1$) positive eigenfunction corresponding to $\hat{\lambda}_1$. As we already mentioned, the nonlinear regularity theory implies that $\hat{u}_1\in C_+$. In fact, the nonlinear maximum principle (see, for example, Gasinski \& Papageorgiou \cite[p. 738]{8}), implies that $\hat{u}_1\in D_+$. An eigenfunction $\hat{u}$ which corresponds to an eigenvalue $\hat{\lambda}\neq\hat{\lambda}_1$ is nodal (that is. sign changing). Since the spectrum $\sigma_0(p)$ of (\ref{eq2}) is closed and $\hat{\lambda}_1$ is isolated, the second eigenvalue $\hat{\lambda}_2$ is well-defined by
\begin{equation}\label{eq4}
	\hat{\lambda}_2=\min[\hat{\lambda}\in\sigma_0(p):\hat{\lambda}>\hat{\lambda}_1].
\end{equation}

To produce additional eigenvalues, we employ the Ljusternik-Schnirelmann minimax scheme, which generates a whole nondecreasing sequence $\{\hat{\lambda}_k\}_{k\in\NN}$ of eigenvalues of (\ref{eq2}) such that $\hat{\lambda}_k\rightarrow+\infty$. These eigenvalues are known as ``variational eigenvalues" and depending on the index used in the execution of the Ljusternik-Schirelmann minimax scheme, we generate different sequences of variational eigenvalues. We do not know if these sequence coincide and if they exhaust the spectrum $\sigma_0(p)$. This is the case if $p=2$ (linear eigenvalue problem) or if $N=1$ (ordinary differential equation). Moreover, we know that all these sequences of variational eigenvalues coincide in the first two elements $\hat{\lambda}_1$ and $\hat{\lambda}_2$, which are given by (\ref{eq3}) and (\ref{eq4}). In fact for $\hat{\lambda}_2$ we have a useful minimax characterization. So, let
\begin{eqnarray*}
	&&\partial B^{L^p}_1=\{u\in L^p(\Omega):||u||_p=1\},\\
	&&M=W^{1,p}(\Omega)\cap \partial B^{L^p}_1,\\
	&&\hat{\Gamma}=\{\hat{\gamma}\in C([-1,1],M):\hat{\gamma}(-1)=-\hat{u}_1,\hat{\gamma}(1)=\hat{u}_1\}.
\end{eqnarray*}

Using these items we can have the following minimax characterization of $\hat{\lambda}_2$ (see \cite{15}, \cite{18}).
\begin{prop}\label{prop8}
	$\hat{\lambda}_2=\inf\limits_{\hat{\gamma}\in\hat{\Gamma}}\max\limits_{-1\leq t\leq 1}\ \vartheta(\hat{\gamma}(t))$.
\end{prop}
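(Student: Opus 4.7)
The plan is to establish both inequalities for $c^* := \inf_{\hat\gamma\in\hat\Gamma}\max_{t\in[-1,1]}\vartheta(\hat\gamma(t))$.

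For the upper bound $c^* \le \hat\lambda_2$: Let $\hat v \in M$ be an eigenfunction for $\hat\lambda_2$; since $\hat v$ is nodal, write $\hat v = \hat v^+ - \hat v^-$ with both parts nontrivial, and set $a := \|\hat v^+\|_p$, $b := \|\hat v^-\|_p$. Testing the equation in (\ref{eq2}) for $\hat v$ against $\hat v^\pm$ and exploiting disjointness of supports yields $\vartheta(\hat v^\pm) = \hat\lambda_2 \|\hat v^\pm\|_p^p$. I construct an admissible path in three pieces. The central piece is $s \in [0,1] \mapsto u(s)/\|u(s)\|_p$ with $u(s) := s\hat v^+/a - (1-s)\hat v^-/b$; the same disjointness argument gives $\vartheta(u(s)/\|u(s)\|_p) \equiv \hat\lambda_2$. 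The two lateral pieces, joining $\hat u_1$ to $\hat v^+/a$ inside $C_+\cap M$ and $-\hat v^-/b$ to $-\hat u_1$ inside $-C_+\cap M$, are realized via the $L^p$-spherical interpolant $w_t := (t\hat u_1^p + (1-t)(\hat v^+/a)^p)^{1/p}$; the pointwise bound $|Dw_t|^p \le t|D\hat u_1|^p + (1-t)|D(\hat v^+/a)|^p$, combined with the linearity of $\int\xi w_t^p$ and $\int_{\partial\Omega}\beta w_t^p$ in $w_t^p$, gives $\vartheta(w_t) \le t\hat\lambda_1 + (1-t)\hat\lambda_2 \le \hat\lambda_2$.

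For the lower bound $c^* \ge \hat\lambda_2$: Work with $\psi := \vartheta|_M$, a $C^1$-functional on the $C^1$-Banach submanifold $M \subset W^{1,p}(\Omega)$. By the Lagrange multiplier rule, critical points of $\psi$ are precisely the $L^p$-normalized eigenfunctions of (\ref{eq2}), and critical values are eigenvalues. I first show $c^* > \hat\lambda_1$: using the $(S)_+$-property (Proposition \ref{prop7}) and compactness of the trace embedding, any sequence $u_n \in M$ with $\psi(u_n) \to \hat\lambda_1$ admits a subsequence converging strongly to $\pm\hat u_1$ in $W^{1,p}(\Omega)$, by (\ref{eq3}) and the simplicity of $\hat\lambda_1$. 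Hence for some $\delta_0 > 0$ the sublevel $\{\psi \le \hat\lambda_1 + \delta_0\}$ decomposes into two disjoint open $M$-neighborhoods of $\pm\hat u_1$; since a continuous path from $-\hat u_1$ to $\hat u_1$ cannot lie in such a disjoint union, no admissible path has maximum $\vartheta$ at most $\hat\lambda_1 + \delta_0$. Assume now for contradiction $c^* < \hat\lambda_2$; isolation of $\hat\lambda_1$ in $\sigma_0(p)$ and the definition of $\hat\lambda_2$ ensure $(\hat\lambda_1, \hat\lambda_2)$ contains no critical values of $\psi$. Applying Theorem \ref{th2} to $\psi$ on $M$ with $a := \hat\lambda_1 + \delta_0$ and $b := c^* + \varepsilon < \hat\lambda_2$ (for $\varepsilon > 0$ small) produces a $\psi$-decreasing deformation pushing $\{\psi \le b\}$ into $\{\psi \le a\}$ while fixing $\{\psi \le a\}$ pointwise. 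Composing this deformation with a path $\hat\gamma \in \hat\Gamma$ satisfying $\max_t \vartheta(\hat\gamma(t)) < b$ yields an admissible path lying in $\{\psi \le \hat\lambda_1 + \delta_0\}$, contradicting the first step.

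The principal obstacle is the lower bound: one must extend the critical point machinery (Cerami condition, pseudogradient flow, Theorem \ref{th2}) to the $C^1$-Banach submanifold $M$, requiring a pseudogradient vector field tangent to $M$ and a verification of the Cerami condition for $\psi$. The essential analytic input is precisely the $(S)_+$-property of $A$ from Proposition \ref{prop7} combined with compactness of the trace map $\gamma_0$, which together ensure that Cerami sequences for $\psi$ admit strongly convergent subsequences in $M$.
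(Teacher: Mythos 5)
The paper does not prove Proposition~\ref{prop8}; it simply cites Mugnai \& Papageorgiou \cite{15} and Papageorgiou \& R\u adulescu \cite{18}, which adapt the Dirichlet argument of Cuesta, de Figueiredo \& Gossez \cite{5} to the Neumann/Robin setting. Your proposal reconstructs essentially that standard argument, and both halves are sound. For the upper bound, the three-piece path is correct: the central segment built from the disjointly supported parts $\hat v^\pm$ of a $\hat\lambda_2$-eigenfunction has $\vartheta\equiv\hat\lambda_2$ on $M$ since both $\vartheta(\cdot)$ and $\|\cdot\|_p^p$ split across $\hat v^+$ and $\hat v^-$; and for the lateral segments the pointwise ``hidden convexity'' estimate $|Dw_t|^p\le t|D\hat u_1|^p+(1-t)|D(\hat v^+/a)|^p$ is a direct consequence of H\"older's inequality applied to $w_t^{1-p}\bigl(t\hat u_1^{p-1}D\hat u_1+(1-t)(\hat v^+/a)^{p-1}D(\hat v^+/a)\bigr)$, while $\int_\Omega\xi w_t^p\,dz$ and $\int_{\partial\Omega}\beta w_t^p\,d\sigma$ are \emph{exactly} linear in $w_t^p=t\hat u_1^p+(1-t)(\hat v^+/a)^p$; the normalization $\|w_t\|_p=1$ is automatic for the same reason. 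So $\vartheta(w_t)\le t\hat\lambda_1+(1-t)\hat\lambda_2\le\hat\lambda_2$ as you claim (note $\hat u_1\in D_+$ guarantees $w_t>0$ for $t>0$, so the formula for $Dw_t$ makes sense a.e.).

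For the lower bound your structure is right and you correctly identify the genuine technical content: the Cerami condition and the deformation theorem must be run on the $C^1$ Finsler submanifold $M$, not on a Banach space as in Theorem~\ref{th2}. This is not a wrong step, but it is the place where the cited references actually do work, using Palais-type pseudogradient flows on $C^1$ Banach manifolds together with the compactness you already sketch (weak limit plus uniform convexity of $L^p$, or the $(S)_+$-property, upgrades $\psi(u_n)\to\hat\lambda_1$ to $u_n\to\pm\hat u_1$ in $W^{1,p}(\Omega)$); indeed the paper itself later asserts the Cerami condition for $\vartheta|_M$ at display (\ref{eq17}). One simplification worth noting: you do not need the full second deformation theorem for Step~2 — the weaker Noncritical Interval Theorem of Remark~\ref{rem3} (in its Finsler-manifold form) already deforms $\{\psi\le c^*+\varepsilon\}$ into $\{\psi\le\hat\lambda_1+\delta_0\}$ rel.\ the latter set, which is all the composition argument requires. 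With that manifold machinery granted, your argument is a complete proof of Proposition~\ref{prop8}.
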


Here we use the sequence of variational eigenvalues generated by the Ljusternik-Schni\-rel\-mann scheme when as index we use the Fadell-Rabinowitz cohomological index (see \cite{7}).

Finally let us recall some basic definitions and facts from critical groups which we will use in the sequel. So, let $(Y_1,Y_2)$ be a topological pair such that $Y_2\subseteq Y_1\subseteq X$. By $H_k(Y_1,Y_2)$, $k\in\NN_0$, we denote the $kth$ relative singular homology group with integer coefficients for the pair $(Y_1,Y_2)$. If $\varphi\in C^1(X,\RR)$ and $u\in K^c_{\varphi}$ is isolated, then the critical groups of $\varphi$ at $u$ are defined by
$$C_k(\varphi,u)=H_k(\varphi^c\cap U,\varphi^c\cap U\backslash\{0\})\ \mbox{for all}\ k\in\NN_0,$$
with $U$ being a neighborhood of $u$ such that $K_{\varphi}\cap \varphi^c\cap U=\{u\}$. The excision property of singular homology, implies that this definition is independent of the particular choice of the neighborhood $U$.

Suppose that $\varphi$ satisfies the $C$-condition and $\inf\varphi(K_{\varphi})>-\infty$. Let $c<\inf\varphi(K_{\varphi})$. The critical groups of $\varphi$ at infinity are defined by
$$C_k(\varphi,\infty)=H_k(X,\varphi^c)\ \mbox{for all}\ k\in\NN_0.$$

This definition is independent of the choice of the level $c<\inf\varphi(K_{\varphi})$. Indeed, if $c_0<c<\inf\varphi(K_{\varphi})$, then by the Noncritical Interval Theorem (see Remark \ref{rem3}), we have that
\begin{eqnarray*}
	&&\varphi^{c_0}\ \mbox{is a strong deformation retract of}\ \varphi^c,\\
	&\Rightarrow&H_k(X,\varphi^c)=H_k(X,\varphi^{c_0})\ \mbox{for all}\ k\in\NN_0,\\
	&&(\mbox{see Motreanu, Motreanu \& Papageorgiou \cite[p. 145]{14}}).
\end{eqnarray*}

We introduce the following quantities
\begin{eqnarray*}
	&&M(t,u)=\sum\limits_{k\geq 0}{\rm rank}\, C_k(\varphi,u)t^k\ \mbox{for all}\ t\in\RR,\ \mbox{all}\ u\in K_{\varphi},\\
	&&P(t,\infty)=\sum\limits_{k\geq 0}{\rm rank}\, C_k(\varphi,\infty)t^k\ \mbox{for all}\ t\in\RR\,.
\end{eqnarray*}

Then the ``Morse relation" says that
\begin{equation}\label{eq5}
	\sum\limits_{u\in K_{\varphi}}M(t,u)=P(t,\infty)+(1+t)Q(t)\ \mbox{for all}\ t\in\RR,
\end{equation}
with $Q(t)=\sum\limits_{k\geq 0}\beta_kt^k$ being a formal series in $t\in\RR$ with nonnegative integer coefficients $\beta_k$.

Now let
$$V_p=\{u\in W^{1,p}(\Omega):\int_{\Omega}\hat{u}_1^{p-1}udz=0\}.$$

We have the following direct sum decomposition
$$W^{1,p}(\Omega)=\RR\hat{u}_1\oplus V_p$$

We define
\begin{equation}\label{eq6}
	\hat{\lambda}(p)=\inf\left[\frac{\vartheta(u)}{||u||^p_p}:u\in V_p,u\neq 0\right].
\end{equation}
\begin{prop}\label{prop9}
	If hypotheses $H(\xi),H(\beta)$ hold, then $\hat{\lambda}_1<\hat{\lambda}(p)\leq \hat{\lambda}_2.$
\end{prop}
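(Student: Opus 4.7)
The plan is to establish the two inequalities separately: the upper bound $\hat{\lambda}(p) \leq \hat{\lambda}_2$ via the minimax formula of Proposition \ref{prop8}, and the strict lower bound $\hat{\lambda}_1 < \hat{\lambda}(p)$ via a direct-method argument that exploits the simplicity of $\hat{\lambda}_1$.

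For the upper bound, fix any path $\hat{\gamma}\in\hat{\Gamma}$ and consider the continuous function
$$\phi(t)=\int_{\Omega}\hat{u}_1^{p-1}\,\hat{\gamma}(t)\,dz,\qquad t\in[-1,1].$$
Since $\phi(-1)=-\|\hat{u}_1\|_p^p=-1$ and $\phi(1)=1$, the intermediate value theorem yields $t_0\in(-1,1)$ with $\phi(t_0)=0$. Then $\hat{\gamma}(t_0)\in V_p$ with $\|\hat{\gamma}(t_0)\|_p=1$, so the definition (\ref{eq6}) of $\hat{\lambda}(p)$ gives $\vartheta(\hat{\gamma}(t_0))\geq\hat{\lambda}(p)$. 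Taking the maximum over $t$ and the infimum over $\hat{\gamma}\in\hat{\Gamma}$, Proposition \ref{prop8} produces $\hat{\lambda}_2\geq\hat{\lambda}(p)$.

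The non-strict inequality $\hat{\lambda}_1\leq\hat{\lambda}(p)$ is immediate from (\ref{eq3}) because $V_p\setminus\{0\}\subseteq W^{1,p}(\Omega)\setminus\{0\}$. For strictness I would argue by contradiction: assume $\hat{\lambda}_1=\hat{\lambda}(p)$ and take a minimizing sequence $\{u_n\}\subset V_p$ with $\|u_n\|_p=1$ and $\vartheta(u_n)\to\hat{\lambda}_1$. Because $\xi\in L^{\infty}(\Omega)$ and $\beta\geq 0$, one has $\|Du_n\|_p^p\leq\vartheta(u_n)+\|\xi\|_{\infty}$, so $\{u_n\}$ is bounded in $W^{1,p}(\Omega)$. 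Passing to a subsequence, $u_n\stackrel{w}{\rightarrow}u$ in $W^{1,p}(\Omega)$, while $u_n\to u$ in $L^p(\Omega)$ (compact Sobolev embedding) and in $L^p(\partial\Omega)$ (compact trace for $p>1$). These strong convergences give $\|u\|_p=1$ and, since $\hat{u}_1^{p-1}\in L^{p'}(\Omega)$, also $u\in V_p$.

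The final step is to combine weak lower semicontinuity of $\|D\cdot\|_p^p$ with strong convergence of the lower-order terms of $\vartheta$ (valid since $\xi\in L^{\infty}$ and $\beta\in C(\partial\Omega)$) to obtain $\vartheta(u)\leq\liminf_n\vartheta(u_n)=\hat{\lambda}_1$. Together with (\ref{eq3}) this forces $\vartheta(u)=\hat{\lambda}_1\|u\|_p^p$, so $u$ realizes the infimum in (\ref{eq3}) and is therefore an eigenfunction for $\hat{\lambda}_1$. By the simplicity of $\hat{\lambda}_1$ we get $u=\pm\hat{u}_1$, whence $\int_{\Omega}\hat{u}_1^{p-1}u\,dz=\pm 1\neq 0$, contradicting $u\in V_p$. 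The main technical obstacle is the direct-method step: ensuring, via the compactness of both the Sobolev embedding and the trace operator, that the weak limit $u$ still has unit $L^p$-norm, lies in $V_p$, and that $\vartheta$ is weakly lower semicontinuous on bounded sequences; once this is in place, the simplicity of $\hat{\lambda}_1$ delivers the contradiction at once.
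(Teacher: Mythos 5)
Your proof is correct and follows essentially the same route as the paper: the upper bound is obtained by locating a point on each admissible path in $V_p$ via the intermediate value theorem and then appealing to Proposition \ref{prop8}, while the strict lower bound is proved by contradiction via a minimizing sequence in $V_p$, compactness of the Sobolev embedding and trace, weak lower semicontinuity of $\vartheta$, and the simplicity of $\hat{\lambda}_1$. Your version is in fact slightly cleaner at the last step, since you note directly that $\|u\|_p=1$ forces $u=\pm\hat{u}_1$ (so $\int_\Omega\hat{u}_1^{p-1}u\,dz=\pm1\neq 0$), avoiding the paper's superfluous ``$\eta=0$'' case.
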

\begin{proof}
	Note that (\ref{eq3}) and (\ref{eq6}) imply that $\hat{\lambda}_1\leq\hat{\lambda}(p)$. Suppose that $\hat{\lambda}_1=\hat{\lambda}(p)$. Consider a sequence $\{u_n\}_{n\geq 1}\subseteq V_p$ such that
	\begin{equation}\label{eq7}
		||u_n||_p=1\ \mbox{for all}\ n\in\NN\ \mbox{and}\ \vartheta(u_n)\downarrow\hat{\lambda}(p)=\hat{\lambda}_1\ \mbox{as}\ n\rightarrow\infty.
	\end{equation}
	
	So, the sequence $\{u_n\}_{n\geq 1}\subseteq W^{1,p}(\Omega)$ is bounded and thus, by passing to a suitable subsequence if necessary, we may assume that
	\begin{equation}\label{eq8}
		u_n\stackrel{w}{\rightarrow}u\ \mbox{in}\ W^{1,p}(\Omega)\ \mbox{and}\ u_n\rightarrow u\ \mbox{in}\ L^p(\Omega)\ \mbox{and in}\ L^p(\partial\Omega).
	\end{equation}
	
	Since the functional $\vartheta(\cdot)$ is sequentially weakly lower semicontinuous, from (\ref{eq7}) and (\ref{eq8}) it follows that
	\begin{eqnarray}\label{eq9}
		&&\vartheta(u)\leq\hat{\lambda}(p)=\hat{\lambda}_1,\ ||u||_p=1,u\in V_p,\\
		&\Rightarrow&\vartheta(u)=\hat{\lambda}(p)=\hat{\lambda}_1\ (\mbox{see (\ref{eq6})}),\nonumber\\
		&\Rightarrow&u=\eta\hat{u}_1\ \mbox{with}\ \eta\in\RR\backslash\{0\}.\nonumber
	\end{eqnarray}
	
	If $\eta\neq 0$, then $u\not\in V_p$, a contradiction (see (\ref{eq8})).
	
	If $\eta=0$, then $u=0$, a contradiction since $||u||_p=1$ (see (\ref{eq8})).
	
	So, we have proved that $\hat{\lambda}_1<\hat{\lambda}(p).$
	
	Next we show that $\hat{\lambda}(p)\leq\hat{\lambda}_2$. Arguing by contradiction, suppose that $\hat{\lambda}_2<\hat{\lambda}(p)$. Then from Proposition \ref{prop8} we see that we can find $\hat{\gamma}_0\in\hat{\Gamma}$ such that
	\begin{equation}\label{eq10}
		\vartheta(\hat{\gamma}_0(t))<\hat{\lambda}(p)\ \mbox{for all}\ t\in[-1,1].
	\end{equation}
	
	Let $\tau:[-1,1]\rightarrow\RR$ be defined by
	$$\tau(t)=\int_{\Omega}\hat{u}_1^{p-1}\hat{\gamma}_0(t)dz\ \mbox{for all}\ t\in[-1,1].$$
	
	Evidently $\tau(\cdot)$ is continuous and we have
	$$\tau(-1)=-1,\ \tau(1)=1\ (\mbox{recall}\ ||\hat{u}_1||_p=1).$$
	
	So, by Bolzano's theorem, we can find $t_0\in(-1,1)$ such that
	\begin{eqnarray}\label{eq11}
		&&\tau(t_0)=\int_{\Omega}\hat{u}_1^{p-1}\hat{\gamma}_0(t_0)dz=0,\nonumber\\
		&\Rightarrow&\hat{\gamma}_0(t_0)\in V_p\nonumber\\
		&\Rightarrow&\hat{\lambda}(p)\leq\vartheta(\hat{\gamma}_0(t_0))\ (\mbox{see (\ref{eq6})}).
	\end{eqnarray}
	
	Comparing (\ref{eq10}) and (\ref{eq11}), we reach a contradiction. Therefore we obtain
	$$\hat{\lambda}_1<\hat{\lambda}(p)\leq\hat{\lambda}_2.$$
The proof is now complete.
\end{proof}
\begin{remark}
	If $p=2$, then $\hat{\lambda}(2)=\hat{\lambda}_2.$
\end{remark}

Now let $\lambda>\hat{\lambda}_2,\ \lambda\not\in\sigma_0(p)$ and consider the $C^1$-functional $\psi_{\lambda}:W^{1,p}(\Omega)\rightarrow\RR$ defined by
$$\psi_{\lambda}(u)=\frac{1}{p}\vartheta(u)-\frac{\lambda}{p}||u||^p_p\ \mbox{for all}\ u\in W^{1,p}(\Omega).$$
\begin{prop}\label{prop10}
	If hypotheses $H(\xi),H(\beta)$ hold, then $C_0(\psi_{\lambda},0)=C_1(\psi_{\lambda},0)=0$.
\end{prop}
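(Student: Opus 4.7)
The plan is to exploit the $p$-homogeneity of $\psi_\lambda$ to reduce the computation of the critical groups at $0$ to a topological statement about a sublevel set on a sphere. First I would verify that $K_{\psi_\lambda}=\{0\}$: any $u$ with $\psi_\lambda'(u)=0$ is a weak solution of the Robin eigenvalue problem (\ref{eq2}) with $\hat\lambda=\lambda$, and since $\lambda\notin\sigma_0(p)$ this forces $u=0$. A standard argument using Proposition \ref{prop7} together with the $(p-1)$-homogeneity of $\psi_\lambda'$ (an unbounded $C_c$-sequence, once normalized, would converge to a nontrivial critical point) shows that $\psi_\lambda$ satisfies the $C_c$-condition at every level.

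By $p$-homogeneity, the set $A:=\psi_\lambda^0\cap\bar B_1$ is star-shaped with respect to $0$, hence contractible; simultaneously, the radial homotopy $(u,t)\mapsto\bigl((1-t)+t/\|u\|\bigr)u$ strongly deformation retracts $B:=A\setminus\{0\}$ onto $\psi_\lambda^0\cap\partial B_1$, which via $u\mapsto u/\|u\|_p$ is homeomorphic to $\tilde\vartheta^\lambda:=\{u\in M:\vartheta(u)\leq\lambda\}$. The long exact sequence of the pair $(A,B)$ combined with contractibility of $A$ then yields $C_0(\psi_\lambda,0)=0$ whenever $B\neq\emptyset$, and $C_1(\psi_\lambda,0)\cong\tilde H_0(\tilde\vartheta^\lambda)$. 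Since $\psi_\lambda(\hat u_1)=(\hat\lambda_1-\lambda)/p<0$ the set $B$ contains $\hat u_1$, so $C_0(\psi_\lambda,0)=0$ follows at once.

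For $C_1(\psi_\lambda,0)=0$ I need to prove that $\tilde\vartheta^\lambda$ is path-connected. Since $\lambda>\hat\lambda_2$, Proposition \ref{prop8} furnishes $\hat\gamma_0\in\hat\Gamma$ with $\max_t\vartheta(\hat\gamma_0(t))<\lambda$, providing a continuous path in $\tilde\vartheta^\lambda$ from $-\hat u_1$ to $\hat u_1$. To connect an arbitrary $u\in\tilde\vartheta^\lambda$ to this path I would apply the second deformation theorem (Theorem \ref{th2}) to $\vartheta|_M$ on successive noncritical intervals descending from $\lambda$ to $\hat\lambda_1$, so that $\tilde\vartheta^\lambda$ deformation retracts onto an arbitrarily small neighbourhood of $\{\pm\hat u_1\}$. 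The main obstacle, and the step requiring most care, is to verify that crossing each intermediate variational eigenvalue in $(\hat\lambda_2,\lambda)$ does not split $\tilde\vartheta^\lambda$ into several components; this rests on the fact that the higher variational eigenfunctions produced by the Lusternik--Schnirelmann--Fadell--Rabinowitz scheme are of linking type rather than isolated local minima of $\vartheta|_M$, so the sublevel set only gains cells of dimension $\geq 1$ as the level increases past such values.
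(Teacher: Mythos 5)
Your homological reduction is essentially the paper's: by $p$-homogeneity, $\psi_{\lambda}^0$ (equivalently your $A$) is contractible, $\psi_{\lambda}^0\setminus\{0\}$ deformation retracts onto a spherical sublevel set which, after radial $L^p$-normalization, is identified with $\tilde\vartheta^\lambda=\{u\in M:\vartheta(u)\le\lambda\}$ (and, via the second deformation theorem, with $D=\{u:\vartheta(u)<\lambda||u||_p^p\}$), and the long exact sequence of the pair reduces the whole proposition to the path-connectedness of this set, with $\hat u_1$ already sitting in it. So the only real content is the path-connectedness of $D$, and there your proposal has a genuine gap.

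You propose to run the second deformation theorem for $\vartheta|_M$ from level $\lambda$ down towards $\hat\lambda_1$ and to argue that crossing each intermediate eigenvalue only attaches cells of dimension $\ge 1$ because the associated eigenfunctions are ``of linking type rather than isolated local minima.'' This cannot be asserted. For $p\neq 2$ the functional $\vartheta|_M$ is not a Morse function: its critical points may be degenerate, there is no index theory available, and the critical values in $(\hat\lambda_1,\lambda)$ comprise the entire spectrum $\sigma_0(p)$ there, not merely the variational eigenvalues (which are not known to exhaust $\sigma_0(p)$). You therefore have no a priori control over $H_0$ of the sublevel sets as the level crosses a critical value, and the very claim that no eigenfunction in that range is an isolated local minimizer of $\vartheta|_M$ is precisely what would need to be proved; it is not a consequence of the Ljusternik--Schnirelmann construction.

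The paper sidesteps any deformation across critical levels. It fixes a path-component $E_u$ of $D$, sets $m_u=\inf\{\vartheta(v)/||v||_p^p:v\in E_u\}<\lambda$, applies the Ekeland variational principle on $\overline{E_u\cap M}$, and invokes Lemma 3.5(iii) of Cuesta, de Figueiredo \& Gossez to exclude accumulation on $\partial\overline{(E_u\cap M)}$ (where $\vartheta$ equals $\lambda$). Combined with the $C$-condition for $\vartheta|_M$, this produces a minimizer $y\in E_u\cap M$ solving the eigenvalue problem with multiplier $m_u$. If $y$ is signed then $y=\pm\hat u_1$ and one connects to $\hat u_1$ using Proposition \ref{prop8}; if $y$ is nodal, testing with $y^{\pm}$ gives $\vartheta(y^{\pm})=m_u||y^{\pm}||_p^p$, and the explicit path $e_t=(y^+-(1-t)y^-)/||y^+-(1-t)y^-||_p$ stays at the constant level $m_u<\lambda$, hence in $D$, and joins $y$ to $\hat u_1$. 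Some argument of this kind, rather than a naive flow past critical levels, is what is needed to close your sketch.
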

\begin{proof}
	Let $D=\{u\in W^{1,p}(\Omega):\vartheta(u)<\lambda||u||^p_p\}$. Evidently $D\subseteq W^{1,p}(\Omega)$ is open and $\pm\hat{u}_1\in D$.
	\begin{claim}
		The set $D$ is path-connected.
	\end{claim}
	
	Let $u\in D$ and let $E_u$ be the path-component of $D$ containing $u$. We set
	\begin{equation}\label{eq12}
		m_u=\inf\left[\frac{\vartheta(v)}{||v||^p_p}:v\in E_u\right]<\lambda .
	\end{equation}
	
	Let $\{v_n\}_{n\geq 1}\subseteq E_u$ be such that
	\begin{equation}\label{eq13}
		\frac{\vartheta(v_n)}{||v_n||^p_p}\downarrow m_u\ \mbox{as}\ n\rightarrow\infty.
	\end{equation}
	
	The $p$-homogenicity of $\vartheta(\cdot)$, allows us to assume that
	\begin{equation}\label{eq14}
		||v_n||_p=1\ \mbox{for all}\ n\in\NN.
	\end{equation}
	
	Then from (\ref{eq13}) and (\ref{eq14}) it follows that $\{v_n\}_{n\geq 1}\subseteq W^{1,p}(\Omega)$ is bounded. Recall that
	$$M=W^{1,p}(\Omega)\cap\partial B^{L^p}_1=\{u\in W^{1,p}(\Omega):||u||_p=1\}.$$
	
	Employing the Ekeland variational principle (see, for example, Gasinski \& Papageorgiou \cite[p. 579]{8}), we can find $\{y_n\}_{n\geq 1}\subset\overline{E_u\cap M}$ such that
	\begin{eqnarray}\label{eq15}
		\left\{\begin{array}{ll}
			\vartheta(y_n)\leq\vartheta(v_n)\leq m_u+\frac{1}{n^2},\ ||y_n-v_n||<\frac{1}{n},&\\
			\vartheta(y_n)\leq\vartheta(v)+\frac{1}{n^2}||v-y_n||\ \mbox{for all}\ v\in\overline{E_u\cap M},\ \mbox{all}\ n\in\NN.&
		\end{array}\right\}
	\end{eqnarray}
	
	Suppose that $y_n\in\partial\overline{(E_u\cap M)}$ for infinitely many $n\in\NN$ (to simplify things we assume that it holds for every $n\in\NN$). Then Lemma 3.5(iii) of Cuesta, de Figueiredo \& Gossez \cite{5} implies that
	$$\vartheta(y_n)=\lambda\leq\vartheta(v_n)\leq m_u+\frac{1}{n^2}<\lambda\ \mbox{for all}\ n\in\NN\ \mbox{big (see (\ref{eq15}))},$$
	a contradiction. This means that
	$$y_n\in E_u\cap M\ \mbox{for all}\ n\in\NN.$$
	
	Then from (\ref{eq15}) it follows that
	\begin{equation}\label{eq16}
		\left(\vartheta|_{M}\right)'(y_n)\rightarrow 0\ \mbox{as}\ n\rightarrow\infty .
	\end{equation}
	
	As in the proof of Proposition 5 of Papageorgiou \& R\u adulescu \cite{18} (see Claim 1 in that proof), we can see that
	\begin{equation}\label{eq17}
		\vartheta|_M\ \mbox{satisfies the C-condition}.
	\end{equation}
	
	Then from (\ref{eq16}), (\ref{eq17}) and by passing to a suitable subsequence if necessary, we see that we may assume that
	\begin{eqnarray}\label{eq18}
		&&y_n\rightarrow y\ \mbox{in}\ W^{1,p}(\Omega)\ \mbox{as}\ n\rightarrow \infty\\
		&\Rightarrow&y\in\overline{E_u\cap M}\ \mbox{and}\ \vartheta(y)=m_u<\lambda,\nonumber\\
		&\Rightarrow&y\in E_u\cap M\ (\mbox{as before using Lemma 3.5(iii) in Cuesta, de Figueiredo \& Gossez \cite{5}}).\nonumber
	\end{eqnarray}
	
	Hence to prove the claim, it suffices to connect $y$ and $\hat{u}_1$ with a path staying in $D$ (see Dugundji \cite[p. 115]{6}). First suppose that $y\leq 0$. Then $y=-\hat{u}_1$ (see (\ref{eq3})) and the desired path is provided by Proposition \ref{prop8} (recall $\lambda>\hat{\lambda}_2$). Now suppose that $y\geq 0$, then $y=\hat{u}_1$. Therefore, we may assume that
	$$y^+,\ y^-\neq 0.$$
	
	We set
	$$e_t=\frac{y^+-(1-t)y^-}{||y^+-(1-t)y^-||_p}\in M\ \mbox{for all}\ t\in[0,1].$$
	
	From (\ref{eq16}) and (\ref{eq18}), we have
	\begin{eqnarray}\label{eq19}
		&&\left\langle A(y),h\right\rangle+\int_{\Omega}\xi(z)|y|^{p-2}yhdz+\int_{\partial\Omega}\beta(z)|y|^{p-2}yhd\sigma=m_u\int_{\Omega}|y|^{p-2}yhdz\\
		&&\mbox{for all}\ h\in W^{1,p}(\Omega).\nonumber
	\end{eqnarray}
	
	In (\ref{eq19}) first we choose $h=y^+\in W^{1,p}(\Omega)$ and then we choose $h=-y^-\in W^{1,p}(\Omega)$. We obtain
	$$\vartheta(y^+)=m_u||y^+||^p_p\ \mbox{and}\ \vartheta(y^-)=m_u||y^-||^p_p.$$
	
	Since $y^+$ and $y^-$ have disjoint interior supports, it follows that
	\begin{eqnarray*}
		&&\vartheta(e_t)=m_u||e_t||^p_p=m_u\ \mbox{for all}\ t\in[0,1],\\
		&\Rightarrow&e_t\in D\ \mbox{for all}\ t\in[0,1].
	\end{eqnarray*}
	
	Note that
	$$e_0=\frac{y}{||y||_p}=y\ (\mbox{recall}\ y\in E_u\cap M)\ \mbox{and}\ e_1=\frac{y^+}{||y^+||_p}=\hat{u}_1.$$
	
	Therefore $t\mapsto e_t$ is the desired path in $D$. This proves the claim.
	
	If $e\in D$, then from the claim we have
	\begin{equation}\label{eq20}
		H_0(D,e)=0\ (\mbox{see Motreanu, Motreanu \& Papageorgiou \cite[p. 152]{14}}).
	\end{equation}
	
	Consider the $0$-sublevel set of $\psi_{\lambda}$
	$$\psi^0_{\lambda}=\{u\in W^{1,p}(\Omega):\psi_{\lambda}(u)\leq 0\}.$$
	
	Since $\psi_{\lambda}(\cdot)$ is $p$-homogeneous, it follows that
	\begin{eqnarray}\label{eq21}
		&&\psi^0_{\lambda}\ \mbox{is contractible},\nonumber\\
		&\Rightarrow&H_k(\psi^0_{\lambda},e)=0,\ \forall\, k\in\NN_0\ (\mbox{see Motreanu, Motreanu \& Papageorgiou \cite[p. 147]{14}}).
	\end{eqnarray}
	
	Let $\epsilon>0$ be small. Theorem \ref{th2} (the second deformation theorem) implies that
	\begin{equation}\label{eq22}
		\psi^0_{\lambda}\backslash\{0\}\ \mbox{and}\ \psi^{-\epsilon}_{\lambda}\ \mbox{are homotopy equivalent.}
	\end{equation}
	
	Also, let
	$$\dot{\psi}_{\lambda}^0=\{u\in W^{1,p}(\Omega):\psi_{\lambda}(u)<0\}=D.$$
	
	Since $K_{\psi_{\lambda}}=\{0\}$ (recall that $\lambda\not\in\sigma_0(p)$), from Granas \& Dugundji \cite[p. 407]{10},  we have
	\begin{equation}\label{eq23}
		\dot{\psi}_{\lambda}^0=D\ \mbox{and}\ \psi^{-\epsilon}_{\lambda}\ \mbox{are homotopy equivalent}.
	\end{equation}
	
	From (\ref{eq22}) and (\ref{eq23}) we infer that
	\begin{eqnarray}\label{eq24}
		&&\psi^0_{\lambda}\backslash\{0\}\ \mbox{and}\ D\ \mbox{are homotopy equivalent},\nonumber\\
		&\Rightarrow&H_k(\psi^0_{\lambda}\backslash\{0\},e)=H_k(D,e)\ \mbox{for all}\ k\in\NN_0,\nonumber\\
		&\Rightarrow&H_0(\psi^0_{\lambda}\backslash\{0\},e)=0\  (\mbox{see (\ref{eq20})}).
	\end{eqnarray}
	
	We consider the following long exact reduced singular homology sequence
	\begin{eqnarray}\label{eq25}
		&&\ldots\rightarrow H_k(\psi^0_{\lambda}\backslash\{0\},e)\rightarrow H_k(\psi^0_{\lambda},e)=0\xrightarrow{i_*}H_k(\psi^0_{\lambda},\psi^0_{\lambda}\backslash\{0\})=C_k(\psi_{\lambda},0)\nonumber\\
		&&\hspace{3cm}\xrightarrow{\partial_*}H_{k-1}(\psi^0_{\lambda}\backslash\{0\},e)\rightarrow\cdots
	\end{eqnarray}
	with $i_*$ being the group homomorphism corresponding to the inclusion map $i$ and $\partial_*$ is the boundary isomomorphism. From the exactness of (\ref{eq25}) we have that $\partial_*$ is a homomorphism between $C_1(\psi_{\lambda},0)$ and a subgroup of $H_0(\psi^0_{\lambda}\backslash\{0\},e)$ and so
	\begin{equation}\label{eq26}
		C_1(\psi_{\lambda},0)=0\ (\mbox{see (\ref{eq24})}).
	\end{equation}
	
	From (\ref{eq25}) and (\ref{eq26}) it follows that
	$$C_0(\psi_{\lambda},0)=0.$$
The proof is now complete.
\end{proof}

\section{Resonance at a Nonprincipal Eigenvalue}

In this section we prove two existence theorems when the equation is resonant with respect to a nontrivial eigenvalue $\hat{\lambda}_m$.

For the first existence theorem, the hypotheses on the reaction term are the following:

\smallskip
$H_1:$ $f:\Omega\times\RR\rightarrow\RR$ is a Carath\'eodory function such that $f(z,0)=0$ for almost all $z\in\Omega$ and
\begin{itemize}
	\item[(i)] for every $\rho>0$, there exists $a_{\rho}\in L^{\infty}(\Omega)_+$ such that
	$$|f(z,x)|\leq a_{\rho}(z)\ \mbox{for almost all}\ z\in\Omega,\ \mbox{all}\ |x|\leq\rho;$$
	\item[(ii)] there exists $m\in\NN,\ m\geq 2$ such that
	$$\lim\limits_{x\rightarrow\pm\infty}\frac{f(z,x)}{|x|^{p-2}x}=\hat{\lambda}_m\ \mbox{uniformly for almost all}\ z\in\Omega;$$
	\item[(iii)] if $F(z,x)=\int^x_0f(z,s)ds$, then $\lim\limits_{x\rightarrow\pm\infty}[f(z,x)x-pF(z,x)]=+\infty$ uniformly for almost all $z\in\Omega$;
	\item[(iv)] there exist $\eta\in L^{\infty}(\Omega),\ \hat{\lambda}_1\leq\eta(z)$ for almost all $z\in\Omega$, $\eta\not\equiv\hat{\lambda}_1,\ \hat{\eta}<\hat{\lambda}(p)$ and $\delta>0$ such that
	$$\frac{1}{p}\eta(z)|x|^p\leq F(z,x)\leq\frac{1}{p}\hat{\eta}|x|^p\ \mbox{for almost all}\ z\in\Omega,\ \mbox{all}\ |x|\leq\delta.$$
\end{itemize}

Let $\varphi:W^{1,p}(\Omega)\rightarrow\RR$ be the energy (Euler) functional for problem (\ref{eq1}) defined by
$$\varphi(u)=\frac{1}{p}\vartheta(u)-\int_{\Omega}F(z,u)dz\ \mbox{for all}\ u\in W^{1,p}(\Omega).$$

Evidently $\varphi\in C^1(W^{1,p}(\Omega))$.
\begin{prop}\label{prop11}
	If hypotheses $H(\xi),H(\beta),H_1$ hold, then the functional $\varphi$ satisfies the C-condition.
\end{prop}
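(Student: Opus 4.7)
The plan is the standard two-stage argument for Cerami sequences in resonant problems: first show boundedness using the Landesman--Lazer type hypothesis $H_1(iii)$, then extract a strongly convergent subsequence using the $(S)_+$-property of $A$.

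Let $\{u_n\}_{n\geq 1}\subseteq W^{1,p}(\Omega)$ satisfy $\varphi(u_n)\to c$ and $(1+||u_n||)\varphi'(u_n)\to 0$. For boundedness I argue by contradiction and assume $||u_n||\to\infty$. Set $y_n=u_n/||u_n||$, so $||y_n||=1$, and pass to a subsequence with $y_n\stackrel{w}{\rightharpoonup}y$ in $W^{1,p}(\Omega)$, $y_n\to y$ in $L^p(\Omega)$ and in $L^p(\partial\Omega)$. Writing out the Cerami condition against a test function $h$, dividing by $||u_n||^{p-1}$, and using the asymptotic identity in $H_1(ii)$ together with the growth estimate (which follows from $H_1(i)$--$(ii)$) to conclude $f(z,u_n)/||u_n||^{p-1}\to \hat{\lambda}_m|y|^{p-2}y$ in $L^{p'}(\Omega)$ (Vitali), I then choose $h=y_n-y$ and use Proposition \ref{prop7} to upgrade to $y_n\to y$ in $W^{1,p}(\Omega)$. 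Hence $||y||=1$ and, passing to the limit, $y$ solves the eigenvalue problem (\ref{eq2}) with $\hat{\lambda}=\hat{\lambda}_m$. In particular $y\not\equiv 0$, so the set $\{y\neq 0\}$ has positive Lebesgue measure, and on this set $|u_n(z)|\to+\infty$.

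Now I use $H_1(iii)$. Computing
\[
p\varphi(u_n)-\langle\varphi'(u_n),u_n\rangle=\int_\Omega\bigl[f(z,u_n)u_n-pF(z,u_n)\bigr]dz,
\]
and noting that the left-hand side converges to $pc\in\RR$ (since $\langle\varphi'(u_n),u_n\rangle\to 0$ by the Cerami weight), I get a finite limit. On the other hand, hypotheses $H_1(i)$--$(iii)$ yield a uniform lower bound $f(z,x)x-pF(z,x)\geq -M$ for a.a.\ $z\in\Omega$ and all $x\in\RR$; combined with $H_1(iii)$ and the fact that $|u_n(z)|\to\infty$ on $\{y\neq 0\}$, Fatou's lemma gives
\[
\liminf_{n\to\infty}\int_\Omega\bigl[f(z,u_n)u_n-pF(z,u_n)\bigr]dz=+\infty,
\]
contradicting the finite limit $pc$. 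Therefore $\{u_n\}_{n\geq 1}\subseteq W^{1,p}(\Omega)$ is bounded.

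Having boundedness, pass to a further subsequence with $u_n\stackrel{w}{\rightharpoonup}u$ in $W^{1,p}(\Omega)$ and $u_n\to u$ in $L^p(\Omega)$, $L^p(\partial\Omega)$ and in $L^r(\Omega)$ for any $r<p^*$. Testing $\langle\varphi'(u_n),u_n-u\rangle\to 0$ with the boundary, potential, and reaction integrals all vanishing by strong $L^p$-convergence and the growth bound in $H_1(i)$--$(ii)$, I obtain $\langle A(u_n),u_n-u\rangle\to 0$, and Proposition \ref{prop7} yields $u_n\to u$ in $W^{1,p}(\Omega)$.

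The main obstacle is the contradiction step: one must (a) establish the uniform lower bound $f(z,x)x-pF(z,x)\geq -M$ so that Fatou applies, and (b) verify that $y\neq 0$ really gives a set of positive measure with $|u_n|\to\infty$ (which is immediate from $||y||=1$ after the $(S)_+$ upgrade, but requires that strong convergence $y_n\to y$). Once these are in hand, the Landesman--Lazer condition $H_1(iii)$ directly forces the contradiction, and the rest is routine.
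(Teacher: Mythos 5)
Your argument is correct and follows essentially the same route as the paper: contradiction via the normalized sequence, the $(S)_+$ upgrade to get $\|y\|=1$, and then the Landesman--Lazer-type hypothesis $H_1(iii)$ together with the uniform lower bound and Fatou to force $\int_\Omega[f(z,u_n)u_n-pF(z,u_n)]\,dz\to+\infty$, contradicting the finite bound coming from the Cerami condition. The only cosmetic differences are that you use the single identity $p\varphi(u_n)-\langle\varphi'(u_n),u_n\rangle\to pc$ where the paper adds two separate inequalities, and you pass to the limit to identify $y$ as an eigenfunction, which is unnecessary once $\|y\|=1$ is known.
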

\begin{proof}
	Let $\{u_n\}_{n\geq 1}\subseteq W^{1,p}(\Omega)$ be a sequence such that
	\begin{eqnarray}
		&&|\varphi(u_n)|\leq M_1\ \mbox{for some}\ M_1>0,\ \mbox{all}\ n\in\NN,\label{eq27}\\
		&&(1+||u_n||)\varphi'(u_n)\rightarrow 0\ \mbox{in}\ W^{1,p}(\Omega)^*\ \mbox{as}\ n\rightarrow\infty\,.\label{eq28}
	\end{eqnarray}
	
	From (\ref{eq28}) we have
	\begin{eqnarray}\label{eq29}
		&&\left|\left\langle A(u_n),h\right\rangle+\int_{\Omega}\xi(z)|u_n|^{p-2}u_nhdz+\int_{\partial\Omega}\beta(z)|u_n|^{p-2}u_nhd\sigma-\int_{\Omega}f(z,u_n)hdz\right|\nonumber\\
		&&\leq\frac{\epsilon_n||h||}{1+||u_n||}\ \mbox{for all}\ h\in W^{1,p}(\Omega),\ \mbox{with}\ \epsilon_n\rightarrow 0^+.
	\end{eqnarray}
	
	In (\ref{eq29}) we choose $h=u_n\in W^{1,p}(\Omega)$. Then
	\begin{equation}\label{eq30}
		-\vartheta(u_n)+\int_{\Omega}f(z,u_n)u_ndz\leq\epsilon_n\ \mbox{for all}\ n\in\NN.
	\end{equation}
	
	From (\ref{eq27}) we have
	\begin{equation}\label{eq31}
		\vartheta(u_n)-\int_{\Omega}pF(z,u_n)dz\leq pM_1\ \mbox{for all}\ n\in\NN.
	\end{equation}
	
	Adding (\ref{eq30}) and (\ref{eq31}), we obtain
	\begin{equation}\label{eq32}
		\int_{\Omega}[f(z,u_n)u_n-pF(z,u_n)]dz\leq M_2\ \mbox{for some}\ M_2>0,\ \mbox{all}\ n\in\NN.
	\end{equation}
	\begin{claim}
		$\{u_n\}_{n\geq 1}\subseteq W^{1,p}(\Omega)$ is bounded.
	\end{claim}
	
	Arguing by contradiction, suppose that the claim is not true. By passing to a suitable subsequence if necessary, we can say that
	\begin{equation}\label{eq33}
		||u_n||\rightarrow\infty.
	\end{equation}
	
	Let $y_n=\frac{u_n}{||u_n||}$ for all $n\in\NN$. Then $||y_n||=1$ and so we may assume that
	\begin{equation}\label{eq34}
		y_n\stackrel{w}{\rightarrow}y\ \mbox{in}\ W^{1,p}(\Omega)\ \mbox{and}\ y_n\rightarrow y\ \mbox{in}\ L^p(\Omega)\ \mbox{and in}\ L^p(\partial\Omega).
	\end{equation}
	
	From (\ref{eq29}) we have
	\begin{eqnarray}\label{eq35}
		&&\left|\left\langle A(y_n),h\right\rangle+\int_{\Omega}\xi(z)|y_n|^{p-2}y_nhdz+\int_{\partial\Omega}\beta(z)|y_n|^{p-2}y_nhd\sigma-\int_{\Omega}\frac{f(z,u_n)}{||u_n||^{p-1}}hdz\right|\\
		&&\leq\frac{\epsilon_n||h||}{(1+||u_n||)||u_n||^{p-1}}\ \mbox{for all}\ n\in\NN\nonumber.
	\end{eqnarray}
	
	From hypotheses $H_1(i),(ii)$ we see that
	\begin{eqnarray}\label{eq36}
		&&|f(z,x)|\leq c_1(1+|x|^{p-1})\ \mbox{for almost all}\ z\in\Omega,\ \mbox{all}\ x\in\RR,\ \mbox{with}\ c_1>0,\nonumber\\
		&\Rightarrow&\left\{\frac{f(\cdot,u_n(\cdot))}{||u_n||^{p-1}}\right\}_{n\geq 1}\subseteq L^{p'}(\Omega)\left(\frac{1}{p}+\frac{1}{p'}=1\right)\ \mbox{is bounded}.
	\end{eqnarray}
	
	In (\ref{eq35}) we choose $h=y_n-y\in W^{1,p}(\Omega)$, pass to the limit as $n\rightarrow\infty$ and use (\ref{eq33}), (\ref{eq34}), (\ref{eq36}). We obtain
	\begin{eqnarray}\label{eq37}
		&&\lim\limits_{n\rightarrow\infty}\left\langle A(y_n),y_n-y\right\rangle=0,\nonumber\\
		&\Rightarrow&y_n\rightarrow y\ \mbox{in}\ W^{1,p}(\Omega)\ \mbox{(see Proposition \ref{prop7}) and so}\ ||y||=1.
	\end{eqnarray}
	
	From (\ref{eq37}) we see that $y\neq 0$. Let $E=\{z\in\Omega:y(z)\neq 0\}$. If by $|\cdot|_N$ we denote the Lebesgue measure on $\RR^N$, then $|E|_N>0$. We have
	\begin{eqnarray}\label{eq38}
		&&|u_n(z)|\rightarrow+\infty\ \mbox{for almost all}\ z\in E,\nonumber\\
		&\Rightarrow&f(z,u_n(z))u_n(z)-pF(z,u_n(z))\rightarrow+\infty\ \mbox{for almost all}\ z\in\Omega\\
		&&(\mbox{see hypothesis}\ H_1(iii)).\nonumber
	\end{eqnarray}
	
	From (\ref{eq38}), hypothesis $H_1(iii)$ and Fatou's lemma, we have
	\begin{equation}\label{eq39}
		\int_E[f(z,u_n)u_n-pF(z,u_n)]dz\rightarrow+\infty\ \mbox{as}\ n\rightarrow\infty.
	\end{equation}
	
	On the other hand, hypotheses $H_1(i),(iii)$ imply that we can find $c_2>0$ such that
	\begin{equation}\label{eq40}
		-c_2\leq f(z,x)x-pF(z,x)\ \mbox{for almost all}\ z\in\Omega,\ \mbox{all}\ x\in\RR.
	\end{equation}
	
	Then we have
	\begin{eqnarray*}
		&&\int_{\Omega}[f(z,u_n)u_n-pF(z,u_n)]dz\\
		&=&\int_E[f(z,u_n)u_n-pF(z,u_n)]dz+\int_{\Omega\backslash E}[f(z,u_n)u_n-pF(z,u_n)]dz\\
		&\geq&\int_{E}[f(z,u_n)u_n-pF(z,u_n)]dz-c_2|\Omega|_N\ (\mbox{see (\ref{eq40})}),\\
		\Rightarrow&&\int_{\Omega}[f(z,u_n)u_n-pF(z,u_n)]dz\rightarrow+\infty\ \mbox{as}\ n\rightarrow\infty\ (\mbox{see (\ref{eq39})}).
	\end{eqnarray*}
	
	This contradicts (\ref{eq32}). So, we have proved the claim.
	
	Because of the claim, at least for a subsequence, we may assume that
	\begin{equation}\label{eq41}
		u_n\stackrel{w}{\rightarrow}u\ \mbox{in}\ W^{1,p}(\Omega)\ \mbox{and}\ u_n\rightarrow u\ \mbox{in}\ L^p(\Omega)\ \mbox{and}\ L^p(\partial\Omega).
	\end{equation}
	
	Note that $\{f(\cdot,u_n(\cdot))\}_{n\geq 1}\subseteq L^{p'}(\Omega)$ is bounded. So, if in (\ref{eq29}) we choose $h=u_n-u\in W^{1,p}(\Omega)$, pass to the limit as $n\rightarrow\infty$ and use (\ref{eq41}), then
	\begin{eqnarray*}
		&&\lim\limits_{n\rightarrow\infty}\left\langle A(u_n),u_n-u\right\rangle=0,\\
		&\Rightarrow&u_n\rightarrow u\ \mbox{in}\ W^{1,p}(\Omega)\ (\mbox{see Proposition \ref{prop7}}),\\
		&\Rightarrow&\varphi\ \mbox{satisfies the C-condition}.
	\end{eqnarray*}
The proof is now complete.
\end{proof}

\begin{prop}\label{prop12}
	If hypotheses $H(\xi),H(\beta),H_1$ hold, then $C_1(\varphi,0)\neq 0$.
\end{prop}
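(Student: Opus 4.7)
The strategy is to verify that $\varphi$ admits a local linking at the origin with respect to the direct sum decomposition $W^{1,p}(\Omega)=\mathbb{R}\hat u_1\oplus V_p$ from Proposition \ref{prop9}, and then appeal to the standard critical group computation for local linking (see Motreanu, Motreanu \& Papageorgiou \cite{14}), which yields $C_{\dim\mathbb{R}\hat u_1}(\varphi,0)=C_1(\varphi,0)\neq 0$. Note that $0$ is indeed a critical point of $\varphi$, since $f(\cdot,0)\equiv 0$.

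For the one-dimensional summand, because $\hat u_1\in D_+\subseteq C(\overline\Omega)$, for all $|t|\leq\delta/\|\hat u_1\|_\infty$ the lower bound in $H_1(iv)$, combined with $\|\hat u_1\|_p=1$ and $\vartheta(\hat u_1)=\hat\lambda_1$, yields
$$
\varphi(t\hat u_1)\leq\frac{|t|^p}{p}\Bigl(\hat\lambda_1-\int_\Omega\eta(z)\hat u_1(z)^p\,dz\Bigr)\leq -c_0|t|^p<0,
$$
since $\eta(z)\geq\hat\lambda_1$ a.e., $\eta\not\equiv\hat\lambda_1$, and $\hat u_1>0$ on $\overline\Omega$. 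For the complementary summand, the upper bound in $H_1(iv)$ on $\{|v|\leq\delta\}$ combined with the global growth $|F(z,x)|\leq c_1|x|^p$ for $|x|\geq\delta$ (derived from $H_1(i),(ii)$), the inequality $|x|^p\leq\delta^{-\varepsilon}|x|^{p+\varepsilon}$ on $\{|x|>\delta\}$, and the Sobolev embedding $W^{1,p}(\Omega)\hookrightarrow L^{p+\varepsilon}(\Omega)$ for some small $\varepsilon>0$, lead to
$$
\int_\Omega F(z,v)\,dz\leq\frac{\hat\eta}{p}\|v\|_p^p+C_1\|v\|^{p+\varepsilon}.
$$
Combined with the coercivity estimate $\vartheta(v)-\hat\eta\|v\|_p^p\geq c_*\|v\|^p$ for $v\in V_p$ (discussed below), this produces $\varphi(v)\geq\frac{c_*}{p}\|v\|^p-C_1\|v\|^{p+\varepsilon}\geq 0$ for $v\in V_p$ with $\|v\|\leq\rho$, some $\rho>0$.

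The main obstacle is proving the coercivity estimate on $V_p$, since Proposition \ref{prop9} only provides a comparison against $\|\cdot\|_p^p$, whereas in order to dominate the $\|v\|^{p+\varepsilon}$ error one needs a bound involving the full $W^{1,p}$ norm. I would argue by contradiction: suppose there exist $v_n\in V_p$ with $\|v_n\|=1$ and $\vartheta(v_n)-\hat\eta\|v_n\|_p^p\to c\leq 0$. Passing to a subsequence, $v_n\rightharpoonup v$ in $W^{1,p}(\Omega)$ and $v_n\to v$ in $L^p(\Omega)$ and $L^p(\partial\Omega)$. Since $V_p$ is weakly closed and $\vartheta$ is sequentially weakly lower semicontinuous, $v\in V_p$ and $\vartheta(v)\leq\hat\eta\|v\|_p^p+c\leq\hat\eta\|v\|_p^p$. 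If $v\neq 0$, Proposition \ref{prop9} yields $\vartheta(v)\geq\hat\lambda(p)\|v\|_p^p>\hat\eta\|v\|_p^p$, a contradiction. If $v=0$, then $\|v_n\|_p\to 0$ and the strong convergences force $\int_\Omega\xi|v_n|^p\,dz\to 0$ and $\int_{\partial\Omega}\beta|v_n|^p\,d\sigma\to 0$, hence $\|Dv_n\|_p^p=\vartheta(v_n)-o(1)\to c\leq 0$, giving $\|Dv_n\|_p\to 0$ and then $\|v_n\|\to 0$, contradicting $\|v_n\|=1$. Hence $c_*>0$, the local linking configuration is established, and $C_1(\varphi,0)\neq 0$.
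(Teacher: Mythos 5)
Your proof is correct and follows essentially the same route as the paper: establish a local linking of $\varphi$ at the origin with respect to the decomposition $W^{1,p}(\Omega)=\mathbb R\hat u_1\oplus V_p$ (negative sign of $\varphi(t\hat u_1)$ from the lower bound in $H_1(iv)$, nonnegativity of $\varphi$ on a small ball in $V_p$ from the upper bound together with $\hat\eta<\hat\lambda(p)$), then invoke the standard critical-group consequence of local linking from Motreanu, Motreanu \& Papageorgiou. Two small remarks: (1) the paper simply asserts the coercivity $\vartheta(v)-\hat\eta\|v\|_p^p\geq c_*\|v\|^p$ on $V_p$ with the parenthetical ``recall that $\hat\eta<\hat\lambda(p)$'', while you supply the compactness-by-contradiction argument for it (a Lemma~\ref{lem15}-type argument restricted to $V_p$) — this is a worthwhile filling-in of an implicit step; and (2) your invocation of ``Proposition~\ref{prop9}'' in the case $v\neq 0$ should really be a reference to the definition \eqref{eq6} of $\hat\lambda(p)$, from which $\vartheta(v)\geq\hat\lambda(p)\|v\|_p^p$ for $v\in V_p$ follows directly; Proposition~\ref{prop9} only orders $\hat\lambda_1<\hat\lambda(p)\leq\hat\lambda_2$. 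Using $p+\varepsilon$ instead of an arbitrary $r\in(p,p^*)$ is a purely cosmetic reparametrization of the paper's estimate \eqref{eq44}.
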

\begin{proof}
	We consider the direct sum decomposition
	$$W^{1,p}(\Omega)=\RR\hat{u}_1\oplus V_p.$$
	
	For $|t|\in(0,1)$ small we have
	\begin{equation}\label{eq42}
		|t|\hat{u}_1(z)\in\left(0,\delta\right]\ \mbox{for all}\ z\in\overline{\Omega}\ (\mbox{recall}\ \hat{u}_1\in D_+).
	\end{equation}
	
	Here $\delta>0$ is as in hypothesis $H_1(iv)$. Then
	\begin{eqnarray}\label{eq43}
		\varphi(t\hat{u}_1)&=&\frac{|t|^p}{p}\vartheta(\hat{u}_1)-\int_{\Omega}F(z,t\hat{u}_1)dz\nonumber\\
		&\leq&\frac{|t|^p}{p}\left[\vartheta(\hat{u}_1)-\int_{\Omega}\eta(z)\hat{u}^p_1dz\right]\nonumber\\
		&&(\mbox{see hypothesis}\ H_1(iv)\ \mbox{and recall that}\ ||\hat{u}_1||_p=1)\nonumber\\
		&=&\frac{|t|^p}{p}\int_{\Omega}[\hat{\lambda}_1-\eta(z)]\hat{u}^p_1dz\nonumber\\
		&<&0\ \mbox{for all}\ |t|\in(0,1)\ \mbox{small}\ (\mbox{recall that}\ \hat{u}_1\in D_+).
	\end{eqnarray}
	
	Hypotheses $H_1$ imply that given $r\in(p,p^*)$, we can find $c_3=c_3(r)>0$ such that
	\begin{equation}\label{eq44}
		F(z,x)\leq\frac{\hat{\eta}}{p}|x|^p+c_3|x|^r\ \mbox{for almost all}\ z\in\Omega,\ \mbox{all}\ x\in\RR.
	\end{equation}
	
	Then for all $v\in V_p$, we have
	\begin{eqnarray*}
		\varphi(v)&=&\frac{1}{p}\vartheta(v)-\int_{\Omega}F(z,v)dz\\
		&\geq&\frac{1}{p}[\vartheta(v)-\hat{\eta}||v||^p_p]-c_4||v||^r\ \mbox{for some}\ c_4>0\ (\mbox{see (\ref{eq44})})\\
		&\geq&c_5||v||^p-c_4||v||^r\ \mbox{for some}\ c_5>0\ (\mbox{recall that}\ \hat{\eta}<\hat{\lambda}(p)).
	\end{eqnarray*}
	
	Since $p<r$, we can find $\delta_1\in(0,1)$ small such that
	\begin{equation}\label{eq45}
		\varphi(v)>0=\varphi(0)\ \mbox{for all}\ 0<||v||\leq\delta_1.
	\end{equation}
	
	Relations (\ref{eq43}) and (\ref{eq45}) imply that $\varphi$ has a local linking at the origin with respect to the decomposition $\RR\hat{u}_1\oplus V_p$. So, Corollary 6.88 of Motreanu, Motreanu \& Papageorgiou \cite[p. 172]{14} implies that
	$C_1(\varphi,0)\neq 0.$
\end{proof}
\begin{prop}\label{prop13}
	If hypotheses $H(\xi),H(\beta),H_1$ hold, then $C_0(\varphi,\infty)=C_1(\varphi,\infty)=0$ and $C_m(\varphi,\infty)\neq 0$.
\end{prop}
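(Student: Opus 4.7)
The overall plan is to work with the identity $C_k(\varphi,\infty)=H_k(W^{1,p}(\Omega),\varphi^c)$ for $c<\inf\varphi(K_\varphi)$, which is available since $\varphi$ satisfies the Cerami condition (Proposition~\ref{prop11}). Contractibility of $W^{1,p}(\Omega)$ and the long exact sequence of the pair reduce the problem to computing the reduced homology of sublevel sets $\varphi^c$.

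First I would establish that $\varphi$ is unbounded below. Evaluating along the principal eigenfunction ray $t\hat u_1$ and using $H_1(ii)$ (so $F(z,x)/|x|^p\to\hat\lambda_m/p$ uniformly with the quadratic domination $|F|\le c_1(1+|x|^p)$), dominated convergence gives $t^{-p}\int_\Omega F(z,t\hat u_1)\,dz\to \hat\lambda_m/p$, and hence $t^{-p}\varphi(t\hat u_1)\to (\hat\lambda_1-\hat\lambda_m)/p<0$ as $t\to+\infty$. So $\varphi^c$ is nonempty for every $c$, and as $W^{1,p}(\Omega)$ is path-connected, $H_0(W^{1,p}(\Omega),\varphi^c)=0$. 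For $C_1(\varphi,\infty)=0$ it suffices to show that $\varphi^c$ is path-connected for sufficiently negative $c$; this mirrors the Claim inside Proposition~\ref{prop10}. For $v\in M$ with $\vartheta(v)<\hat\lambda_m$ the same asymptotic computation yields $t^{-p}\varphi(tv)\to(\vartheta(v)-\hat\lambda_m)/p<0$, so a large-$t$ radial ray through such $v$ sits inside $\varphi^c$. Given any two points of $\varphi^c$, the Cerami deformation flow pushes them onto such rays, which can then be joined angularly through $\pm\hat u_1$ via a path in $\{v\in M:\vartheta(v)<\hat\lambda_m\}$ supplied by Proposition~\ref{prop8} (whose minimax characterization produces paths from $-\hat u_1$ to $\hat u_1$ with $\max\vartheta$ close to $\hat\lambda_2\le\hat\lambda_m$).

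For $C_m(\varphi,\infty)\ne 0$ I would appeal to the Fadell--Rabinowitz cohomological index $i$ combined with the variational characterization of $\hat\lambda_m$. The sublevel $B=\{u\in M:\vartheta(u)\le\hat\lambda_m\}$ is symmetric with $i(B)\ge m$. The aim is to produce a symmetric set $E\subseteq\varphi^c$ with $0\notin E$ and $i(E)\ge m$; by standard index theory this gives a nontrivial class in $\widetilde H_{m-1}(\varphi^c)$, and hence $C_m(\varphi,\infty)=H_m(W^{1,p}(\Omega),\varphi^c)\ne 0$ through the long exact sequence of the pair. The candidate is $E=TB'$ where $T>0$ is large and $B'$ is a $\mathbb Z_2$-equivariant perturbation of $B$ obtained by pushing the resonance boundary---the $\hat\lambda_m$-eigenfunction stratum inside $B$---slightly into the interior region $\{\vartheta<\hat\lambda_m\}$. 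On $B'$ the convergence $t^{-p}\varphi(tv)\to (\vartheta(v)-\hat\lambda_m)/p<0$ is uniform, so $\varphi|_{TB'}\le c$ once $T$ is large enough; the perturbation preserves $i(B')\ge m$ because it displaces only a set of FR-index strictly smaller than $m$.

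The main technical hurdle is the last step: by hypothesis $H_1(iii)$, on true $\hat\lambda_m$-eigenfunctions $u$ one in fact has $\varphi(tu)\to+\infty$ as $t\to+\infty$, so a naive radial dilation of $B$ does not land inside $\varphi^c$. The delicate point is designing the $\mathbb Z_2$-equivariant perturbation that removes this resonant contribution without lowering the Fadell--Rabinowitz index. This exploits that the resonance stratum is compact and carries index strictly less than $m$, together with the quantitative information in $H_1(iii)$ that controls how far the perturbation must push into $\{\vartheta<\hat\lambda_m\}$ before the computation $t^{-p}\varphi(tv)<0$ becomes uniform.
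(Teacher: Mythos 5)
Your plan diverges from the paper's proof, which does not attempt a direct computation on sublevel sets of $\varphi$. The paper picks $\lambda\in(\hat\lambda_m,\hat\lambda_{m+1})\setminus\sigma_0(p)$, forms the off-resonance model functional $\psi_\lambda(u)=\frac{1}{p}\vartheta(u)-\frac{\lambda}{p}\|u\|_p^p$, and connects $\varphi$ to $\psi_\lambda$ by the linear homotopy $h(t,u)=(1-t)\varphi(u)+t\psi_\lambda(u)$. The entire substance of its proof is the Claim verifying a uniform Cerami-type bound along the homotopy for deep levels, which licenses the homotopy invariance $C_k(\varphi,\infty)=C_k(\psi_\lambda,\infty)$. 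Since $K_{\psi_\lambda}=\{0\}$, the computation collapses to $C_k(\psi_\lambda,0)$: Proposition~\ref{prop10} gives $C_0=C_1=0$, and the Cingolani--Degiovanni homological linking result gives $C_m\neq 0$. The detour through $\psi_\lambda$ is precisely what lets them work with a $p$-homogeneous non-resonant functional.

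Your approach is not merely stylistically different; it runs into a gap you yourself flag and do not close. Because of $H_1(\mathrm{iii})$, on the resonant stratum (directions where $\vartheta(v)=\hat\lambda_m\|v\|_p^p$) one has $\varphi(tv)\to+\infty$, so the radial image $T\cdot\{v\in M:\vartheta(v)\le\hat\lambda_m\}$ does \emph{not} lie in $\varphi^c$. Your proposed fix --- a $\mathbb{Z}_2$-equivariant perturbation $B'$ that pushes the resonant boundary into $\{\vartheta<\hat\lambda_m\}$ while keeping $i(B')\ge m$ --- is asserted, not constructed. The justification ``it displaces only a set of FR-index strictly smaller than $m$'' does not follow from anything stated: the $\hat\lambda_m$-eigenfunction stratum need not have index $<m$ (when $\hat\lambda_m=\hat\lambda_{m+1}=\dots$ it can be large), and removing or deforming pieces of a symmetric set can and does drop the cohomological index in general. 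Moreover, even granting $i(E)\ge m$ for some symmetric $E\subset\varphi^c$, the inference to $\widetilde H_{m-1}(\varphi^c)\neq 0$ is not ``standard index theory'': the index controls equivariant cohomology, not singular homology of the ambient set, and what is actually needed (as in the paper) is a homological linking pair, which the Cingolani--Degiovanni theorem produces from an \emph{off-resonance} quadratic model, not from $\varphi$ itself. The path-connectedness argument for $\varphi^c$ has the same flavor of difficulty --- you are trying to rerun the argument of Proposition~\ref{prop10} on a non-homogeneous functional where the decisive Cuesta--de Figueiredo--Gossez lemma and the homogeneity of $\vartheta$ on $M$ are no longer directly available --- though it is less clearly fatal. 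The homotopy-to-$\psi_\lambda$ route used in the paper is exactly the device that sidesteps all of these resonance obstructions at once, and your proposal does not supply a substitute for it.
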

\begin{proof}
	Let $\lambda\in(\hat{\lambda}_m,\hat{\lambda}_{m+1})\backslash\sigma_0(p)$ and as before (see Section 2), let $\psi_{\lambda}:W^{1,p}(\Omega)\rightarrow \RR$ be the $C^1$-functional defined by
	$$\psi_{\lambda}(u)=\frac{1}{p}\vartheta(u)-\frac{\lambda}{p}||u||^p_p\ \mbox{for all}\ u\in W^{1,p}(\Omega).$$
	
	We consider the homotopy $h(t,u)$ defined by
	$$h(t,u)=(1-t)\varphi(u)+t\psi_{\lambda}(u)\ \mbox{for all}\ t\in[0,1],\ \mbox{all}\ u\in W^{1,p}(\Omega).$$
	\begin{claim}
		There exist $k_0\in\RR$ and $\delta_0>0$ such that
		$$h(t,u)\leq k_0\Rightarrow(1+||u||)||h'_u(t,u)||_*\geq\delta_0\ \mbox{for all}\ t\in[0,1].$$
	\end{claim}
	
		We argue indirectly. So, suppose that the claim is not true. Note that the homotopy $h(t,u)$ maps bounded sets into bounded sets. So, we can find $\{t_n\}_{n\geq 1}\subseteq[0,1]$ and $\{u_n\}_{n\geq 1}\subseteq W^{1,p}(\Omega)$ such that
		\begin{equation}\label{eq46}
			t_n\rightarrow t,||u_n||\rightarrow\infty,h(t_n,u_n)\rightarrow-\infty\ \mbox{and}\ (1+||u_n||)h'_u(t_n,u_n)\rightarrow 0\ \mbox{in}\ W^{1,p}(\Omega)^*.
		\end{equation}
		
		From the last convergence in (\ref{eq46}), we have
		\begin{eqnarray}\label{eq47}
			&&\left|\left\langle A(u_n),h\right\rangle+\int_{\Omega}\xi(z)|u_n|^{p-2}u_nhdz+\int_{\partial\Omega}\beta(z)|u_n|^{p-2}u_nhd\sigma-(1-t_n)\int_{\Omega}f(z,u_n)hdz-\right.\nonumber\\
			&&\quad\left.t_n\lambda\int_{\Omega}|u_n|^{p-2}u_nhdz\right|\leq\frac{\epsilon_n||h||}{1+||u_n||}
		\end{eqnarray}
		for all $h\in W^{1,p}(\Omega)$, with $\epsilon_n\rightarrow 0^+.$
		
		Let $y_n=\frac{u_n}{||u_n||},\ n\in\NN$. Then $||y_n||=1$ for all $n\in\NN$ and so we may assume that
		\begin{equation}\label{eq48}
			y_n\stackrel{w}{\rightarrow}y\ \mbox{in}\ W^{1,p}(\Omega)\ \mbox{and}\ y_n\rightarrow y\ \mbox{in}\ L^p(\Omega)\ \mbox{and}\ L^p(\partial\Omega).
		\end{equation}
		
		From (\ref{eq47}) we have
		\begin{eqnarray}\label{eq49}
			&&\left|\left\langle A(y_n),h\right\rangle+\int_{\Omega}\xi(z)|y_n|^{p-2}y_nhdz+\int_{\partial\Omega}\beta(z)|y_n|^{p-2}y_nhd\sigma-(1-t_n)\int_{\Omega}\frac{f(z,u_n)}{||u_n||^{p-1}}hdz-\right.\nonumber\\
			&&\quad\left.t_n\lambda\int_{\Omega}|y_n|^{p-2}y_nhdz\right|\leq\frac{\epsilon_n||h||}{1+||u_n||}\ \mbox{for all}\ n\in\NN\,.
		\end{eqnarray}
		
		Recall that
		\begin{equation}\label{eq50}
			\left\{\frac{f(\cdot,u_n(\cdot))}{||u_n||^{p-1}}\right\}_{n\geq 1}\subseteq L^{p'}(\Omega)\ \mbox{is bounded}\ (\mbox{see hypotheses}\ H_1(i),(ii)).
		\end{equation}
		
		Hypothesis $H_1(ii)$ and (\ref{eq50}) imply that at least for a subsequence, we have
		\begin{equation}\label{eq51}
			\frac{f(\cdot,u_n(\cdot))}{||u_n||^{p-1}}\stackrel{w}{\rightarrow}\hat{\lambda}_m|y|^{p-2}y\ \mbox{in}\ L^{p'}(\Omega)
		\end{equation}
		(see Aizicovici, Papageorgiou \& Staicu \cite{1}, proof of Proposition 30).
		
		In (\ref{eq49}) we choose $h=y_n-y\in W^{1,p}(\Omega)$, pass to the limit as $n\rightarrow\infty$ and use (\ref{eq48}), (\ref{eq51}). Then
		\begin{eqnarray}\label{eq52}
			&&\lim\limits_{n\rightarrow\infty}\left\langle A(y_n),y_n-y\right\rangle=0,\nonumber\\
			&\Rightarrow&y_n\rightarrow y\ \mbox{in}\ W^{1,p}(\Omega)\ \mbox{(see Proposition \ref{prop7}), hence}\ ||y||=1.
		\end{eqnarray}
		
		So, if in (\ref{eq49}) we pass to the limit as $n\rightarrow\infty$ and use (\ref{eq51}) and (\ref{eq52}), then
		\begin{eqnarray}\label{eq53}
			&&\left\langle A(y),h\right\rangle+\int_{\Omega}\xi(z)|y|^{p-2}yhdz+\int_{\partial\Omega}\beta(z)|y|^{p-2}yhd\sigma=\lambda_t\int_{\Omega}|y|^{p-2}yhdz\nonumber\\
			&&\mbox{for all}\ h\in W^{1,p}(\Omega)\ \mbox{with}\ \lambda_t=(1-t)\hat{\lambda}_m+t\lambda,\nonumber\\
			&\Rightarrow&-\Delta_py(z)+\xi(z)|y(z)|^{p-2}y(z)=\lambda_t|y(z)|^{p-2}y(z)\ \mbox{for almost all}\ z\in\Omega,\nonumber\\
			&&\frac{\partial y}{\partial n_p}+\beta(z)|y|^{p-2}y=0\ \mbox{on}\ \partial\Omega\ (\mbox{see Papageorgiou \& R\u adulescu \cite{18}}).
		\end{eqnarray}
		
		If $\lambda_t\not\in\sigma_0(p)$, then from (\ref{eq53}) it follows that
		$$y=0,\ \mbox{a contradiction (see (\ref{eq52}))}.$$
		
		So, suppose that $\lambda_t\in\sigma_0(p)$. If $D=\{z\in\Omega:y(z)\neq 0\}$, then from (\ref{eq52}) we see that
		$$|D|_N>0\ \mbox{and}\ |u_n(z)|\rightarrow\infty\ \mbox{for almost all}\ z\in D.$$
		
		Reasoning as in the proof of Proposition \ref{prop11}, we show that
		\begin{equation}\label{eq54}
			\int_{\Omega}[f(z,u_n)u_n-pF(z,u_n)]dz\rightarrow+\infty\ \mbox{as}\ n\rightarrow\infty.
		\end{equation}
		
		From the third convergence in (\ref{eq46}), we have
		\begin{equation}\label{eq55}
			\vartheta(u_n)-(1-t_n)\int_{\Omega}pF(z,u_n)dz-t_n\lambda||u_n||^p_p\leq -1\ \mbox{for all}\ n\geq n_0.
		\end{equation}
		
		In (\ref{eq47}) we choose $h=u_n\in W^{1,p}(\Omega)$. Then
		\begin{equation}\label{eq56}
			-\vartheta(u_n)+(1-t_n)\int_{\Omega}f(z,u_n)u_ndz+t_n\lambda||u_n||^p_p\leq\epsilon_n\ \mbox{for all}\ n\in\NN
		\end{equation}
		
		By choosing $n_0\in\NN$ even bigger if necessary, we can have
		$$\epsilon_n\in(0,1)\ \mbox{for all}\ n\geq n_0.$$
		
		Adding (\ref{eq55}) and (\ref{eq56}), we obtain
		\begin{equation}\label{eq57}
			(1-t_n)\int_{\Omega}[f(z,u_n)u_n-pF(z,u_n)]dz\leq 0\ \mbox{for all}\ n\geq n_0.
		\end{equation}
		
		We may assume that $t_n\in\left[0,1\right)$ for all $n\geq n_0$. Otherwise, there exists a subsequence $\{t_{n_k}\}_{k\geq 1}$ of $\{t_n\}_{n\geq 1}$ with $t_{n_k}=1$ for all $k\in\NN$. Hence $t=1$ and so $\lambda_t=\lambda\not\in\sigma_0(p)$, a contradiction (recall that we  have assumed that $\lambda_t\in\sigma_0(p)$). Therefore $t_n\in\left[0,1\right)$ for all $n\geq n_0$ and so from (\ref{eq57}), we have
		\begin{equation}\label{eq58}
			\int_{\Omega}[f(z,u_n)u_n-pF(z,u_n)]dz\leq 0\ \mbox{for all}\ n\geq n_0.
		\end{equation}
		
		Comparing (\ref{eq54}) and (\ref{eq58}) we have a contradiction. This proves the claim.
		
		Note that the above argument also shows that for every $t\in[0,1],\ h(t,\cdot)$ satisfies the C-condition. We apply Theorem 5.1.21 of Chang \cite[p. 334]{4} (see also  Liang \& Su \cite[Proposition 3.2]{12}) and have
		\begin{eqnarray}\label{eq59}
			&&C_k(h(0,\cdot),\infty)=C_k(h(1,\cdot),\infty)\ \mbox{for all}\ k\in\NN_0,\nonumber\\
			&\Rightarrow&C_k(\varphi,\infty)=C_k(\psi_{\lambda},\infty)\ \mbox{for all}\ k\in\NN_0.
		\end{eqnarray}
		
		Since $\lambda\not\in\sigma_0(p)$, we have
		\begin{eqnarray}\label{eq60}
			&&K_{\psi_{\lambda}}=\{0\}\nonumber,\\
			&\Rightarrow&C_k(\psi_{\lambda},\infty)=C_k(\psi_{\lambda},0)\ \mbox{for all}\ k\in\NN_0,\\
			&\Rightarrow&C_0(\psi_{\lambda},\infty)=C_1(\psi_{\lambda},\infty)=0\ (\mbox{see Proposition \ref{prop9}}),\nonumber\\
			&\Rightarrow&C_0(\varphi,\infty)=C_1(\varphi,\infty)=0\ (\mbox{see (\ref{eq59})}).\nonumber
		\end{eqnarray}
		
		Next we show that $C_m(\varphi,\infty)\neq 0$.
		
		We introduce the following two sets
		\begin{eqnarray*}
			&&G_r=\{u\in W^{1,p}(\Omega):\vartheta(u)<\lambda||u||^p_p,||u||=r\}\ (r>0),\\
			&&H=\{u\in W^{1,p}(\Omega):\vartheta(u)\geq \lambda||u||^p_p\}.
		\end{eqnarray*}
		
		There are symmetric sets and $G_r\cap H=\emptyset$. Also let
		$$\partial B_r=\{u\in W^{1,p}(\Omega):||u||=r\}.$$
		
		This is a Banach $C^1$-manifold and so locally contractible. The set $G_r\subseteq\partial B_r$ is open and so locally contractible too. The set $W^{1,p}(\Omega)\backslash H$ is open and of course locally contractible. By ${\rm ind}\,(\cdot)$ we denote the Fadell \& Rabinowitz \cite{7} cohomological index. Since $\lambda\in(\hat{\lambda}_m,\hat{\lambda}_{m+1})\backslash\sigma_0(p)$, we have
		$${\rm ind}\,G_r={\rm ind}\,(W^{1,p}(\Omega)\backslash H)=m.$$
		
		Theorem 3.6 of Cingolani \& Degiovanni \cite{3} implies that we can find $K\subseteq W^{1,p}(\Omega)$ compact such that $(G_r\cup K,G_r)$ homologically links $H$ in dimension $m\geq 2$ (see Motreanu, Motreanu \& Papageorgiou \cite[p. 167]{14}). So, Theorem 3.2 of \cite{3} says that
		\begin{eqnarray*}
			&&C_m(\psi_{\lambda},0)\neq 0,\\
			&\Rightarrow&C_m(\psi_{\lambda},\infty)\neq 0\ (\mbox{see (\ref{eq60})}),\\
			&\Rightarrow&C_m(\varphi,\infty)\neq 0\ (\mbox{see (\ref{eq59})}).
		\end{eqnarray*}
\end{proof}

Now we are ready to prove our first existence theorem.
\begin{theorem}\label{th14}
	If hypotheses $H(\xi),H(\beta),H_1$ hold, then problem (\ref{eq1}) admits a nontrivial solution $\hat{u}\in C^1(\overline{\Omega})$.
\end{theorem}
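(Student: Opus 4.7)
The plan is a short Morse-theoretic argument by contradiction, built directly on the three propositions just established. Proposition~\ref{prop11} supplies the C-condition for $\varphi$, Proposition~\ref{prop12} gives $C_1(\varphi,0)\neq 0$, and Proposition~\ref{prop13} gives $C_1(\varphi,\infty)=0$. These two critical group computations are already in direct tension: if $u=0$ were the \emph{only} critical point of $\varphi$, then the critical groups of $\varphi$ at $0$ and at infinity would have to coincide, contradicting the mismatch in degree $1$.

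First I would suppose, for contradiction, that $K_\varphi=\{0\}$, so that $\varphi(0)=0$ is the unique critical value (note $\varphi(0)=0$ and $\varphi'(0)=0$ since $f(z,0)=0$ by $H_1$). Since $\varphi$ satisfies the C-condition at every level, the Second Deformation Theorem (Theorem~\ref{th2}) applied on $[-\epsilon,\epsilon]$ and the Noncritical Interval Theorem (Remark~\ref{rem3}) applied on $[\epsilon,\infty)$ combine in the standard way to give
$$C_k(\varphi,0)=C_k(\varphi,\infty)\quad\text{for every }k\in\NN_0$$
(this is exactly the implication used for $\psi_\lambda$ in the proof of Proposition~\ref{prop13}; see also Motreanu--Motreanu--Papageorgiou~\cite{14}). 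Taking $k=1$ and comparing with Propositions~\ref{prop12} and~\ref{prop13} forces $0\neq C_1(\varphi,0)=C_1(\varphi,\infty)=0$, the desired contradiction. Hence there exists $\hat u\in W^{1,p}(\Omega)\setminus\{0\}$ with $\varphi'(\hat u)=0$, and unpacking $\varphi'(\hat u)=0$ against arbitrary test functions $h\in W^{1,p}(\Omega)$ shows that $\hat u$ is a nontrivial weak solution of problem~(\ref{eq1}).

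It remains to upgrade the regularity of $\hat u$. The $L^\infty$-estimate of Papageorgiou--R\u adulescu~\cite{19} gives $\hat u\in L^\infty(\Omega)$, and then Lieberman's nonlinear regularity theorem~\cite{13} -- invoked in the same way as in the brief regularity discussion following problem~(\ref{eq2}) -- yields $\hat u\in C^{1,\alpha}(\overline\Omega)\subset C^1(\overline\Omega)$ for some $\alpha\in(0,1)$, completing the proof. I do not anticipate a serious obstacle: the whole Morse-theoretic machinery has been assembled in Propositions~\ref{prop11}--\ref{prop13}, and the only mildly delicate point is checking the hypotheses of the two deformation results at all relevant levels, which is immediate from the C-condition and from the assumption $K_\varphi=\{0\}$ in the contradiction step. (The stronger information $C_m(\varphi,\infty)\neq 0$ from Proposition~\ref{prop13} is not needed here, but will come into play in the multiplicity results.)
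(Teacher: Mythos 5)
Your argument is correct and rests on the same core observation as the paper's proof, namely the mismatch $C_1(\varphi,0)\neq 0$ (Proposition~\ref{prop12}) versus $C_1(\varphi,\infty)=0$ (Proposition~\ref{prop13}), with the C-condition from Proposition~\ref{prop11} making the critical-group machinery applicable. The paper closes the argument by invoking Corollary~6.92 of Motreanu--Motreanu--Papageorgiou, which directly yields a nontrivial $u_0\in K_\varphi$ together with extra information about $\mathrm{sgn}\,\varphi(u_0)$ and a nonvanishing critical group of $u_0$; you instead argue by contradiction that $K_\varphi=\{0\}$ would force $C_k(\varphi,0)=C_k(\varphi,\infty)$ for all $k$, which is the more elementary route and entirely sufficient here -- the additional structural information supplied by the corollary is not used in the paper's proof of this theorem either.
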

\begin{proof}
	From Proposition \ref{prop11} and \ref{prop12} we have
	$$C_1(\varphi,0)\neq 0\ \mbox{and}\ C_1(\varphi,\infty)=0.$$
	
	So, Corollary 6.92 of Motreanu, Motreanu \& Papageorgiou \cite[p. 173]{14}, implies that we can find $u_0\in K_{\varphi}$ such that
	\begin{eqnarray*}
		&&\varphi(u_0)<\varphi(0)=0\ \mbox{and}\ C_0(\varphi,u_0)\neq 0\\
		\mbox{or}&&\varphi(u_0)>\varphi(0)=0\ \mbox{and}\ C_2(\varphi,u_0)\neq 0.
	\end{eqnarray*}
	
	Evidently in both cases $u_0\neq 0$ and solves problem (\ref{eq1}). Moreover, from Papageorgiou \& R\u adulescu \cite{19} we have $u_0\in L^{\infty}(\Omega)$ and then Theorem 2 of Lieberman \cite{13} implies that $u_0\in C^1(\overline{\Omega})$.
\end{proof}

We can obtain another existence theorem if we change the geometry of the problem near the origin. In this case we allow resonance at $+\infty$ with any variational eigenvalue.

So, the new hypotheses on the reaction $f(z,x)$ are the following:

\smallskip
$H_2:$ $f:\Omega\times\RR\rightarrow\RR$ is a Carath\'eodory function such that $f(z,0)=0$ for almost all $z\in\Omega$ and
\begin{itemize}
	\item[(i)] for every $\rho>0$, there exists $a_{\rho}\in L^{\infty}(\Omega)$, such that
	$$|f(z,x)|\leq a_{\rho}(z)\ \mbox{for almost all}\ z\in\Omega,\ \mbox{all}\ |x|\leq\rho;$$
	\item[(ii)] there exists $m\in\NN$ such that
	$$\lim\limits_{x\rightarrow\pm\infty}\frac{f(z,x)}{|x|^{p-2}x}=\hat{\lambda}_m\ \mbox{uniformly for almost all}\ z\in\Omega;$$
	\item[(iii)] if $F(z,x)=\int^x_0f(z,s)ds$, then $\lim\limits_{x\rightarrow\pm\infty}[f(z,x)x-pF(z,x)]=+\infty$ uniformly for almost all $z\in\Omega;$
	\item[(iv)] there exists $\eta\in L^{\infty}(\Omega),\ \eta(z)\leq\hat{\lambda}_1$ for almost all $z\in\Omega$, $\eta\not\equiv\hat{\lambda}_1$ such that
	$$\limsup\limits_{x\rightarrow 0}\frac{pF(z,x)}{|x|^p}\leq\eta(z)\ \mbox{uniformly for almost all}\ z\in\Omega.$$
\end{itemize}
\begin{lemma}\label{lem15}
	If $\eta\in L^{\infty}(\Omega),\ \eta(z)\leq\hat{\lambda}_1$ for almost all $z\in\Omega,\ \eta\not\equiv\hat{\lambda}_1$, then there exists $c_6>0$ such that
	$$\psi(u)=\vartheta(u)-\int_{\Omega}\eta(z)|u|^pdz\geq c_6||u||^p\ \mbox{for all}\ u\in W^{1,p}(\Omega).$$
\end{lemma}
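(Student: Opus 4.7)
My plan is to argue by contradiction, using the $p$-homogeneity of $\psi$ together with the variational characterization of $\hat{\lambda}_1$, its simplicity, and the fact that $\hat{u}_1 \in D_+$ is strictly positive.

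First I would observe that from (\ref{eq3}) and the hypothesis $\eta(z) \leq \hat{\lambda}_1$ we immediately get $\psi(u) \geq \vartheta(u) - \hat{\lambda}_1 \|u\|_p^p \geq 0$ for all $u \in W^{1,p}(\Omega)$. So the lemma is really a coercivity statement, and it suffices to rule out $\inf\{\psi(u) : \|u\| = 1\} = 0$. Suppose this infimum is zero. Pick a minimizing sequence $\{u_n\}_{n \geq 1} \subseteq W^{1,p}(\Omega)$ with $\|u_n\| = 1$ and $\psi(u_n) \to 0$. Since $\{u_n\}$ is bounded in $W^{1,p}(\Omega)$, we may pass to a subsequence with
$$u_n \stackrel{w}{\rightarrow} u \text{ in } W^{1,p}(\Omega), \quad u_n \rightarrow u \text{ in } L^p(\Omega) \text{ and in } L^p(\partial\Omega).$$

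Next I would use the sequential weak lower semicontinuity of $\vartheta(\cdot)$ together with the strong convergence in $L^p(\Omega)$ to pass to the limit: $\psi(u) \leq \liminf_n \psi(u_n) = 0$. Combined with $\psi \geq 0$, this forces $\psi(u) = 0$, i.e.
$$\vartheta(u) = \int_\Omega \eta(z)|u|^p dz \leq \hat{\lambda}_1 \|u\|_p^p.$$
By (\ref{eq3}) the reverse inequality also holds, so $\vartheta(u) = \hat{\lambda}_1 \|u\|_p^p$. If $u \neq 0$, then $u$ realizes the infimum in (\ref{eq3}), and by simplicity of $\hat{\lambda}_1$ we have $u = \tau \hat{u}_1$ for some $\tau \in \RR \setminus \{0\}$. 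Since $\hat{u}_1 \in D_+$, we have $|u(z)| > 0$ for every $z \in \overline{\Omega}$. But the assumption $\eta \leq \hat{\lambda}_1$ with $\eta \not\equiv \hat{\lambda}_1$ gives $\int_\Omega [\hat{\lambda}_1 - \eta(z)]|u|^p dz > 0$, which contradicts $\vartheta(u) = \int_\Omega \eta(z)|u|^p dz$.

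Therefore $u = 0$. Then $u_n \to 0$ in $L^p(\Omega)$ and in $L^p(\partial\Omega)$, so
$$\int_\Omega \xi(z) |u_n|^p dz \to 0, \quad \int_{\partial\Omega} \beta(z) |u_n|^p d\sigma \to 0, \quad \int_\Omega \eta(z) |u_n|^p dz \to 0.$$
Since $\psi(u_n) \to 0$, this forces $\|Du_n\|_p^p \to 0$, hence $\|u_n\|^p = \|u_n\|_p^p + \|Du_n\|_p^p \to 0$, contradicting $\|u_n\| = 1$. The contradiction proves the existence of $c_6 > 0$.

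The main obstacle is the last step, where one has to explain why $u = 0$ leads to a contradiction: the point is that the two ingredients ($L^p$-convergence of the lower-order terms and vanishing of $\psi$) conspire to kill the $\|Du_n\|_p$ term, and the conclusion uses crucially that the norm on $W^{1,p}(\Omega)$ controls both $\|u\|_p$ and $\|Du\|_p$. The earlier step — excluding $u \neq 0$ — is where the strict positivity $\hat{u}_1 \in D_+$ and the simplicity of $\hat{\lambda}_1$ are essential and where the hypothesis $\eta \not\equiv \hat{\lambda}_1$ is used in a nontrivial way.
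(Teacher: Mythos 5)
Your proof is correct and follows essentially the same contradiction argument as the paper: normalize a minimizing sequence, extract a weak limit, use weak lower semicontinuity of $\vartheta$ and the $L^p$-strong convergence of the lower-order terms to get $\psi(u)\le 0$, then split into the cases $u=0$ and $u\ne 0$, handling the latter via the simplicity of $\hat{\lambda}_1$, the strict positivity $\hat{u}_1\in D_+$, and $\eta\not\equiv\hat{\lambda}_1$. In fact you spell out two steps the paper leaves implicit: the justification of $\psi(u)\le 0$ via weak lower semicontinuity (the paper's reference to equation (6) there is a misprint), and, in the case $u=0$, the explicit deduction $\|Du_n\|_p\to 0$ from $\psi(u_n)\to 0$ together with the vanishing of the $L^p(\Omega)$ and $L^p(\partial\Omega)$ terms, which yields $\|u_n\|\to 0$ and the contradiction with $\|u_n\|=1$; the paper merely asserts $u_n\to 0$ in $W^{1,p}(\Omega)$ without this explanation.
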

\begin{proof}
	From (\ref{eq3}) we see that $\psi\geq 0$. Arguing indirectly, suppose that the lemma is not true. Exploiting the $p$-homogeneity of $\psi(\cdot)$ we can find $\{u_n\}_{n\geq 1}\subseteq W^{1,p}(\Omega)$ such that
	\begin{equation}\label{eq61}
		||u_n||=1\ \mbox{for all}\ n\in\NN\ \mbox{and}\ \psi(u_n)\downarrow 0.
	\end{equation}
	
	We may assume that
	$$u_n\stackrel{w}{\rightarrow}u\ \mbox{in}\ W^{1,p}(\Omega)\ \mbox{and}\ u_n\rightarrow u\ \mbox{in}\ L^p(\Omega)\ \mbox{and in}\  L^p(\partial\Omega).$$
	
	Then in the limit as $n\rightarrow\infty$, we have
	\begin{eqnarray}\label{eq62}
		&&\psi(u)\leq 0\ (\mbox{see (\ref{eq6})}),\nonumber\\
		&\Rightarrow&\vartheta(u)\leq\int_{\Omega}\eta(z)|u|^pdz\leq\hat{\lambda}_1||u||^p_p,\\
		&\Rightarrow&\vartheta(u)=\hat{\lambda}_1||u||^p_p\ (\mbox{see (\ref{eq3})}),\nonumber\\
		&\Rightarrow&u=k\hat{u}_1\ \mbox{with}\ k\in\RR.\nonumber
	\end{eqnarray}
	
	If $k=0$, then $u=0$ and we have $u_n\rightarrow 0$ in $W^{1,p}(\Omega)$, a contradiction (see (\ref{eq61})).
	
	If $k\neq 0$, then $|u(z)|\neq 0$ for all $z\in\overline{\Omega}$ (recall that $\hat{u}_1\in D_+$). From (\ref{eq62}) and the hypothesis on $\eta(\cdot)$, we infer that
	$$\vartheta(u)<\hat{\lambda}_1||u||^p_p,$$
	which contradicts (\ref{eq3}). Therefore the lemma is true.
\end{proof}

Now we can have our second existence theorem.
\begin{theorem}\label{th16}
	If hypotheses $H(\xi),H(\beta),H_2$ hold, then problem (\ref{eq1}) admits a nontrivial solution $u_0\in C^1(\overline{\Omega})$.
\end{theorem}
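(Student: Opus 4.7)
The plan is to locate a nontrivial critical point of the energy functional $\varphi$ by showing that $0$ is a strict local minimizer and exploiting the nontrivial topology of $\varphi$ at infinity via the Morse relation (\ref{eq5}).

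First I would verify that $\varphi$ satisfies the C-condition. Since hypotheses $H_2(i)$--$(iii)$ coincide with $H_1(i)$--$(iii)$, the argument is identical to the proof of Proposition \ref{prop11}: assuming a Cerami sequence is unbounded, normalize to $y_n = u_n/\|u_n\|$, extract a weak limit $y$, upgrade to strong convergence with $\|y\|=1$ via Proposition \ref{prop7}, and use $H_2(iii)$ with Fatou's lemma to contradict the bound analogous to (\ref{eq32}).

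Second, I would establish that $0$ is a strict local minimizer of $\varphi$. Combining $H_2(iv)$ with the growth from $H_2(i),(ii)$, for every $\epsilon > 0$ and every fixed $r \in (p, p^*)$ there exists $c_\epsilon > 0$ such that
$$pF(z,x) \leq (\eta(z)+\epsilon)|x|^p + c_\epsilon |x|^r \quad \mbox{for a.a. } z \in \Omega,\ \mbox{all } x \in \RR.$$
Applying Lemma \ref{lem15} and the embeddings $W^{1,p}(\Omega)\hookrightarrow L^p(\Omega), L^r(\Omega)$ yields
$$\varphi(u) \geq \tfrac{1}{p}(c_6 - \epsilon \hat{c})\|u\|^p - c_8 \|u\|^r$$
for suitable constants $\hat{c}, c_8 > 0$. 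Choosing $\epsilon$ so that $c_6 - \epsilon \hat{c} > 0$ and using $r > p$, we conclude $\varphi(u) > 0 = \varphi(0)$ on a punctured ball around $0$. Hence $0$ is an isolated strict local minimizer, so $C_0(\varphi, 0) = \ZZ$ and $C_k(\varphi, 0) = 0$ for all $k \geq 1$.

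Third, I would observe that the proof of Proposition \ref{prop13} uses only the asymptotic hypotheses $H_1(i)$--$(iii)$, which coincide with $H_2(i)$--$(iii)$, as well as Proposition \ref{prop10}. Therefore its conclusions transfer verbatim: $C_0(\varphi,\infty) = C_1(\varphi,\infty) = 0$ and $C_m(\varphi,\infty) \neq 0$ with $m \geq 2$. Finally, suppose for contradiction that $K_\varphi = \{0\}$. Then (\ref{eq5}) reduces to
$$1 = M(t,0) = P(t,\infty) + (1+t)Q(t).$$
Equating the coefficients of $t^m$ gives $0 = \mathrm{rank}\, C_m(\varphi,\infty) + \beta_m + \beta_{m-1}$, which is impossible since all three summands are nonnegative and the first is at least $1$. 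Hence there exists $u_0 \in K_\varphi$ with $u_0 \neq 0$, giving a nontrivial weak solution of (\ref{eq1}); the regularity $u_0 \in C^1(\overline{\Omega})$ follows from \cite{19} (bounding $u_0 \in L^\infty(\Omega)$) and \cite[Theorem 2]{13}.

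The main obstacle is Step 2: one must carefully combine the one-sided upper bound on $F$ near $0$ from $H_2(iv)$ with the global coercivity of Lemma \ref{lem15}, ensuring that $\eta(z) \le \hat\lambda_1$ with strict inequality on a set of positive measure is enough to absorb the $\epsilon\|u\|_p^p$ remainder while the superlinear correction $c_\epsilon|x|^r$ stays negligible in a small ball. Once the local minimum geometry is in place, the Morse-theoretic endgame is routine.
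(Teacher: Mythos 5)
Your proposal is correct and arrives at the same critical-group picture ($C_k(\varphi,0)=\delta_{k,0}\ZZ$ and $C_m(\varphi,\infty)\neq 0$) that drives the paper's proof, but the route to the first of these is genuinely different and worth noting. The paper verifies $\varphi\geq 0$ on a small $C^1(\overline{\Omega})$-ball (where the pointwise bound $|u(z)|\leq\delta$ allows the local estimate \eqref{eq63} to be applied directly, with no remainder term) and then invokes Proposition \ref{prop6} to upgrade the $C^1$-local minimizer to a $W^{1,p}$-local minimizer. You instead extend the one-sided estimate from $H_2(iv)$ to a global inequality $pF(z,x)\leq(\eta(z)+\epsilon)|x|^p+c_\epsilon|x|^r$ with $r\in(p,p^*)$ (using the $p$-growth at infinity coming from $H_2(i),(ii)$ to absorb the region $|x|>\delta$ into the $|x|^r$ term), and then combine Lemma \ref{lem15} with the Sobolev embedding to estimate $\varphi$ directly on a small $W^{1,p}$-ball. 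This bypasses the nontrivial regularity input behind Proposition \ref{prop6}, making the step more self-contained; the trade-off is that the extra $|x|^r$ correction term must be carried along and shown to be dominated, which is precisely what the strict inequality $r>p$ buys you. Your endgame -- expanding the Morse relation \eqref{eq5} and comparing the $t^m$ coefficient -- is the unpackaged form of the abstract existence result (Theorem 6.62 of \cite{14}) that the paper cites; same content.

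Two small points to tidy. First, $H_2(ii)$ only requires $m\in\NN$, i.e.\ $m\geq 1$, not $m\geq 2$; your Morse-coefficient argument in degree $m$ works for any $m\geq 1$, so the restriction you wrote is an unnecessary transcription of $H_1(ii)$. (Relatedly, for $m=1$ the full statement of Proposition \ref{prop13} cannot literally transfer, since $C_1(\varphi,\infty)=0$ and $C_m(\varphi,\infty)\neq 0$ would clash; but you only use the $C_m(\varphi,\infty)\neq 0$ part, whose Cingolani--Degiovanni proof does not require $\lambda>\hat\lambda_2$, so this is harmless.) Second, in the contradiction step you should note that if $K_\varphi$ is infinite or contains a nonzero element you are already done, so one may assume $K_\varphi=\{0\}$ -- this also guarantees that $0$ is an isolated critical point, which is needed for $C_k(\varphi,0)$ to be defined at all.
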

\begin{proof}
	Hypothesis $H_2(iv)$ implies that given $\epsilon>0$, we can find $\delta=\delta(\epsilon)>0$ such that
	\begin{equation}\label{eq63}
		F(z,x)\leq\frac{1}{p}(\eta(z)+\epsilon)|x|^p\ \mbox{for almost all}\ z\in\Omega,\ \mbox{all}\ |x|\leq\delta.
	\end{equation}
	
	Let $u\in C^1(\overline{\Omega})$ with $||u||_{C^1(\overline{\Omega})}\leq\delta$. We have
	\begin{eqnarray*}
		\varphi(u)&=&\frac{1}{p}\vartheta(u)-\int_{\Omega}F(z,u)dz\\
		&\geq&\frac{1}{p}\left[\vartheta(u)-\int_{\Omega}\eta(z)|u|^pdz\right]-\epsilon||u||^p\ (\mbox{see (\ref{eq63})})\\
		&\geq&(c_7-\epsilon)||u||^p\ \mbox{for some}\ c_7>0\ \mbox{(see Lemma \ref{lem15})}.
	\end{eqnarray*}
	
	Choosing $\epsilon\in(0,c_7)$ we infer that
	\begin{eqnarray}\label{eq64}
		&&\varphi(u)\geq 0\ \mbox{for all}\ u\in C^1(\overline{\Omega}),\ ||u||_{C^1(\overline{\Omega})}\leq\delta,\nonumber\\
		&\Rightarrow&u=0\ \mbox{is a local}\ C^1(\overline{\Omega})-\mbox{minimizer of}\ \varphi,\nonumber\\
		&\Rightarrow&u=0\ \mbox{is a local}\ W^{1,p}(\Omega)-\mbox{minimizer of}\ \varphi\ (\mbox{see Proposition \ref{prop6}}),\nonumber\\
		&\Rightarrow&C_k(\varphi,0)=\delta_{k,0}\ZZ\ \mbox{for all}\ k\in\NN_0.
	\end{eqnarray}
	
	On the other hand, from Proposition \ref{prop13} we have $C_m(\varphi,\infty)\neq 0$. So, from Theorem 6.62 of Motreanu, Motreanu \& Papageorgiou \cite[p. 160]{14}, we know that we can find $u_0\in K_{\varphi}$ such that
	\begin{equation}\label{eq65}
		C_m(\varphi,u_0)\neq 0,\ m\geq 1.
	\end{equation}
	
	Comparing (\ref{eq64}) and (\ref{eq65}), we infer that $u_0\neq 0$. As before the nonlinear regularity theory implies that $u_0\in C^1(\overline{\Omega})$.
\end{proof}

\section{Resonance with Respect to the Principal Eigenvalue}

In this section, we examine problems which are resonant with respect to the principal eigenvalue. The problem under consideration is the following:
\begin{eqnarray}\label{eq66}
	\left\{\begin{array}{ll}
		-\Delta_pu(z)+\xi(z)|u(z)|^{p-2}u(z)=\hat{\lambda}_1|u(z)|^{p-2}u(z)+g(z,u(z))&\mbox{in}\ \Omega,\\
		\frac{\partial u}{\partial n_p}+\beta(z)|u|^{p-2}u=0&\mbox{on}\ \partial\Omega.
	\end{array}\right\}
\end{eqnarray}

The hypotheses on the perturbation $g(z,x)$ are the following:

\smallskip
$H_3:$ $g:\Omega\times\RR\rightarrow\RR$ is a Carath\'eodory function such that $g(z,0)=0$ for almost all $z\in\Omega$ and
\begin{itemize}
	\item[(i)] if $G(z,x)=\int^x_0g(z,s)ds$, then there exist functions $G_{\pm}\in L^1(\Omega)$ such that
	\begin{eqnarray*}
		&&\int_{\Omega}G_{\pm}(z)dz\leq 0\\
		&&g(z,x)\rightarrow 0\ \mbox{and}\ G(z,x)\rightarrow G_{\pm}(z)\ \mbox{uniformly for almost all}\ z\in\Omega,\ \mbox{as}\ x\rightarrow\pm\infty;
	\end{eqnarray*}
	and for every $\rho>0$ there exists $a_{\rho}\in L^{\infty}(\Omega)_+$ such that
	$$|g(z,x)|\leq a_{\rho}(z)\ \mbox{for almost all}\ z\in\Omega,\ \mbox{all}\ |x|\leq\rho.$$
	\item[(ii)] $G(z,x)\leq\frac{1}{p}[\hat{\lambda}(p)-\hat{\lambda}_1]|x|^p$ for almost all $z\in\Omega$, all $x\in\RR$;
	\item[(iii)] there exists a function $\eta\in L^{\infty}(\Omega)$ such that
	\begin{eqnarray*}
		&&\eta(z)\geq 0\ \mbox{for almost all}\ z\in\Omega,\ \eta\neq 0,\\
		&&\liminf\limits_{x\rightarrow 0}\frac{pG(z,x)}{|x|^p}\geq\eta(z)\ \mbox{uniformly for almost all}\ z\in\Omega.
	\end{eqnarray*}
\end{itemize}
\begin{remark}
	Because of hypothesis $H_3(i)$ in the terminology introduced by Landesman \& Lazer \cite{11}, the problem is ``strongly resonant" with respect to the principal eigenvalue. Such problems exhibit a partial lack of compactness (that is, the energy (Euler) functional of the problem, does not satisfy the C-condition at all levels). This is evident in Proposition \ref{prop17} which follows.
\end{remark}

The energy functional $\varphi:W^{1,p}(\Omega)\rightarrow\RR$ is defined by
$$\varphi(u)=\frac{1}{p}\vartheta(u)-\frac{\hat{\lambda}_1}{p}||u||^p_p-\int_{\Omega}G(z,u)dz\ \mbox{for all}\ u\in W^{1,p}(\Omega).$$

We have $\varphi\in C^1(W^{1,p}(\Omega)).$
\begin{prop}\label{prop17}
	If hypotheses $H(\xi),H(\beta),H_3$ hold, then the functional $\varphi$ satisfies the $C_c$-condition for every
	$$c<\min\left\{-\int_{\Omega}G_+(z)dz,-\int_{\Omega}G_-(z)dz\right\}.$$
\end{prop}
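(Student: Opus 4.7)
The plan is to take a Cerami sequence $\{u_n\}_{n\ge1}\subseteq W^{1,p}(\Omega)$ with $\varphi(u_n)\to c$ and $(1+\|u_n\|)\varphi'(u_n)\to 0$, and to establish (a) boundedness of $\{u_n\}$ in $W^{1,p}(\Omega)$, after which (b) strong convergence will be a routine application of the $(S)_+$ property of $A$ (Proposition \ref{prop7}), exactly as in the last few lines of the proof of Proposition \ref{prop11}. The whole difficulty is in (a): the strong resonance at $\hat\lambda_1$ means that coercivity of the quadratic-type part is lost along the direction of $\hat u_1$, so I must extract a contradiction from the level constraint $c<\min\{-\int_\Omega G_+dz,-\int_\Omega G_-dz\}$.

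For the boundedness step I argue by contradiction and assume $\|u_n\|\to\infty$. Setting $y_n=u_n/\|u_n\|$, I pass to a subsequence with $y_n\stackrel{w}{\to}y$ in $W^{1,p}(\Omega)$ and $y_n\to y$ in $L^p(\Omega)$ and $L^p(\partial\Omega)$. Hypothesis $H_3(i)$ guarantees that $g$ is uniformly bounded (it is bounded on bounded sets and tends to $0$ at $\pm\infty$ uniformly in $z$), so $g(\cdot,u_n(\cdot))/\|u_n\|^{p-1}\to 0$ in $L^{p'}(\Omega)$. Inserting $h=y_n-y$ in the rescaled version of the relation $\varphi'(u_n)\to 0$ and using the compactness of the embeddings gives $\langle A(y_n),y_n-y\rangle\to 0$; the $(S)_+$ property then forces $y_n\to y$ in $W^{1,p}(\Omega)$, so $\|y\|=1$. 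Passing to the limit in the full equation, $y$ solves the eigenvalue problem (\ref{eq2}) at $\hat\lambda=\hat\lambda_1$, and by simplicity $y=\eta\hat u_1$ for some $\eta\in\RR\setminus\{0\}$; in particular $y$ has constant sign in $\overline\Omega$ since $\hat u_1\in D_+$.

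Now I exploit the level. Because $\|u_n\|\to\infty$ and $y_n\to\eta\hat u_1$ uniformly on compact sets (in fact in $W^{1,p}$), we have $|u_n(z)|\to+\infty$ for almost all $z\in\Omega$, with a fixed sign equal to $\operatorname{sgn}(\eta)$. Using the pointwise uniform bound on $G$ (coming from $H_3(i)$, as noted above) together with the Lebesgue dominated convergence theorem, I obtain
\[
\int_\Omega G(z,u_n)\,dz\ \longrightarrow\ \int_\Omega G_{\pm}(z)\,dz
\]
according to the sign of $\eta$. On the other hand, since $\varphi(u_n)\to c$, this yields
\[
\frac{1}{p}\bigl[\vartheta(u_n)-\hat\lambda_1\|u_n\|_p^p\bigr]\ \longrightarrow\ c+\int_\Omega G_{\pm}(z)\,dz.
\]
The variational characterization (\ref{eq3}) of $\hat\lambda_1$ ensures $\vartheta(u_n)-\hat\lambda_1\|u_n\|_p^p\ge 0$ for every $n$, so the right hand side must be $\ge 0$. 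But the assumption $c<-\int_\Omega G_{\pm}(z)\,dz$ makes it strictly negative in both cases, a contradiction. Hence $\{u_n\}$ is bounded; this is the main obstacle, and it is precisely where the threshold hypothesis on $c$ enters.

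Once boundedness is in hand, I extract a subsequence with $u_n\stackrel{w}{\to}u$ in $W^{1,p}(\Omega)$ and $u_n\to u$ in $L^p(\Omega)$ and in $L^p(\partial\Omega)$. Choosing $h=u_n-u$ in the relation satisfied by $\varphi'(u_n)$, the terms with $\xi$, $\beta$, $\hat\lambda_1|u_n|^{p-2}u_n$ and $g(z,u_n)$ all tend to zero (the first three by the compact embeddings, the last because $\{g(\cdot,u_n(\cdot))\}$ is bounded in $L^{p'}(\Omega)$). This leaves $\langle A(u_n),u_n-u\rangle\to 0$, and Proposition \ref{prop7} yields $u_n\to u$ in $W^{1,p}(\Omega)$, completing the verification of the $C_c$-condition.
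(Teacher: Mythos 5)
Your proof is correct, and it follows the same broad outline as the paper's (contradiction argument, normalize $y_n=u_n/\|u_n\|$, identify the direction as a multiple of $\hat u_1$, exploit the level restriction), but two internal steps are done differently. To identify $y$, the paper divides the level estimate $\varphi(u_n)\leq M_3$ by $\|u_n\|^p$, uses weak lower semicontinuity of $\vartheta$ together with the boundedness of $G$ to deduce $\vartheta(y)\leq\hat\lambda_1\|y\|_p^p$, and then invokes the variational characterization (\ref{eq3}) directly; strong convergence of $y_n$ is never needed there. You instead feed $h=y_n-y$ into the rescaled derivative estimate, obtain $\langle A(y_n),y_n-y\rangle\to0$, use the $(S)_+$ property (Proposition \ref{prop7}) to get $y_n\to y$ strongly, and then pass to the limit in the weak formulation to exhibit $y$ as a nontrivial solution of the eigenvalue problem (\ref{eq2}) at $\hat\lambda_1$; this is heavier machinery but also yields $\|y\|=1$ for free, whereas the paper must argue the case $k=0$ separately. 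For the contradiction, the paper uses $\vartheta(u_n)\geq\hat\lambda_1\|u_n\|_p^p$ and Fatou's lemma to obtain the one-sided bound $-\int_\Omega G_+\,dz\leq c$, while you compute the exact limit $\int_\Omega G(z,u_n)\,dz\to\int_\Omega G_\pm\,dz$ by dominated convergence (valid because $H_3(i)$ makes $G$ uniformly bounded) and observe that the nonnegative quantity $\frac1p[\vartheta(u_n)-\hat\lambda_1\|u_n\|_p^p]$ would then converge to the negative number $c+\int_\Omega G_\pm\,dz$. Both routes are sound; yours trades the Fatou estimate for a cleaner identity at the cost of invoking the $(S)_+$ argument one extra time. (One small slip: strong $W^{1,p}$-convergence gives a.e.\ convergence along a subsequence, not uniform convergence on compact sets, but this does not affect the argument.)
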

\begin{proof}
	Let $m_0=\min\left\{-\int_{\Omega}G_+(z)dz,-\int_{\Omega}G_-(z)dz\right\}$ and let $c<m_0$. We consider a sequence such that
	\begin{eqnarray}
		&&\varphi(u_n)\rightarrow c,\label{eq67}\\
		&&(1+||u_n||)\varphi'(u_n)\rightarrow 0\ \mbox{in}\ W^{1,p}(\Omega)^*.\label{eq68}
	\end{eqnarray}
	\begin{claim}
		$\{u_n\}_{n\geq 1}\subseteq W^{1,p}(\Omega)$ is bounded.
	\end{claim}
	
	Arguing by contradiction, suppose that the claim is not true. By passing to a subsequence if necessary, we may assume that
	\begin{equation}\label{eq69}
		||u_n||\rightarrow\infty .
	\end{equation}
	
	Let $y_n=\frac{u_n}{||u_n||},\ n\in\NN$. Then $||y_n||=1$ for all $n\in\NN$ and so we may assume that
	\begin{equation}\label{eq70}
		y_n\stackrel{w}{\rightarrow}y\ \mbox{in}\ W^{1,p}(\Omega)\ \mbox{and}\ y_n\rightarrow y\ \mbox{in}\ L^p(\Omega)\ \mbox{and in}\ L^p(\partial\Omega).
	\end{equation}
	
	From (\ref{eq67}) we have
	\begin{eqnarray*}
		&&\frac{1}{p}\vartheta(u_n)-\frac{\hat{\lambda}_1}{p}||u_n||^p_p-\int_{\Omega}G(z,u_n)dz\leq M_3\ \mbox{for some}\ M_3>0,\ \mbox{all}\ n\in\NN,\\
		&\Rightarrow&\frac{1}{p}\vartheta(y_n)-\frac{\hat{\lambda}_1}{p}||y_n||^p_p-\int_{\Omega}\frac{G(z,u_n)}{||u_n||^p}dz\leq\frac{M_3}{||u_n||^p}\ \mbox{for all}\ n\in\NN,\\
		&\Rightarrow&\vartheta(y)\leq\hat{\lambda}_1||y||^p_p\ (\mbox{see (\ref{eq69}), (\ref{eq70}) and hypothesis $H_3(i)$}),\\
		&\Rightarrow&\vartheta(y)=\hat{\lambda}_1||y||^p_p\ (\mbox{see (\ref{eq3})}),\\
		&\Rightarrow&y=k\hat{u}_1\ \mbox{with}\ k\in\RR.
	\end{eqnarray*}
	
	If $k=0$, then $y_n\rightarrow 0$ in $W^{1,p}(\Omega)$ a contradiction to the fact that $||y_n||=1$ for all $n\in\NN$.
	
	If $k\neq 0$, then to fix things we assume that $k>0$ (the reasoning is similar if $k<0$). We have
	\begin{equation}\label{eq71}
		u_n(z)\rightarrow+\infty\ \mbox{for almost all}\ z\in\Omega,\ \mbox{as}\ n\rightarrow\infty.
	\end{equation}
	
	From (\ref{eq67}) we see that given $\epsilon>0$, we can find $n_0=n_0(\epsilon)\in\NN$ such that
	\begin{eqnarray}\label{eq72}
		&&\varphi(u_n)\leq c+\epsilon\ \mbox{for all}\ n\geq n_0,\nonumber\\
		&\Rightarrow&\frac{1}{p}\vartheta(u_n)-\frac{\hat{\lambda}_1}{p}||u_n||^p_p-\int_{\Omega}G(z,u_n)dz\leq c+\epsilon\ \mbox{for all}\ n\geq n_0.
	\end{eqnarray}
	
	From (\ref{eq3}) we have
	$$\hat{\lambda}_1||u_n||^p_p\leq\vartheta(u_n)\ \mbox{for all}\ n\in\NN .$$
	
	Using this in (\ref{eq72}), we obtain
	\begin{eqnarray*}
		&&-\int_{\Omega}G(z,u_n)dz\leq c+\epsilon\ \mbox{for all}\ n\geq n_0,\\
		&\Rightarrow&-\int_{\Omega}G_+(z)dz\leq c+\epsilon\ (\mbox{from (\ref{eq71}), hypothesis}\ H_3(i)\ \mbox{and Fatou's lemma}).
	\end{eqnarray*}
	
	Since $\epsilon>0$ is arbitrary, we let $\epsilon\rightarrow 0^+$ and conclude that
	$$-\int_{\Omega}G_+(z)dz\leq c,$$
	which contradicts the choice of $c<m_0$. This proves the claim.
	
	Because of the claim we may assume that
	\begin{equation}\label{eq73}
		u_n\stackrel{w}{\rightarrow}u\ \mbox{in}\ W^{1,p}(\Omega)\ \mbox{and}\ u_n\rightarrow u\ \mbox{in}\ L^p(\Omega)\ \mbox{and in}\ L^p(\partial\Omega).
	\end{equation}
	
	From (\ref{eq68}) we have
	\begin{eqnarray}\label{eq74}
			&&\left|\left\langle A(u_n),h\right\rangle+\int_{\Omega}\xi(z)|u_n|^{p-2}u_nhdz+\int_{\partial\Omega}\beta(z)|u_n|^{p-2}u_nhd\sigma-\hat{\lambda}_1\int_{\Omega}|u_n|^{p-2}u_nhdz-\right.\nonumber\\
			&&\left.\int_{\Omega}g(z,u_n)hdz\right|\leq\frac{\epsilon_n||h||}{1+||u_n||}
	\end{eqnarray}
	for all $h\in W^{1,p}(\Omega)$ with $\epsilon_n\rightarrow 0^+$.
	
	In (\ref{eq74}) we choose $h=u_n-u\in W^{1,p}(\Omega)$, pass to the limit as $n\rightarrow\infty$ and use (\ref{eq73}). Then
	\begin{eqnarray*}
		&&\lim\limits_{n\rightarrow\infty}\left\langle A(u_n),u_n-u\right\rangle=0,\\
		&\Rightarrow&u_n\rightarrow u\ \mbox{in}\ W^{1,p}(\Omega)\ (\mbox{see Proposition \ref{prop7}}),\\
		&\Rightarrow&\varphi\ \mbox{satisfies the}\ C_c-\mbox{condition for}\ c<m_0.
	\end{eqnarray*}
The proof is complete.
\end{proof}

Now we prove a multiplicity theorem for problem (\ref{eq66}) producing two nontrivial smooth solutions.
\begin{theorem}\label{th18}
	If hypotheses $H(\xi),H(\beta),H_3$ hold, then problem (\ref{eq66}) admits at least two nontrivial solutions
	$$u_0,\hat{u}\in C^1(\overline{\Omega}).$$
\end{theorem}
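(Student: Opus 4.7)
The plan is to extract two distinct nontrivial critical points of the energy functional $\varphi$: a global minimizer $u_0$ with $\varphi(u_0)<0$, and a second critical point $\hat{u}$ produced by a Morse--theoretic multiplicity argument rooted in a local linking of $\varphi$ at the origin.

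First, I show that $\varphi$ is bounded below and attains its infimum at some $u_0\ne 0$. Hypothesis $H_3(i)$ forces $G$ to be bounded on $\Omega\times\RR$ (locally bounded by $H_3(i)$ and with finite pointwise limits $G_\pm$ at $\pm\infty$), while (\ref{eq3}) gives $\vartheta(u)-\hat\lambda_1\|u\|_p^p\ge 0$, so $\varphi\ge -C$ for some $C>0$. Using $H_3(iii)$ together with $\vartheta(\hat u_1)=\hat\lambda_1$, for small $t>0$,
$$\varphi(t\hat u_1)=-\int_\Omega G(z,t\hat u_1)\,dz\le -\frac{t^p}{p}\int_\Omega(\eta(z)-\epsilon)\hat u_1^p\,dz<0,$$
since $\hat u_1\in D_+$, $\eta\ge 0$, and $\eta\not\equiv 0$. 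Hence $c_*:=\inf\varphi\in[-C,0)$. Because $\int_\Omega G_\pm\,dz\le 0$ gives $m_0\ge 0>c_*$, Proposition \ref{prop17} supplies the $C_{c_*}$-condition, and Proposition \ref{prop1} yields $u_0\in W^{1,p}(\Omega)$ with $\varphi(u_0)=c_*<0=\varphi(0)$; so $u_0\ne 0$ and nonlinear regularity places $u_0\in C^1(\overline\Omega)$.

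Second, I set up the Morse-theoretic ingredients for a second critical point. Using the decomposition $W^{1,p}(\Omega)=\RR\hat u_1\oplus V_p$, hypothesis $H_3(ii)$ and the definition of $\hat\lambda(p)$ give
$$\varphi(v)\ge \frac{1}{p}\bigl[\vartheta(v)-\hat\lambda(p)\|v\|_p^p\bigr]\ge 0\quad\text{for all } v\in V_p,$$
while $\varphi(t\hat u_1)<0$ for small $|t|>0$ by the estimate above. This is a local linking at $0$ with one-dimensional negative subspace, and Corollary 6.88 of \cite{14} yields $C_1(\varphi,0)\ne 0$. Since $u_0$ is a global minimum, $M(t,u_0)=1$. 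Moreover $\varphi$ is bounded below, and the $C_c$-condition is valid for every $c<m_0$; for any $c<c_*$ the sublevel set $\varphi^c$ is empty while $c<m_0$, so
$$C_k(\varphi,\infty)=H_k(W^{1,p}(\Omega),\emptyset)=H_k(W^{1,p}(\Omega))=\delta_{k,0}\ZZ,$$
and thus $P(t,\infty)=1$.

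Finally, I argue by contradiction: if $K_\varphi=\{0,u_0\}$, the Morse identity (\ref{eq5}) reduces to $M(t,0)=(1+t)Q(t)$ with $Q$ a polynomial with nonnegative integer coefficients. Since $0$ is not a local minimum of $\varphi$, $C_0(\varphi,0)=0$ forces $Q(0)=0$, whereas $C_1(\varphi,0)\ne 0$ forces the linear coefficient of $(1+t)Q(t)$ to be strictly positive; propagating these constraints through the higher-order coefficients---or, more transparently, invoking a Brezis--Nirenberg--type three-critical-point theorem for bounded-below $C^1$ functionals with local linking and partial $C_c$-compactness, available in the framework of \cite{14}---yields a contradiction. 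Hence there exists $\hat{u}\in K_\varphi\setminus\{0,u_0\}$, and by nonlinear regularity $\hat{u}\in C^1(\overline\Omega)$, which is the desired second nontrivial solution. The main obstacle is this last step: closing the Morse contradiction in the quasilinear ($p\ne 2$) setting requires either a refined computation of $C_*(\varphi,0)$ beyond $C_1(\varphi,0)\ne 0$ (e.g., by a local deformation near $0$ to a model functional analogous to $\psi_\lambda$ in Proposition \ref{prop10}) or an appeal to an abstract multiplicity theorem tailored to strongly resonant functionals.
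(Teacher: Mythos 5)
Your first half (producing the global minimizer $u_0$) is correct and essentially identical to the paper: you show $\varphi$ is bounded below via $H_3(i)$ and \eqref{eq3}, you show $\varphi(t\hat{u}_1)<0$ for small $t>0$ via $H_3(iii)$, and you invoke Proposition~\ref{prop17} together with Proposition~\ref{prop1} since $m_0\geq 0>\inf\varphi$.

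The second half, however, contains a genuine gap. The Morse identity \eqref{eq5} is stated and proved under the hypothesis that $\varphi$ satisfies the $C$-condition at \emph{all} levels; for this strongly resonant problem Proposition~\ref{prop17} only gives the $C_c$-condition for $c<m_0$, and the $C$-condition can indeed fail for $c\geq m_0$. In particular you cannot split off a polynomial $Q$ with nonnegative coefficients, nor is it legitimate to compute $C_k(\varphi,\infty)$ as if the usual noncritical-interval machinery applied above the critical range. This is not a technicality: the paper's own Remark~\ref{rem20} explicitly points out that the failure of the compactness condition at certain levels precludes computing the critical groups at infinity and using the Morse relation in this setting. Your fallback appeal to a Brezis--Nirenberg--type three-critical-point theorem suffers from the same defect, since those abstract results also presuppose a global Palais--Smale or Cerami condition, which you do not have.

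What the paper actually does is a direct geometric contradiction that stays strictly below the threshold level where compactness might break down. Assuming $K_\varphi=\{0,u_0\}$, it applies the second deformation theorem (Theorem~\ref{th2}) on the interval $[a,b)=[m,0)$: the $C_c$-condition is guaranteed there because $m_0\geq 0$, $K^b_\varphi=\{0\}$, and $\varphi^a=\{u_0\}$. The resulting $\varphi$-decreasing deformation is used to build a continuous map $\gamma_0:\bar B_r\cap\RR\hat u_1\to W^{1,p}(\Omega)$ that fixes $\partial\bar B_r\cap\RR\hat u_1$ and keeps the image of $\varphi\circ\gamma_0$ strictly negative. Since $\inf_{V_p}\varphi\geq 0$ by $H_3(ii)$ and $\partial\bar B_r\cap\RR\hat u_1$ links $V_p$, the image of $\gamma_0$ must meet $V_p$, forcing $\varphi\geq 0$ somewhere on that image, a contradiction. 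To repair your argument you would have to either replace the Morse-relations step with this linking-plus-deformation argument or establish (nontrivially) a multiplicity theorem valid under only partial Cerami compactness, which the paper itself indicates is not available.
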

\begin{proof}
	Hypothesis $H_3(iii)$ implies that given $\epsilon>0$, we can find $\delta=\delta(\epsilon)>0$ such that
	\begin{equation}\label{eq75}
		G(z,x)\geq\frac{1}{p}(\eta(z)-\epsilon)|x|^p\ \mbox{for almost all}\ z\in\Omega,\ \mbox{all}\ |x|\leq\delta.
	\end{equation}
	
	Recall that $\hat{u}_1\in D_+$. So, for $t\in(0,1)$ small we will have
	\begin{equation}\label{eq76}
		t\hat{u}_1(z)\in\left(0,\delta\right]\ \mbox{for all}\ z\in\overline{\Omega}.
	\end{equation}
	
	Then we have
	\begin{eqnarray}\label{eq77}
		\varphi(t\hat{u}_1)&=&\frac{t^p}{p}[\vartheta(\hat{u}_1)-\hat{\lambda}_1]-\int_{\Omega}G(z,t\hat{u}_1)dz\ (\mbox{recall}\ ||\hat{u}_1||_p=1)\nonumber\\
		&=&-\int_{\Omega}G(z,t\hat{u}_1)dz\nonumber\\
		&\leq&\frac{t^p}{p}\left[\epsilon-\int_{\Omega}\eta(z)\hat{u}_1^pdz\right]\ (\mbox{see (\ref{eq75}), (\ref{eq76}) and recall}\ ||\hat{u}_1||_p=1).
	\end{eqnarray}
	
	Since $\hat{u}_1\in D_+$ and $\eta\not\equiv 0$ (see hypothesis $H_3(iii)$), we have
	$$0<\tau_0=\int_{\Omega}\eta(z)\hat{u}_1^pdz.$$
	
	Then from (\ref{eq77}) and by choosing $\epsilon\in(0,\tau_0)$ we obtain
	\begin{equation}\label{eq78}
		\varphi(t\hat{u}_1)<0.
	\end{equation}
	
	Because $\vartheta(u)\geq\hat{\lambda}_1||u||^p_p$ for all $u\in W^{1,p}(\Omega)$ (see (\ref{eq3})) and using hypothesis $H_3(i)$ we infer that
	$$\varphi\ \mbox{is bounded below}.$$
	
	So, in conjunction with (\ref{eq78}) we have
	\begin{equation}\label{eq79}
		-\infty<m=\inf\varphi<0=\varphi(0).
	\end{equation}
	
	From hypothesis $H_3(i)$ we see that $m_0\geq 0$. Therefore Proposition \ref{prop17} implies that
	$$\varphi\ \mbox{satisfies the}\ C_m-\mbox{condition}.$$
	
	Invoking Proposition \ref{prop1}, we can find $u_0\in W^{1,p}(\Omega)$ such that
	\begin{eqnarray*}
		&&m=\varphi(u_0)<0=\varphi(0)\ (\mbox{see (\ref{eq79})}),\\
		&\Rightarrow&u_0\neq 0.
	\end{eqnarray*}
	
	Since $u_0\in K_{\varphi}$, it follows that $u_0$ is a nontrivial solution of (\ref{eq66}) and as before the nonlinear regularity theory implies that
	$$u_0\in C^1(\overline{\Omega}).$$
	
	Next we consider the following direct sum decomposition
	$$W^{1,p}(\Omega)=\RR\hat{u}_1\oplus V_p.$$
	
	If $u\in V_p$, then
	\begin{eqnarray}\label{eq80}
		\varphi(u)&\geq&\frac{1}{p}\vartheta(u)-\frac{\hat{\lambda}_1}{p}||u||^p_p-\frac{1}{p}[\hat{\lambda}(p)-\hat{\lambda_1}]||u||^p_p\nonumber\\
		&&(\mbox{see hypothesis}\ H_3(ii))\nonumber\\
		&=&\frac{1}{p}\vartheta(u)-\frac{\hat{\lambda}(p)}{p}||u||^p_p\nonumber\\
		&\geq&0\ \mbox{(see (\ref{eq6}))}\nonumber\\
		\Rightarrow&&\inf\limits_{V_p}\varphi\geq 0.
	\end{eqnarray}
	
	On the other hand, if $r\in(0,1)$ is small, then from (\ref{eq78}) and (\ref{eq76}) we see that
	\begin{equation}\label{eq81}
		\mu=\sup[\varphi(u):u\in\bar{B}_r\cap\RR\hat{u}_1]<0
	\end{equation}
	with $\bar{B}_r=\{u\in W^{1,p}(\Omega):||u||\leq r\}$. We consider the following family of maps
	\begin{equation}\label{eq82}
	\Gamma=\left\{\gamma\in C(\bar{B}_r\cap\RR\hat{u}_1,W^{1,p}(\Omega)):\gamma|_{\partial\bar{B}_r\cap\RR\hat{u}_1}=
id|_{\partial\bar{B}_r\cap\RR\hat{u}_1}\right\}.
	\end{equation}
	
	We assume that
	\begin{equation}\label{eq83}
		K_{\varphi}=\{0,u_0\}.
	\end{equation}
	
	Otherwise we already have a second nontrivial solution $\hat{u}_1$, which by the nonlinear regularity theory belongs in $C^1(\overline{\Omega})$ and so we are done.
	
	Let $b=0$ and $a=m=\varphi(u_0)$ and let $h(t,u)$ be the deformation postulated by Theorem \ref{th2} (the second deformation theorem). From (\ref{eq83}) we see that $K^b_{\varphi}=\{0\}$ and $\varphi^a=\{u_0\}$. Hence
	\begin{equation}\label{eq84}
		h(1,u)=u_0\ \mbox{for all}\ u\in V_p.
	\end{equation}
	
	Also, if $||u||=\frac{r}{2}$, then
	$$h\left(\frac{2(r-||u||)}{r},\frac{ru}{||u||}\right)=h(1,2u)=u_0\ (\mbox{since}\ 2||u||=r,\ \mbox{see (\ref{eq81}), (\ref{eq84})}).$$
	
	So, if we consider the map $\gamma_0:\bar{B}_r\cap\RR\hat{u}_1\rightarrow W^{1,p}(\Omega)$ defined by
	$$\gamma_0(u)=\left\{\begin{array}{ll}
		u_0&\mbox{if}\ ||u||<\frac{r}{2}\\
		h\left(\frac{2(r-||u||)}{r},\frac{ru}{||u||}\right)&\mbox{if}\ ||u||\geq\frac{r}{2},
	\end{array}\right.$$
	then from the previous remarks we see that $\gamma_0$ is continuous. Also, if $||u||=r$ then
	\begin{eqnarray*}
		&&h(0,u)=u,\\
		&\Rightarrow&\gamma_0|_{\partial\bar{B}_r\cap\RR\hat{u}_1}=id|_{\partial\bar{B}_r\cap\RR\hat{u}_1},\\
		&\Rightarrow&\gamma_0\in\Gamma\ (\mbox{see (\ref{eq82})}).
	\end{eqnarray*}
	
	From Theorem \ref{th2} (the second deformation theorem), we know that the homotopy $h(t,u)$ is $\varphi$-decreasing. Thus, from (\ref{eq81}) it follows that
	\begin{equation}\label{eq85}
		\varphi(\gamma_0(u))<0\ \mbox{for all}\ u\in\bar{B}_r\cap\RR u_1.
	\end{equation}
	
	From Example 5.2.3(b) of Gasinski \& Papageorgiou \cite[p. 642]{8}, we know that the sets $\partial\bar{B}_r\cap\RR \hat{u}_1$ and $V_p$ link in $W^{1,p}(\Omega)$ (see Definition \ref{def4}). Therefore we have
	\begin{eqnarray*}
		&&\gamma(\bar{B}_r\cap\RR \hat{u}_1)\cap V_p\neq\emptyset\ \mbox{for all}\ \gamma\in\Gamma,\\
		&\Rightarrow&\sup[\gamma(u):u\in \bar{B}_r\cap\RR \hat{u}_1]\geq 0\ \mbox{for all}\ \gamma\in\Gamma\ (\mbox{see (\ref{eq80})}),\\
		&\Rightarrow&\sup[\gamma_0(u):u\in \bar{B}_r\cap\RR \hat{u}_1]=\gamma_0(\tilde{u})\geq 0\ \mbox{for some}\ \tilde{u}\in \bar{B}_r\cap\RR \hat{u}_1.
	\end{eqnarray*}
	
	This contradicts (\ref{eq85}). So, we can find $\hat{u}\in K_{\varphi},\ \hat{u}\not\in\{0,u_0\}$. Then $\hat{u}$ is the second nontrivial solution of (\ref{eq66}). As before, the nonlinear regularity theory implies that $\hat{u}\in C^1(\overline{\Omega})$.
\end{proof}

We can have a three solutions theorem, if we change the geometry.

So, the new conditions on the perturbation term $g(z,x)$ are the following:

\smallskip
$H_4:$ $g:\Omega\times\RR\rightarrow\RR$ is a Carath\'eodory function such that $g(z,0)=0$ for almost all $z\in\Omega$ and
\begin{itemize}
	\item[(i)] if $G(z,x)=\int^x_0g(z,s)ds$, then there exist functions $G_{\pm}\in L^1(\Omega)$ and constants $c_-<0<c_+$ such that
	\begin{eqnarray*}
		&&0<\int_{\Omega}G_{\pm}(z)dz\leq\int_{\Omega}G(z,c_{\pm}\hat{u}_1)dz\\
		&&pG(z,x)-g(z,x)x\rightarrow G_{\pm}(z)\ \mbox{uniformly for almost all}\ z\in\Omega\ \mbox{as}\ x\rightarrow\pm\infty
	\end{eqnarray*}
	and there exist $a\in L^{\infty}(\Omega)_+$ and $1<r<p^*=\left\{\begin{array}{ll}
		\frac{Np}{N-p}&\mbox{if}\ p<N\\
		+\infty&\mbox{if}\ p\geq N
	\end{array}\right.$ such that
	$$|g(z,x)|\leq a(z)(1+|x|^{r-1})\ \mbox{for almost all}\ z\in\Omega,\ \mbox{all}\ x\in\RR;$$
	\item[(ii)] $G(z,x)\leq\frac{1}{p}[\hat{\lambda}(p)-\hat{\lambda}_1]|x|^p$ for almost all $z\in\Omega$, all $x\in\RR$;
	\item[(iii)] there exists a function $\eta\in L^{\infty}(\Omega)_+$ such that
	\begin{eqnarray*}
		&&\eta(z)\leq 0\ \mbox{for almost all}\ z\in\Omega,\ \eta\neq 0,\\
		&&\limsup\limits_{x\rightarrow 0}\frac{pG(z,x)}{|x|^p}\leq\eta(z)\ \mbox{uniformly for almost all}\ z\in\Omega.
	\end{eqnarray*}
\end{itemize}
\begin{remark}
	Again hypothesis $H_4(i)$ incorporates in our framework problems which are strongly resonant with respect to the principal eigenvalue.
\end{remark}
\begin{prop}\label{prop19}
	If hypotheses $H(\xi),H(\beta),H_4$ hold, then the functional $\varphi$ satisfies the $C_c$-condition for all $c\neq-\frac{1}{p}\int_{\Omega}G_{\pm}(z)dz$.
\end{prop}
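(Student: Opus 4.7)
The plan is to adapt the Cerami-condition argument of Proposition \ref{prop17}. The essential new difficulty is that $H_4(i)$ no longer gives $G(z,u_n)\to G_\pm(z)$ directly; instead it provides the Ahmad--Lazer--Paul type asymptotic $pG(z,x)-g(z,x)x\to G_\pm(z)$ uniformly in $z$. The basic tool will therefore be the identity
$$p\varphi(u_n)-\langle\varphi'(u_n),u_n\rangle=\int_\Omega[g(z,u_n)u_n-pG(z,u_n)]\,dz,$$
whose left side is controlled by the Cerami hypothesis. Let $\{u_n\}_{n\ge 1}\subseteq W^{1,p}(\Omega)$ be a sequence with $\varphi(u_n)\to c$ and $(1+||u_n||)\varphi'(u_n)\to 0$ in $W^{1,p}(\Omega)^*$, where $c\ne -\frac{1}{p}\int_\Omega G_\pm(z)\,dz$. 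The first step is to show $\{u_n\}$ is bounded, arguing by contradiction: if $||u_n||\to\infty$, set $y_n=u_n/||u_n||$; passing to a subsequence, $y_n\stackrel{w}{\to}y$ in $W^{1,p}(\Omega)$ and $y_n\to y$ in $L^p(\Omega)$ and $L^p(\partial\Omega)$. Using the subcritical growth $|g(z,x)|\le a(z)(1+|x|^{r-1})$ with $r<p^*$, the sequence $\{g(\cdot,u_n)/||u_n||^{p-1}\}_{n\ge 1}$ is suitably controlled; dividing the equation from $(1+||u_n||)\varphi'(u_n)\to 0$ by $||u_n||^{p-1}$, testing with $y_n-y$, and invoking Proposition \ref{prop7} yields $y_n\to y$ strongly, so $||y||=1$. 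Passing to the limit in the normalized equation and using (\ref{eq3}) forces $y=k\hat{u}_1$ with $k\ne 0$. Say $k>0$ (the case $k<0$ is symmetric); then since $\hat{u}_1\in D_+$, $u_n(z)\to+\infty$ for almost all $z\in\Omega$.

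To derive the contradiction, I pass to the limit in the displayed identity. The left side tends to $pc$ (since $|\langle\varphi'(u_n),u_n\rangle|\le||u_n||\cdot||\varphi'(u_n)||_*\to 0$), and by $H_4(i)$ the integrand converges pointwise almost everywhere to $-G_+(z)$. The uniform-in-$z$ convergence at infinity yields, for $|x|$ sufficiently large, $|pG(z,x)-g(z,x)x|\le|G_+(z)|+|G_-(z)|+1$, while for $|x|$ in a bounded range the subcritical growth and the essential boundedness of $a(\cdot)$ give a uniformly bounded contribution. Combining these provides an $L^1(\Omega)$-dominating function, and the dominated convergence theorem yields $pc=-\int_\Omega G_+(z)\,dz$, contradicting the assumption on $c$. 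The case $k<0$ similarly excludes $c=-\frac{1}{p}\int_\Omega G_-(z)\,dz$. Hence $\{u_n\}$ is bounded.

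Once boundedness is established, I extract $u_n\stackrel{w}{\to}u$ in $W^{1,p}(\Omega)$ with $u_n\to u$ in $L^p(\Omega)$ and in $L^p(\partial\Omega)$; testing $\varphi'(u_n)\to 0$ with $h=u_n-u$ and using the subcritical growth of $g$ gives $\langle A(u_n),u_n-u\rangle\to 0$, so Proposition \ref{prop7} yields $u_n\to u$ strongly in $W^{1,p}(\Omega)$. The principal obstacle is the $L^1$-domination step in the contradiction argument: the uniform (in $z$) character of the convergence in $H_4(i)$ together with $a\in L^\infty(\Omega)_+$ is precisely what allows dominated convergence to replace the Fatou-type argument used in the proof of Proposition \ref{prop17}.
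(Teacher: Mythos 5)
Your proof is correct and follows essentially the same line as the paper's: normalize $y_n = u_n/\|u_n\|$, force $y = k\hat{u}_1$ with $k\neq 0$, pass to the limit in the identity $p\varphi(u_n)-\langle\varphi'(u_n),u_n\rangle=\int_\Omega[g(z,u_n)u_n-pG(z,u_n)]\,dz$ using the Ahmad--Lazer--Paul hypothesis $H_4(i)$, and contradict the assumption $c\neq-\frac{1}{p}\int_\Omega G_\pm\,dz$. The one place you add useful detail over the paper's terse ``use hypothesis $H_4(i)$'' is the explicit $L^1$-domination argument for the limit of the integral; the paper simply adds the two inequalities (\ref{eq86})--(\ref{eq87}) and passes to the limit, but the underlying justification is the same.
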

\begin{proof}
	Consider a Cerami sequence $\{u_n\}_{n\geq 1}\subseteq W^{1,p}(\Omega)$ (that is, the sequence satisfies (\ref{eq67}) and (\ref{eq68})). We show that $\{u_n\}_{n\geq 1}\subseteq W^{1,p}(\Omega)$ is bounded. Arguing by contradiction, assume that $||u_n||\rightarrow\infty$ and let $y_n=\frac{u_n}{||u_n||},\ n\in\NN$. Since $||y_n||=1$ for all $n\in\NN$, we may assume that
	$$y_n\stackrel{w}{\rightarrow}y\ \mbox{in}\ W^{1,p}(\Omega)\ \mbox{and}\ y_n\rightarrow y\ \mbox{in}\ L^p(\Omega)\ \mbox{and in}\ L^p(\partial\Omega).$$
	
	Reasoning as in the proof of Proposition \ref{prop17}, we show that $y=k\hat{u}_1$, with $k\neq 0$. To fix things we assume that $k>0$ and so $u_n(z)\rightarrow+\infty$ for almost all $z\in\Omega$.
	
	From (\ref{eq67}) and (\ref{eq68}) we have
	\begin{eqnarray}\label{eq86}
		&&p(c-\epsilon)\leq p\varphi(u_n)\leq p(c+\epsilon)\ \mbox{for all}\ n\geq n_0,\\
		&&|\left\langle \varphi'(u_n),h\right\rangle|\leq\frac{\epsilon_n||h||}{1+||u_n||}\ \mbox{for all}\ h\in W^{1,p}(\Omega)\ \mbox{with}\ \epsilon_n\rightarrow 0^+.\nonumber
	\end{eqnarray}
	
	Choosing $h=u_n\in W^{1,p}(\Omega)$ we obtain
	\begin{equation}\label{eq87}
		-\epsilon_n\leq-\left\langle \varphi'(u_n),u_n\right\rangle\leq\epsilon_n\ \mbox{for all}\ n\in\NN.
	\end{equation}
	
	Adding (\ref{eq86}) and (\ref{eq87}), we obtain
	$$p(c-\epsilon)-\epsilon_n\leq\int_{\Omega}[g(z,u_n)u_n-pG(z,u_n)]dz\leq p(c+\epsilon)+\epsilon_n\ \mbox{for all}\ n\geq n_0.$$
	
	Recalling that $u_n(z)\rightarrow+\infty$ for almost all $z\in\Omega$ and that $\epsilon>0$ is arbitrary, if we pass to the limit as $n\rightarrow\infty$ and use hypothesis $H_4(i)$, we obtain
	$$c=-\frac{1}{p}\int_{\Omega}G_+(z)dz,$$
	a contradiction to our assumption on the level $c$.
\end{proof}

We consider the direct sum decomposition
$$W^{1,p}(\Omega)=\RR\hat{u}_1\oplus V_p$$
and we introduce the following two open subsets of $W^{1,p}(\Omega)$
$$U_+=\{t\hat{u}_1+v:t>0,v\in V_p\}\ \mbox{and}\ U_-=\{t\hat{u}_1+v:t<0,v\in V_p\}.$$

Note that
\begin{equation}\label{eq88}
	\inf\limits_{\bar{U}_{\pm}}\varphi\leq\varphi(c_{\pm}\hat{u}_1)=
-\int_{\Omega}G(z,c_{\pm}\hat{u}_1)dz\leq-\int_{\Omega}G_{\pm}(z)dz<0.
\end{equation}

Moreover, as in the proof of Theorem \ref{th18}, using hypothesis $H_4(iii)$, we have that
\begin{equation}\label{eq89}
	\inf\limits_{V_p}\varphi=0.
\end{equation}

These observations will help us to prove the existence of three nontrivial smooth solutions.
\begin{theorem}\label{th20}
	If hypotheses $H(\xi),H(\beta),H_4$ hold, then problem (\ref{eq66}) has at least three nontrivial solutions
\end{theorem}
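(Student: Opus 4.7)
The plan is to exhibit three nontrivial critical points of $\varphi$ by producing two local minima $u_{\pm}$ of $\varphi$, one in each of the disjoint open sets $U_{\pm}$, and a third critical point $\hat{u}$ via a mountain-pass argument joining $u_-$ and $u_+$. The partial Cerami condition from Proposition \ref{prop19} forces us to verify, at every compactness step, that the relevant level strictly avoids the exceptional values $-\frac{1}{p}\int_{\Omega}G_{\pm}(z)dz$.

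First, I would show that $u=0$ is a local $W^{1,p}(\Omega)$-minimizer of $\varphi$ with $C_k(\varphi,0)=\delta_{k,0}\ZZ$. By hypothesis $H_4(iii)$, for every $\epsilon>0$ there is $\delta>0$ such that $G(z,x)\leq\frac{1}{p}(\eta(z)+\epsilon)|x|^p$ whenever $|x|\leq\delta$; combining this with Lemma \ref{lem15} applied to $\tilde{\eta}(z):=\hat{\lambda}_1+\eta(z)\leq\hat{\lambda}_1$ (with $\tilde{\eta}\not\equiv\hat{\lambda}_1$ thanks to $\eta\not\equiv 0$) gives $\varphi(u)\geq c_7||u||^p$ on some $C^1(\overline{\Omega})$-neighborhood of $0$, and Proposition \ref{prop6} upgrades this to a local $W^{1,p}(\Omega)$-minimum.

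Next, set $m_{\pm}:=\inf_{\overline{U}_{\pm}}\varphi$. From (\ref{eq88}) together with $\int_{\Omega}G_{\pm}(z)dz>0$ (hypothesis $H_4(i)$) and $p>1$, one gets $m_{\pm}\leq-\int_{\Omega}G_{\pm}(z)dz<-\frac{1}{p}\int_{\Omega}G_{\pm}(z)dz$, so Proposition \ref{prop19} delivers the $C_{m_{\pm}}$-condition. Ekeland's variational principle applied to a minimizing sequence in the closed set $\overline{U}_{\pm}$ produces a Cerami sequence at level $m_{\pm}$, and the $C_{m_{\pm}}$-condition extracts a convergent subsequence with limit $u_{\pm}\in\overline{U}_{\pm}$ realizing $m_{\pm}<0$. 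Since $\inf_{V_p}\varphi=0$ by (\ref{eq89}), the minimum cannot lie on $V_p=\partial U_{\pm}$; hence $u_{\pm}\in U_{\pm}$, each $u_{\pm}$ is a local minimizer of $\varphi$ (in particular a nontrivial solution of (\ref{eq66})) with $C_k(\varphi,u_{\pm})=\delta_{k,0}\ZZ$, and nonlinear regularity gives $u_{\pm}\in C^1(\overline{\Omega})$.

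For the third solution, set $\Gamma:=\{\gamma\in C([0,1],W^{1,p}(\Omega)):\gamma(0)=u_-,\ \gamma(1)=u_+\}$ and $c:=\inf_{\gamma\in\Gamma}\max_{t\in[0,1]}\varphi(\gamma(t))$. The continuous linear functional $u\mapsto\int_{\Omega}\hat{u}_1^{p-1}u\,dz$ is negative on $U_-$ and positive on $U_+$, so by the intermediate value theorem every $\gamma\in\Gamma$ must cross $V_p$; hence $c\geq\inf_{V_p}\varphi=0>\max\{\varphi(u_-),\varphi(u_+)\}$. Because $c\geq 0>-\frac{1}{p}\int_{\Omega}G_{\pm}(z)dz$, the $C_c$-condition holds by Proposition \ref{prop19}. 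The mountain-pass theorem (Theorem \ref{th5}), combined with the standard characterization of critical groups at mountain-pass points (Motreanu, Motreanu \& Papageorgiou \cite{14}), produces $\hat{u}\in K^c_{\varphi}$ with $C_1(\varphi,\hat{u})\neq 0$; since $C_1(\varphi,0)=0$ and $C_1(\varphi,u_{\pm})=0$, necessarily $\hat{u}\notin\{0,u_-,u_+\}$, and regularity yields $\hat{u}\in C^1(\overline{\Omega})$. The chief technical obstacle is the one flagged at the outset: keeping all the levels $m_{\pm}$ and $c$ strictly away from the two bad values $-\frac{1}{p}\int_{\Omega}G_{\pm}(z)dz$ at which compactness may fail, together with the critical-group separation that is needed to distinguish $\hat{u}$ from $0$ in the limiting case $c=0$.
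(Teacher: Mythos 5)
Your proposal follows essentially the same route as the paper: truncate $\varphi$ to $\overline{U}_{\pm}$, extract interior minimizers via the (extended) Ekeland variational principle together with the partial Cerami condition, run a mountain-pass argument whose connecting paths are forced to cross $V_p$ (so $c\geq 0$), and then use critical groups to separate the mountain-pass point from $0$ and from the two minimizers. One small refinement is worth flagging: to invoke Proposition \ref{prop19} at level $m_{\pm}$ you must rule out \emph{both} exceptional values, whereas your inequality $m_{\pm}\leq-\int_{\Omega}G_{\pm}\,dz<-\frac{1}{p}\int_{\Omega}G_{\pm}\,dz$ only excludes the matching one; the clean fix (implicit in the paper) is to note that the Ekeland sequence lies in $\overline{U}_{\pm}$, so its normalized blow-up limit has $\hat{u}_1$-component of the corresponding sign, and hence in the argument of Proposition \ref{prop19} only the value $-\frac{1}{p}\int_{\Omega}G_{\pm}\,dz$ associated to that sign can obstruct compactness.
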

$$\hat{u}_+,\ \hat{u}_-,\ \hat{y}\in C^1(\overline{\Omega}).$$
\begin{proof}
	Let $\varphi_+:W^{1,p}(\Omega)\rightarrow\overline{\RR}=\RR\cup\{+\infty\}$ be the lower semicontinuous and bounded below functional defined by
	\begin{equation*}
		\varphi_+(u)=\left\{\begin{array}{ll}
			\varphi(u) 	& \mbox{if}\ u\in\overline{U}_+ \\
			+\infty		& \mbox{otherwise}
		\end{array}\right.\ \mbox{(see hypothesis}\ H_4(i))
	\end{equation*}
	
	Invoking the extended Ekeland variational principle (see, for example, Gasinski \& Papageorgiou \cite[p. 598]{8}), we can find $\{u_n\}_{n\geq 1}\subseteq U_+$ such that
	\begin{eqnarray}
		\varphi(u_n)=\varphi_+(u_n)\downarrow\ \inf\varphi_+\ \mbox{(recall}\ \varphi_+\ \mbox{is bounded below)}\label{eq90},\\
		\varphi(u_n)=\varphi_+(u_n)\leq\varphi_+(y)+\frac{1}{n(1+||u_n||)}||y-u_n||\ \mbox{for all}\ y\in W^{1,p}(\Omega).\label{eq91}
	\end{eqnarray}
	
	Since $u_n\in U_+$ for $h\in W^{1,p}(\Omega)$ and $\lambda\in(0,1)$ is small we have
	$$u_n+\lambda h\in U_+.$$
	
	Because $\varphi_+|_{U_+}=\varphi|_{U_+}$, from (\ref{eq91}) with $y=u_n+\lambda h$, we have
	\begin{eqnarray*}
		&-&\frac{||h||}{n(1+||u_n||)}\leq\frac{\varphi(u_n+\lambda h)-\varphi(u_n)}{\lambda},\\
		\Rightarrow & - & \frac{||h||}{n(1+||u_n||)}\leq\left\langle \varphi'(u_n),h\right\rangle\,.
	\end{eqnarray*}
	
	From Lemma 5.1.38 of Gasinski \& Papageorgiou \cite[p. 639]{8}, we know that we can find $u^*_n\in W^{1,p}(\Omega)^*$ with $||u^*_n||_*\leq 1$ such that
	\begin{eqnarray}
		&	&\left\langle u^*_n,h\right\rangle\leq n(1+||u_n||)\langle\varphi'(u_n),h\rangle\ \mbox{for all}\ h\in W^{1,p}(\Omega) \nonumber \\
		& \Rightarrow & (1+||u_n||)\varphi'(u_n)\rightarrow 0\ \in W^{1,p}(\Omega)^* \nonumber \\
		& \Rightarrow & u_n\rightarrow \hat{u}_+\in W^{1,p}(\Omega)\ \mbox{(see (\ref{eq86}) and Theorem \ref{th18}),} \nonumber \\
		& \Rightarrow & \varphi(\hat{u}_+)=\inf\limits_{\overline{U}_+}\varphi=\inf\varphi_+<0\ \mbox{(see (\ref{eq88}))}. \label{eq92}
	\end{eqnarray}
	
	If $\hat{u}_+\in\partial U_+$, then $\hat{u}_+\in V_p$ and so
	$$\varphi(\hat{u}_+)\geq0\ \mbox{(see (\ref{eq89})).}$$
	
	This contradicts (\ref{eq92}). Therefore $\hat{u}_+\in U_+$ and this means that $\hat{u}_+$ is a local minimizer of $\varphi$. Moreover, the nonlinear regularity theory implies that $\hat{u}_+\in C^1(\overline{\Omega})$.
	
	Similarly using the lower semicontinuous and bounded below functional
	\begin{eqnarray*}
		\varphi_-(u)=\left\{\begin{array}{ll}
			\varphi(u) 	& \mbox{if}\ u\in\overline{U}_- \\
			+\infty		& \mbox{otherwise},
		\end{array}\right.
	\end{eqnarray*}
	we produce $\hat{u}_-\in C^1(\overline{\Omega})$ a second nontrivial smooth solution of (\ref{eq66}), which is also a local minimizer of $\varphi$.
	
	Without any loss of generality we may assume that
	$$\varphi(\hat{u}_-)\leq\varphi(\hat{u}_+)$$
	(the reasoning is similar if the opposite inequality holds). We assume that $K_\varphi$ is finite (otherwise we already have an infinity of nontrivial solutions of (\ref{eq66})) all of them in $C^1(\overline{\Omega})$ by the nonlinear regularity theory). Since $\hat{u}_+$ is a local minimizer of $\varphi$, we can find $\rho\in(0,1)$ small such that
	\begin{eqnarray}\label{eq93}
		\varphi(\hat{u}_-)\leq\varphi(\hat{u}_+)<\inf\left[\varphi(u):||u-\hat{u}_+||=\rho\right]=m^+_\rho\ ||\hat{u}_--\hat{u}_+||>\rho\\\
		\mbox{(see \cite{1}, proof of Proposition 29)}. \nonumber
	\end{eqnarray}
	
	Let $\Gamma=\{\gamma\in C([0,1],W^{1,p}(\Omega)):\gamma(0)=\hat{u}_-,\gamma(1)=\hat{u}_+\}$ and define
	$$c=\inf\limits_{\gamma\in\Gamma}\max\limits_{0\leq t\leq1}\varphi(\gamma(t)).$$
	
	Since $\hat{u}_+\in U_+$ and $\hat{u}_-\in U_-$, from (\ref{eq89}) we see that
	\begin{eqnarray}\label{eq94}
		& &c\geq0 \nonumber \\
		& \Rightarrow & \varphi\ \mbox{satisfies the}\ C_c-\mbox{condition} \\
		& &\mbox{(see Theorem \ref{th18} and hypothesis }H_1(i)). \nonumber
	\end{eqnarray}
	Then (\ref{eq93}) and (\ref{eq94}) permit the use of Theorem \ref{th5} (the mountain pass theorem). So, we can find $\hat{y}\in W^{1,p}(\Omega)$ such that
	\begin{eqnarray*}
		&	& \hat{y}\in K_\varphi\ \mbox{and}\ m^+_\rho\leq\varphi(\hat{y}) \\
		& \Rightarrow & \hat{y}\in C^1(\overline{\Omega})\ \mbox{is a solution of (\ref{eq66}) and}\ \hat{y}\notin\{\hat{u}_+,\hat{u}_-\}\ \mbox{(see (\ref{eq93}))}.
	\end{eqnarray*}
	
	Since $\hat{y}$ is a critical point of $\varphi$ of mountain pass type, we have
	\begin{equation}\label{eq95}
		C_1(\varphi,\hat{y})\neq0
	\end{equation}
	(see Motreanu, Motreanu \& Papageorgiou \cite[p. 168]{14}).
	
	On the other hand, from hypothesis $H_4$(iii) we see that given $\epsilon>0$, we can find $\delta=\delta(\epsilon)>0$ such that
	\begin{equation}\label{eq96}
		G(z,x)\leq\frac{1}{p}(\eta(z)+\epsilon)|x|^p\ \mbox{for almost all}\ z\in\Omega,\ \mbox{all}\ |x|\leq\delta.
	\end{equation}
	Then for $u\in C^1(\overline{\Omega})$ with $||u||_{C^1(\overline{\Omega})}\leq\delta$, we have
	\begin{eqnarray}
		\varphi(u) 	& = 		& \frac{1}{p}\vartheta(u)-\frac{\hat{\lambda}_1}{p}\,||u||^p_p-\int_{\Omega}G(z,u)dz \nonumber \\
					& \geq 	& \frac{1}{p}\left[ \vartheta(u)-\int_\Omega [\hat{\lambda}_1+\eta(z)] |u|^pdz-\epsilon||u||^p\right]\ \mbox{(see (\ref{eq96}))} \nonumber \\
					& \geq	& (c_8-\epsilon)||u||^p\ \mbox{for some}\ c_8>0\ \mbox{(see Lemma \ref{lem15})}. \label{eq97}
	\end{eqnarray}
	
	Choosing $\epsilon\in(0,c_8)$, from (\ref{eq97}) we see that
	\begin{eqnarray}
		&	& u=0\ \mbox{is a local}\ C^1(\overline{\Omega})-\mbox{minimizer of}\ \varphi, \nonumber \\
		& \Rightarrow & u=0\ \mbox{is a local}\ W^{1,p}(\Omega)-\mbox{minimizer of}\ \varphi\ \mbox{(see Proposition \ref{prop6})} \nonumber \\
		& \Rightarrow & C_k(\varphi,0)=\delta_{k,0}\ZZ\ \mbox{for all}\ k\in\NN_0. \label{eq98}
	\end{eqnarray}
	
	Comparing (\ref{eq95}) and (\ref{eq98}), we infer that
	\begin{eqnarray*}
		&	& \hat{y}\neq0, \\
		& \Rightarrow & \hat{y}\in C^1(\overline{\Omega})\ \mbox{is the third nontrivial smooth solution of (\ref{eq66})}
	\end{eqnarray*}
 \end{proof}

\begin{remark}\label{rem20}
It is interesting to know if, at least in the semilinear case $p=2$, we can improve the above theorem and produce a fourth nontrivial solution. The failure of the compactness condition at certain levels (see Proposition \ref{prop19}) does not allow us to compute the critical groups at infinity (see Bartsch \& Li \cite{bart}) and therefore we cannot use the Morse relation (see \eqref{eq5}). So, a different approach is needed.
\end{remark}

 \medskip
{\bf Acknowledgements.} The authors wish to thank three anonymous referees for their corrections and remarks, which improved the paper.
This work was partially supported by the Slovenian Research Agency grants P1-0292, J1-8131, J1-7025 and J1-6721.
V.D.~R\u{a}dulescu was supported by a grant of the Romanian National Authority for Scientific Research and Innovation, CNCS-UEFISCDI, project number PN-III-P4-ID-PCE-2016-0130.

\end{document}